\newtheorem{theorem}[equation]{Theorem}
\newtheorem{lemma}[equation]{Lemma}
\newtheorem{corollary}[equation]{Corollary}
\newtheorem{proposition}[equation]{Proposition}
\newtheorem{definition}[equation]{Definition}
\newtheorem{conjecture}[equation]{Conjecture}
\newtheorem{remark}[equation]{Remark}
\newtheorem{example}[equation]{Example}
\newtheorem{question}[equation]{Question}
\numberwithin{equation}{section}
\renewcommand{\thesection}{\arabic{section}}
\renewcommand{\theequation}{\thesection.\arabic{equation}}
\newcounter{subeq}
\numberwithin{subeq}{equation}
\renewcommand{\thesubeq}{\theequation.\arabic{subeq}}
\newenvironment{pf}
{\noindent\textbf{Proof: }}
{\mbox{}\hfill\textbf{ Q.E.D.}\medskip}
\def\calI{\mathcal{I}}
\def\calO{\mathcal{O}}
\def\calP{\mathcal{P}}
\def\bbP{\mathbb{P}}
\def\bbR{\mathbb{R}}
\def\bbX{\mathbb{X}}
\def\mod{\text{mod}\;}
\begin{document}
\title{On $(i)$-Curves in Blowups of $\bbP^r$}


\author{
Olivia Dumitrescu \thanks{The first author was supported by the NSF-FRG grant DMS 2152130, NSF grant DMS1802082, and the Simons Foundation collaboration grant 855897.} \\
\small University of North Carolina at Chapel Hill \\ \small Chapel Hill, NC 27599-3250 \\
\and
Rick Miranda \\
\small Colorado State University \\ \small Fort Collins, CO 80523 USA}

\maketitle

\begin{abstract}
In this paper we study  $(i)$-curves with $i\in \{-1, 0, 1\}$ in the blown up projective space $\mathbb{P}^r$ in general points.
The notion of $(-1)$-curves was analyzed in the early days of mirror symmetry by Kontsevich
with the motivation of counting curves on a Calabi-Yau threefold.
In dimension two, Nagata studied planar $(-1)$-curves
in order to construct counterexample to Hilbert's 14th problem.

We introduce the notion of classes of $(0)$- and $(1)$-curves
in $\mathbb{P}^r$ with $s$ points blown up
and we prove that their number is finite if and only if the space is a Mori Dream Space.
We further introduce a bilinear form on a space of curves,
and a unique symmetric Weyl-invariant class, $F$,
(that we will refer to as the \emph{anticanonical curve class}).
For Mori Dream Spaces we prove that $(-1)$-curves can be defined arithmetically
by the linear and quadratic invariants determined by the bilinear form.
Moreover, $(0)$- and $(1)$-Weyl lines give the extremal rays
for the cone of movable curves in $\mathbb{P}^r$ with $r+3$ points blown up.
As an application, we use the technique of movable curves to reprove that
if $F^2\leq 0$ then $Y$ is not a Mori Dream Space
and we propose to apply this technique to other spaces.
\end{abstract}
\tableofcontents

\section{Introduction}
\subsection{Historical Background.}
The concept of \emph{ $(-1)$-curves} on a complex threefold
was introduced and studied by
Clemens \cite{C},
Friedman \cite[Section 8]{F}
and Kontsevich \cite[Section 1.4]{maxim1} and \cite[Section 2.3]{maxim2}
as a \emph{smooth rational curve with normal bundle isomorphic to $\calO(-1) \oplus \calO(-1)$}.
The interplay between mathematics and physics,
in the early days of Gromov-Witten theory,
and the role of $(-1)$-curves on a $3$-dimensional Calabi Yau 
is largely exposed in \cite{CK}.
The connection to enumerative geometry started from the influential paper \cite{Cd}
via the count of rational curves on a quintic threefold
by solving Picard-Fuchs equations on its mirror pair.
While developing the theory of mirror symmetry,
in \cite{maxim1} Kontsevich predicts that there are infinitely many $(-1)$-curves
on a Calabi-Yau threefold with prescribed numbers in each degree.

In this paper we study a natural generalization of this concept
to higher-dimensional varieties,
and to other normal bundles;
in particular we make the following definition:
\begin{definition}\label{i-curves}
Fix $i\in \{-1, 0, 1\}$.
An \emph{$(i)$-curve} on a smooth $r$-dimensional variety $X$
is a smooth irreducible rational curve
with normal bundle isomorphic to $\calO(i)^{\oplus(r-1)}$.
\end{definition}

We focus on the case of $Y^r_s$
which is the blowup of $\bbP^r$ at $s$ general points,
which are referred to as the \emph{base points}.
For this we are motivated by understanding the structure of the set of classes
of $(i)$-curves in the Chow ring,
in hopes of obtaining numerical criteria.
In the two-dimensional case,
this is closely related to the finite generation for the Cox ring of $Y^2_s$,
whose study was initiated by Nagata 
(\cite{nagatab} and \cite{nagata})
to understand the finite generation of rings of invariants and Hilbert's 14th problem. 
Nagata's counterexample directly used the infinity of $(-1)$-curves and their classes
to prove that for $s=16$ (later shown for $s \geq 9$) the corresponding Cox ring (isomorphic to the relevant ring of invariants) was not finitely generated,
see \cite[Theorem 2a]{nagata1}; 
his pioneering work contributed to the development of birational geometry.
Nagata's correspondence between {\it planar $(-1)$-curves and $(-1)$-Weyl lines} plays a key role there, and also in this work.

This theme of research continued in Mori's work on establishing
the \emph{Minimal Model Program},
and led to the identification of \emph{Mori Dream Spaces}:
those for which the Cox ring is finitely generated.

\subsection{Main Results.}



This paper is organized as follows.

In Section \ref{ChowRingY}
we present the main elements needed
from the Chow ring $A^*(Y^r_s)$ and its intersection form $(-,-)$.
In Section \ref{standardCremona} we give a straightforward description
of the action of the standard Cremona transformations
on curve classes $A^{r-1}(Y^r_s)$
in Proposition \eqref{CremonaAY}.
Finally in Section \ref{WeylGroupSubsection}
we present the Coxeter group theory that results in the basic
analysis of the Weyl group.

This leads us to define the bilinear form 
$\langle -,-\rangle$ on the curve classes $A^{r-1}(Y^r_s)$
and an anticanonical curve class $F \in A^{r-1}$
(meant to be dual in some sense to the anticanonical class $-K_Y \in A^1$)
which plays a central role in the analysis.
We denote the bilinear form induced by the Coxeter group theory on $A^1(Y^r_s)$
by $\langle -,-\rangle_1$
(known as the Dolgachev-Mukai pairing) and we exploit both forms systematically.
(For more details, see \cite{DM2}.)

In Section \ref{gen} we study general properties of $(i)$-curves
on an arbitrary smooth variety of dimension $r$,
before returning to focus on the $Y^r_s$ case.
We introduce the concept of \emph{$(i)$-Weyl lines}
and their classes to denote curves and classes
in the Weyl orbit of the proper transform 
of a line in $\bbP^r$ through $1-i$ of the base points.
We make corresponding definitions of $(i)$-Weyl hyperplanes
and observe numerical properties for their classes as well;
Corollary \eqref{div} of Section \ref{SectionBasicsCurves} proves that
$(i)$-Weyl hyperplanes and $(i)$-divisorial classes are equivalent in $\bbP^r$;
this remark extends Nagata's correspondence for divisors
from $i=-1$ \cite{DP} to $i=1$.
Some care must be taken with respect to
an assumption of irreducibility for the curves in question;
Example \eqref{exp} emphasizes this in the planar case.

In Section \ref{-1curvesMDS} we study numerical conditions that
provide a useful tool to prove the finiteness of $(i)$-curves.
In Section \ref{infinite}, we use the computation of Weyl actions on curves
to prove that the space $Y^r_s$ is a Mori Dream space if and only if 
it has infinitely many classes of $(1)$-curves.
Moreover, there are infinitely many $(-1)$-curves
if the number of points is at least $r+5$.
We conclude with applications of the theory of rigid and movable curves.
More precisely, we identify extremal rays for the movable cone of curves,
by exploiting the faces of the effective cone of divisors for Mori Dream Spaces in Section \ref{apl}. 

We close this section with a summary of what we believe are the most important results of the article.

The classification of the Mori Dream Spaces among the spaces $Y^r_s$
is conveniently expressed in terms of the bilinear forms $\langle -,-\rangle_1$ on $A^1$
and $\langle -,-\rangle$ on $A^{r-1}$
coming from the Coxeter group approach.
The existence of the bilinear form $\langle -,-\rangle$ on the curve classes
gives us a linear invariant $\langle c,F\rangle$
(which equals $(c \cdot -K_Y)$)
and a quadratic invariant $\langle c,c\rangle$
for curve classes $c$.
Although the curve class cannot detect the decomposition of the normal bundle
of a smooth rational curve, the linear invariant does detect its anticanonical degree,
and we say that a class $c$ is a \emph{numerical $(i)$-class}
if $\langle c,F\rangle =  2+i(r-1)$,
which is the value for the class of an $(i)$-curve.
This linear invariant is related to the virtual dimension
(namely $\chi$ of the normal bundle) of
a curve $C$ in $Y^r_s$, see (\ref{param}).

\begin{theorem} If $Y=Y^r_s$, then the following statements are equivalent:
\begin{enumerate}
\item $Y$ is a Mori Dream space.
\item The Coxeter group (and the Weyl group) is finite.
\item $F^2:=\langle F, F \rangle >0$
(which is identical to $\langle K_Y, K_Y \rangle_1 >0$ and also $(-K_Y \cdot F) > 0$).
(This is equivalent to $r=2, s \leq 8; \text{ or }r=3 \text{ or }4, s \leq r+4; \text{ or } r\geq 5, s \leq r+3$.)
\item $Y$ has finitely many $(0)$-curves (or equivalently, $(0)$-Weyl lines or $(0)$-numerical classes).
\item $Y$ has finitely many $(1)$-curves (or equivalently, $(1)$-Weyl lines or $(1)$-numerical classes).
\end{enumerate}
\end{theorem}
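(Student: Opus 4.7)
The plan is to establish the five equivalences via a hub-and-spoke argument centered on condition (2), the finiteness of the Weyl group $W$ acting on $A^1(Y)$. The equivalence (1)$\iff$(2) is the classical characterization of Mori Dream Spaces among blowups of projective space at general points via finiteness of the associated Weyl/Coxeter group (due to Mukai and refined by Castravet--Tevelev). This single strong input is what reduces the MDS question to a purely combinatorial question about the Coxeter group developed in Section \ref{WeylGroupSubsection}, and everything else in the theorem is set up to be equivalent to this Coxeter-theoretic condition.

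For (2)$\iff$(3), the relevant fact from Coxeter theory is that a Coxeter group is finite precisely when its defining bilinear form is positive definite. In our setting $W$ preserves both $\langle -,-\rangle_1$ on $A^1(Y)$ (with Weyl-invariant vector $K_Y$) and $\langle -,-\rangle$ on $A^{r-1}(Y)$ (with Weyl-invariant vector $F$), and acts as the Coxeter group on the root span, namely the $\langle -,-\rangle_1$-orthogonal complement of $K_Y$ and the $\langle -,-\rangle$-orthogonal complement of $F$. Positive definiteness of the restriction to the root span is equivalent to the discriminant-like quantity $\langle K_Y, K_Y\rangle_1$ being positive; the duality between the two pairings developed in the bilinear-form section then translates this simultaneously to $\langle F,F\rangle >0$ and to $(-K_Y \cdot F) > 0$ by direct computation using the explicit formulas for $F$ and $K_Y$ in the standard basis.

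For (2)$\Rightarrow$(4) and (2)$\Rightarrow$(5) the implication is immediate: if $W$ is finite, every $W$-orbit is finite, in particular the orbit of the class of a line through $1-i$ points, so there are only finitely many $(i)$-Weyl lines; by the equivalence of $(i)$-Weyl lines with $(i)$-numerical classes, proved in Section \ref{SectionBasicsCurves} and extended to curves in Section \ref{-1curvesMDS}, the other two formulations in (4) and (5) follow. For the converses (4)$\Rightarrow$(2) and (5)$\Rightarrow$(2), I would argue by contrapositive: assuming $W$ infinite, exhibit an infinite family of distinct $(i)$-Weyl-line classes by iterating standard Cremona reflections (Proposition \ref{CremonaAY}) on a fixed $(i)$-line, the point being that the stabilizer of a line class in $W$ is a finite parabolic subgroup, so an infinite $W$ yields infinitely many orbit representatives, hence infinitely many numerical $(i)$-classes.

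The main obstacle in my view is reconciling the three notions packaged into conditions (4) and (5) -- geometric $(i)$-curves (smooth irreducible rational with prescribed normal bundle), combinatorial $(i)$-Weyl lines, and $(i)$-numerical classes -- because Example \ref{exp} already shows in the planar case that irreducibility can fail and numerical classes can fail to be realized by genuine smooth rational curves. The cleanest route is therefore to work primarily at the numerical and Weyl-orbit levels throughout the chain of equivalences, and to appeal to the geometric realization (passage to actual $(i)$-curves) only under the MDS hypothesis, where Cox-ring finite generation guarantees that the Weyl-orbit classes are realized by honest smooth rational curves with the expected normal bundle splitting.
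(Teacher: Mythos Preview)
Your hub-and-spoke plan through (2) matches the paper for (1)$\iff$(2)$\iff$(3), and your treatment of those via Coxeter theory and the sign of $\langle F,F\rangle$ is essentially Corollary \ref{mds}. The substantive gap is in the contrapositive direction (4)$\Rightarrow$(2) (and likewise (5)$\Rightarrow$(2)), where you assert that the stabilizer in $W$ of a line class is a \emph{finite} parabolic subgroup. This is false for the $(0)$-line. One checks from Proposition \ref{CremonaAY}(a) that $\phi(h-e_1)=h-e_1$, so the simple reflections fixing $h-e_1$ are $\phi$ together with $\sigma_2,\ldots,\sigma_{s-1}$; the standard parabolic they generate is the Coxeter group obtained by deleting vertex $1$ from the $T$-diagram, and already for $Y^5_{10}$ this is the group of type $T_{2,5,4}$, which is infinite since $\tfrac12+\tfrac15+\tfrac14<1$. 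Thus the orbit-stabilizer shortcut does not go through as written. You could try to repair it by arguing that every proper standard parabolic of an infinite irreducible Coxeter group has infinite index, but that needs a separate argument, and you would also have to show the stabilizer \emph{equals} this parabolic (which requires knowing the class lies in the closed fundamental chamber). The paper sidesteps all of this with the explicit recursion of Lemma \ref{recursionCremona}: a carefully chosen inequality on the degree and multiplicities is shown to be preserved by the Cremona transformation based at the $r+1$ smallest multiplicities while the degree strictly increases, and iterating from $h$ and from $h-e_1$ yields the infinitely many $(1)$- and $(0)$-Weyl line classes of Corollary \ref{infinite01rgeq5}.

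There is also a smaller imprecision in your forward direction (2)$\Rightarrow$(4),(5). Finiteness of $W$ immediately gives finitely many $(i)$-Weyl \emph{lines}, but to pass to finitely many $(i)$-\emph{curves} (or numerical $(i)$-classes realized by irreducible curves) you invoke an ``equivalence of $(i)$-Weyl lines with $(i)$-numerical classes'' that is only established in Section \ref{-1curvesMDS} under the MDS hypothesis itself, so as stated the appeal is circular. The paper handles this via Proposition \ref{fmds}, which bounds the degree of a Cremona-reduced numerical $(i)$-class case by case using Lemma \ref{CRnumjclass} and only then invokes the finiteness of $W$.
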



Corollary \ref{mds} implies the equivalence of the first three statements,
while Theorem \ref{finiteclass} and Corollary \ref{infinite01rgeq5}
implies part $(4)$ and $(5)$.

We conjecture (Conjecture \ref{HardConjecture}) that 
every $(i)$-Weyl line is an $(i)$-curve,
and it's easy to see that the class of an $(i)$-curve is a numerical $(i)$-class.
It is natural to ask: when are these notions identical?
They are not equivalent in arbitrary dimension.
We do prove Conjecture \ref{HardConjecture}
in certain cases (for $i = 0, 1$ and for $i=-1$ if $s \leq r+4$).

Example \eqref{F class in 3} gives a $(-1)$-curve
that is not a $(-1)$-Weyl line in $Y^3_8$
(the first case when the space $Y$ is not a Mori Dream Space).
The numerical $(-1)$-classes can also contain classes of curves of different genera.
Indeed, the same example of the $F$ class in $Y^3_8$
is the class of four $(-1)$-curves (four disjoint lines each through two of the $8$ points),
but also the class of an elliptic curve that is a complete intersection
of two quadrics in $\bbP^3$.
It is however also the class of a $(-1)$-curve
(indeed, four disjoint ones,
none of which are $(-1)$-Weyl lines).
It is therefore remarkable that for Mori Dream Spaces,
$(-1)$-numerical classes represent just one curve in $Y^r_s$,
which is a $(-1)$ curve.

The main results of Section \ref{-1curvesMDS} involve characterizing the $(i)$-curves
and $(i)$-Weyl lines using the numerical invariants; we have the following when $i=-1$.

\begin{theorem}\label{class}
Assume $r \geq 3$.  
Suppose that $C$ is a curve in $Y$ with class $c\in A^{r-1}(Y)$.
If $Y$ is a Mori Dream Space or $Y=Y^5_9$,
then the following are equivalent:
\begin{enumerate}
\item $C$ is an $(-1)$-curve.
\item $C$ is a $(-1)$-Weyl line.
\item $\langle c, c \rangle=3-2r$ and $\langle c, F \rangle = 3-r$.
\item $c = (1;1^2)$ or $c=(r,1^{r+3})$.
\end{enumerate}
\end{theorem}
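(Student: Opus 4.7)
The plan is to establish the equivalence as a cyclic chain $(4)\Rightarrow(2)\Rightarrow(1)\Rightarrow(3)\Rightarrow(4)$, where the first three implications are essentially structural and the last, a Diophantine classification, is the principal task. For $(4)\Rightarrow(2)$, the class $(1;1^2)$ is by definition represented by a line through two of the blown-up points and is a $(-1)$-Weyl line; for $(r;1^{r+3})$, which requires $s\geq r+3$, one verifies by direct computation that applying a single standard Cremona transformation centered at $r+1$ of the $r+3$ marked points carries this class into the orbit of $(1;1^2)$, so it too is a $(-1)$-Weyl line. For $(2)\Rightarrow(1)$, the proper transform on $Y$ of a line through two general points has normal bundle $\calO(-1)^{\oplus(r-1)}$ (the Euler sequence gives $\calO(1)^{\oplus(r-1)}$ on $\bbP^r$, and each of the two blowups twists by $-1$), and by the Cremona analysis of the curve Chow ring, the $(-1)$-curve property is preserved throughout the Weyl orbit.

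For $(1)\Rightarrow(3)$, I would use adjunction: for any smooth rational curve $C$ with normal bundle $N_{C/Y}=\calO(-1)^{\oplus(r-1)}$, one has $(-K_Y\cdot C)=2+\deg N_{C/Y}=2-(r-1)=3-r$, and by the identity $\langle c,F\rangle=(c\cdot-K_Y)$ this gives the linear condition. The quadratic identity $\langle c,c\rangle=3-2r$ is Weyl-invariant and intrinsic to the class, so it suffices to verify it on a single representative, namely $(1;1^2)$, using the explicit formula for $\langle-,-\rangle$ on $A^{r-1}(Y)$ from the Chow-ring discussion; alternatively, one could appeal to the Riemann--Roch relation between $\langle c,c\rangle$ and $\chi(N_C)$ for a rational curve.

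The substantive step, $(3)\Rightarrow(4)$, proceeds as follows. Writing $c=(d;m_1,\ldots,m_s)$ in the standard basis, the two conditions $\langle c,F\rangle=3-r$ and $\langle c,c\rangle=3-2r$ translate into a system of a linear and a quadratic Diophantine equation in the $s+1$ nonnegative integers $d, m_1,\ldots,m_s$. Both invariants are preserved by the Weyl action, so I would reduce $c$ to a form with $d$ minimal in its orbit; under the MDS hypothesis the Weyl group is finite (equivalently $F^2>0$, by Theorem \ref{them}), so this reduction terminates after finitely many steps. One then exhausts the small remaining cases in each admissible range $(r,s)\in\{(2,s\leq 8),(3,s\leq 7),(4,s\leq 8),(r\geq 5,s\leq r+3)\}$, showing that the two conditions force $(d;m_1,\ldots,m_s)$ to be a permutation of $(1;1,1,0,\ldots,0)$ or $(r;1,\ldots,1,0,\ldots,0)$ with exactly $r+3$ entries equal to $1$.

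The main obstacle is the separate treatment of $Y^5_9$, which lies just outside the MDS range and whose Coxeter group is the infinite affine type $\widetilde{E}_8$. Here the reduction-to-small-$d$ argument no longer terminates purely on Weyl-combinatorial grounds, and one must exploit additional structure specific to $Y^5_9$: roughly, that the numerical constraints together with the requirement that $c$ be represented by an actual irreducible curve (not merely a numerical class) force $c$ into the Weyl-translate of $(1;1^2)$ or the distinguished class $(5;1^8,0)$. Handling this case carefully, and distinguishing it from the failure mode exemplified in $Y^3_8$ (where $F$ is a numerical $(-1)$-class supporting multiple non-Weyl-line representatives), will be the delicate part of the argument.
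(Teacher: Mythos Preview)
Your cycle has a real gap at $(1)\Rightarrow(3)$. The linear condition $\langle c,F\rangle=3-r$ follows from adjunction as you say, but the quadratic condition $\langle c,c\rangle=3-2r$ does not follow from $C$ being a $(-1)$-curve. Your first justification (``verify on a single representative $(1;1^2)$'') presupposes that $C$ lies in the Weyl orbit of a line through two points, which is precisely the implication $(1)\Rightarrow(2)$ you want to deduce via $(1)\Rightarrow(3)\Rightarrow(4)\Rightarrow(2)$; it is circular. Your second justification, a Riemann--Roch relation between $\langle c,c\rangle$ and $\chi(N_C)$, does not exist for $r\geq 3$: the form $\langle c,c\rangle=d^2-(r-1)\sum m_i^2$ on curve classes comes from the Coxeter combinatorics, not from the intersection product on the Chow ring, and carries no intrinsic interpretation on an individual smooth rational curve. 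The paper closes the gap by a different route: from $(1)$ one extracts only the linear condition (that $c$ is a numerical $(-1)$-class), and then shows separately (Propositions~\ref{cremona} and~\ref{-1classr+4}, via Cremona reduction and Lemma~\ref{CRnumjclass}) that an \emph{irreducible} curve whose class is a numerical $(-1)$-class must already be a $(-1)$-Weyl line. Thus the paper's chain for this part is $(1)\Rightarrow\text{numerical }(-1)\Rightarrow(2)\Rightarrow(3)$, with irreducibility doing the work that your quadratic condition cannot.

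For $(3)\Rightarrow(4)$ your Weyl-reduction approach differs from the paper's, which gives a direct Diophantine argument with no Cremona at all (Propositions~\ref{rplus3points} and~\ref{rplus4points}): setting $N_1=\sum m_i$ and $N_2=\sum m_i^2$, the two conditions in $(3)$ determine $(r-1)N_1$ and $(r-1)N_2$ as explicit functions of $d$, and the convexity inequality $N_2\geq N_1^2/s$ bounds $d$ above; then $N_1\leq N_2$ forces $d\notin(1,r)$, leaving only $d\in\{1,r\}$. This handles $Y^5_9$ uniformly as the $(r,s)=(5,9)$ instance of Proposition~\ref{rplus4points}, so no separate treatment is needed and your concern about the infinite affine Weyl group is unfounded. (Incidentally, even in your approach reduction to a minimal-degree representative always terminates, since the degree is a positive integer bounded below; finiteness of the Weyl group is irrelevant for that step.)
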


To prove this result we study the notion of numerical $(-1)$-classes
that in dimension at least $3$ for the Mori Dream Space cases 
are each the class of a unique $(-1)$-curve.
In particular we prove that in the Mori Dream Space cases there is a finite number of such curves.
Theorem \ref{class} extends to irreducible $(0)$-curves with two exceptions:
the $F$ class if $r=3$ and $s=7$, and the class $2F$ if $r=4$ and $s=8$ (Remark \ref{rem}).
 Theorem \ref{class3} discusses $(1)$-curves in even dimensional spaces $Y^r_{r+3}$. Moreover, we note in particular that $Y^5_9$ is not a MDS and has an infinite Weyl group
but has finitely many $(-1)$-curves, which may be surprising. In Section \ref{infinite} we prove that
\[
\text{\emph{ If $s\geq r+5$ there are infinitely many $(-1)$-Weyl lines
(and hence $(-1)$-curves) on $Y$.}}
\]

Moreover, in Section \ref{appMoricone} if $\mathcal{Z}_{\geq 0}\langle -1 \rangle$ denotes the cone generated by classes of  $(-1)$-curves and classes of curves that meet all $(0)$-divisorial classes non-negatively:
\[
\text{\emph{The cone of classes of  effective curves in $Y^r_s$ is a subcone of $\mathcal{Z}_{\geq 0}\langle -1 \rangle$}}.
\]

Finally, Section \ref{apl} presents applications of the theory of $(i)$-curves
to the effective cone of divisors $\text{Eff}_{\bbR}(Y)$ on $Y=Y^r_{r+3}$.
The geometry of Mori Dream spaces $Y$ was previously analyzed via work of Mukai
and techniques from birational geometry of moduli spaces. We recall that in $Y^r_{r+3}$ the cone of effective divisors is closed, and
 a line bundle $L$ on a projective variety is pseudo-effective (effective) if and only if $L\cdot C\geq 0$ for all irreducible curves $C$ that move in a family covering $X$ \cite{BDPP}. Theorems \ref{effective cone} and  Theorem \ref{nms} imply

\begin{theorem}\label{extremal rays}
\begin{enumerate}
\item The extremal rays of the cone of movable curves in $Y^r_{r+3}$  are $(0)$-Weyl lines and $(1)$-Weyl lines.
\item $(0)$-Weyl lines and $(1)$-Weyl lines are extremal rays for the cone of movable curves in $Y^r_{s}$ for arbitrary $s$.
\end{enumerate}
\end{theorem}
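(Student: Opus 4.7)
The plan is to exploit the Boucksom--Demailly--Paun--Peternell duality cited in the excerpt: the closed cone of movable curves is dual to the pseudo-effective cone $\text{Eff}_{\bbR}(Y)$ under the intersection pairing between $A^1$ and $A^{r-1}$. Whenever $\text{Eff}_{\bbR}(Y)$ is polyhedral, the extremal rays of the movable cone are in bijection, via orthogonality, with the facets of $\text{Eff}_{\bbR}(Y)$: the ray dual to a facet $\Phi$ is $\Phi^{\perp}$ intersected with the movable cone. So the proof reduces to identifying and dualizing the facets of the effective cone.

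For part $(1)$, $Y = Y^r_{r+3}$ is a Mori Dream Space by Theorem \ref{them}, hence $\text{Eff}_{\bbR}(Y)$ is a closed polyhedral cone. I would invoke Theorem \ref{effective cone} to list the extremal rays as $(-1)$-divisorial classes --- equivalently, by Corollary \ref{div}, as $(-1)$-Weyl hyperplane classes --- and then use the finite Weyl group action to reduce the enumeration of facets to finitely many orbit representatives. For each representative, the pairing developed in Section \ref{ChowRingY} produces an explicit generator of the orthogonal ray. Carrying out the calculation for one facet in each orbit, I would identify these generators with the distinguished classes $(1;1,0,\ldots,0)$ and $(1;0,\ldots,0)$, i.e.\ the $(0)$- and $(1)$-Weyl line classes. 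Weyl-equivariance of the pairing then propagates this identification across the orbits, yielding both orbit types and no others.

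For part $(2)$, when $s > r+3$ the space $Y^r_s$ is generally not a Mori Dream Space and $\text{Eff}_{\bbR}(Y^r_s)$ need not be polyhedral, so duality cannot be used wholesale. To show that a given $(0)$- or $(1)$-Weyl line class $c$ remains an extremal ray of the movable cone, it suffices to exhibit enough pseudo-effective divisor classes $D$ with $(D\cdot c)=0$ to pin $c$ down uniquely. I would use the pullback to $Y^r_s$ of the extremal divisor that was dual to $c$ on $Y^r_{r+3}$, together with the extra exceptional divisors $E_{r+4},\ldots,E_s$, which are trivially pseudo-effective and orthogonal to $c$ when $c$ is supported away from the corresponding points. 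Theorem \ref{nms} ensures that these divisors span the requisite supporting hyperplane, and the intersection computation from part $(1)$ then carries through.

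The main obstacle is the bookkeeping in part $(1)$: one must ensure that every facet of $\text{Eff}_{\bbR}(Y^r_{r+3})$ falls into exactly one of the two Weyl orbits, with no third exotic orbit of facets. This requires a careful combinatorial analysis together with the bilinear form $\langle -,-\rangle$ to rule out unexpected coincidences. A secondary subtlety in part $(2)$ is verifying that $c$ is genuinely \emph{extremal}, not merely on the boundary of the movable cone; this depends on the codimension of the orthogonal locus, but since only finitely many divisors are added in passing from $Y^r_{r+3}$ to $Y^r_s$, the codimension count from part $(1)$ is preserved.
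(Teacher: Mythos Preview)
Your overall framework via BDPP duality is correct, but you have inverted what Theorem~\ref{effective cone} provides, and this sends part~(1) on an unnecessary detour. Theorem~\ref{effective cone} does \emph{not} describe the extremal rays of $\text{Eff}_{\bbR}(Y^r_{r+3})$ as $(-1)$-divisorial classes; it gives the \emph{facet} description directly: $\text{Eff}_{\bbR}(Y^r_{r+3})=\mathcal{E}_{\geq 0}$, where $\mathcal{E}$ is already the set of $(0)$- and $(1)$-Weyl line classes. The paper's proof matches these curve classes one-by-one with the explicit inequalities $(A_i)$, $(B_{n,I(t)})$ for the effective cone taken from \cite{bdp3}. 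With that in hand, BDPP duality gives immediately that the movable cone of curves equals $\mathrm{cone}(\mathcal{E})$, so its extremal rays are among the $(0)$- and $(1)$-Weyl lines --- no passage through $(-1)$-Weyl hyperplanes is needed. Your proposed route leaves its hardest step unexplained (which collections of $(-1)$-Weyl hyperplanes actually span a common facet), and the ``main obstacle'' you flag at the end is precisely the content that Theorem~\ref{effective cone} supplies for free.

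For part~(2) your idea is close to the paper's (which proceeds via Theorem~\ref{nms}), but your formulation has a gap: a $(0)$- or $(1)$-Weyl line on $Y^r_s$ with $s>r+3$ need not be supported on only $r+3$ of the points, so it is not literally a pullback from $Y^r_{r+3}$, and the phrase ``when $c$ is supported away from the corresponding points'' does not cover the general case. The repair is to invoke Weyl equivariance on $Y^r_s$ itself (the Weyl group preserves $\text{Eff}$ and hence its dual), reducing to the representatives $c=h-e_1$ or $c=h$. Then the effective divisors $H-E_1,E_2,\ldots,E_s$ (respectively $E_1,\ldots,E_s$) are orthogonal to $c$ and span a hyperplane in $A^1(Y^r_s)$, so $c$ lies on a genuine facet of $\text{Eff}$ and is extremal in the movable cone.
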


As a corollary to part (2), the infinity of $(0)$- and $(1)$-Weyl lines give a different approach,
via the theory of movable curves,
 of the following result of Mukai (originally proved via the theory of divisors):
 \emph{If $F^2\leq 0$, then $Y^r_s$ is not a Mori Dream Space.}
 We leave as an open question to investigate if the theory of movable curves can be further applied to prove similar properties for other spaces.

\subsection*{Acknowledgements} 
We are indebted to C. Ciliberto for exposing the Example \ref{counterex} to us. The first author is a member of the Simion Stoilow Institute of Mathematics, Romanian Academy, 010702 Bucharest, Romania.

\section{The Chow ring, Cremona transformations, and the Weyl Group}\label{ChowRingY}
Let us consider the rational variety $Y=Y_s^r$
defined as the blowup of $\bbP^r$ at $s$ general points $p_1,\ldots,p_s$,
with blowup map $\pi:Y \to \bbP^r$.
Theorem \ref{CremonaAY}  contains
one of the main results of this paper:
\emph{the computations of the Weyl group orbits
for curves} directly on Chow ring of $Y$
(on which the Weyl group acts naturally)
without performing a sequence of flops.
Proposition \ref{antican} introduces the concept of the \emph{anti-canonical curve class $F$} that we will use throughout the paper.

There are two complications in using the Chow ring classes to study $(i)$-curves.
First, there is no numerical criterion for the rationality of a curve when $r \geq 3$.
(In the planar case, there is the genus formula 
expressing rationality in terms of the normal bundle and anticanonical degree,
emphasized in Proposition \ref{planar};
this was also exploited in \cite{DP} and in Section \ref{diva} in order 
to define divisorial $(i)$-classes.)

Second, even if one knows that a given class is represented by a smooth rational curve,
the normal bundle summands are difficult to compute in arbitrary dimension.
In particular, for a given degree of the normal bundle of a rational curve
it is not easy to describe simple sufficient conditions to make the normal bundle balanced.
However, we expect that for rational curves through general points,
the normal bundle should be as balanced as possible.

\begin{example}\label{line} For $i\in \{-1,0,1\}$ we have the following examples
of $(i)$-curves on $Y$:
\begin{enumerate}
\item The proper transform of a line through $1-i$ points is an $(i)$-curve, if $s \geq 1-i$.
\item The proper transform of the (unique) rational normal curve of degree $r$
through $r+2-i$ of the points is an $(i)$-curve, if $s \geq r+2-i$.
\end{enumerate}
\end{example}

These two examples are immediate,
given that the normal bundle of the line is $\calO(1)^{\oplus(r-1)}$,
the normal bundle of the rational normal curve is $\calO(r+2)^{\oplus(r-1)}$,
and upon blowing up a point the normal bundle is twisted by $-1$.

We make the following
\begin{conjecture}\label{HardConjecture}
A $(i)$-Weyl line
(a curve in the orbit of a line through $1-i$ points)
is an $(i)$-curve on $Y$.
\end{conjecture}

We note that the conjecture is trivially true in the planar case $r=2$;
in that case the Cremona transformation is an isomorphism.
We are not able to prove this in general, but we have results in special cases,
and in particular it is true for $i =0, 1$ or with $i=-1$ and $s \leq r+4$;
see Propositions \ref{s leq r+4 (-1)-curves} and \ref{ConjectureProofTheorem}.

In order to generate more examples of $(i)$-curves Theorem \ref{CremonaAY} gives the Weyl group orbit action on curves.

Weyl group actions on curves were considered for $r=3$ in \cite{LU} and $r=4$ in \cite{DM1}.
This action was used to study closure of the diminished base locus of divisors \cite{L}.

\subsection{The Chow ring of $Y^r_s$}

Let $H$ be the hyperplane class in $\bbP^r$,
and $E_i$, $1 \leq i \leq s$, be the exceptional divisors in $Y$.

The generators of the Chow ring $A^*(Y)$ of $Y$ are easy to describe;
in codimension zero we have only the class of the entire variety,
and in codimension $r$ we have only the point class.
In each of the intermediate codimensions $j$
we have the pullback (via $\pi$) of the general linear space in $\bbP^r$ of codimension $j$
(and hence of dimension $r-j$),
and, for each $i = 1,\ldots,s$,
the class of the general linear space of dimension $r-j$ inside the exceptional divisor $E_i$.

We will only require the divisor classes (i.e. $A^1(Y)$)
and the curve classes (i.e. $A^{r-1}(Y)$) in the rest of this paper.
We have already introduced notation above for the generators of $A^1$;
we will use $h$ to denote the class of the pullback of a general line in $\bbP^r$
and $e_i$ will be the class of the general line inside $E_i$ for each $i$.
The following is standard.

\begin{proposition}\label{antican}
With the above notation, we have:
\begin{itemize}
\item[(a)] The Chow group $A^0(Y)$ is one-dimensional, generated by the identity class $[Y]$.
\item[(b)] The Chow group $A^1(Y)$ has dimension $s+1$, generated by $H$ and $E_i$, $1\leq i \leq s$.
\item[(c)] The Chow group $A^{r-1}(Y)$ has dimension $s+1$, generated by $h$ and $e_i$, $1\leq i \leq s$.
\item[(d)] The Chow group $A^r(Y)$ is one-dimensional, generated by the class $[p]$ of a point $p$.
\item[(f)] Multiplication in $A(Y)$ is induced from the intersection form on $Y$,
and we have that the pairing $(-\cdot -): A^1 \times A^{r-1} \to A^r$ is given by
\[
(H \cdot h) = 1; \;\; (H\cdot e_i) = 0; \;\; (E_i \cdot h) = 0; \;\; (E_i \cdot e_j) = -\delta_{ij}
\]
where we have abbreviated the multiples of the point class $n[p]$ simply by the integer $n$.
\item[(g)] The canonical class of $Y$ in $A^1(Y)$ is given by
\[
K_Y = -(r+1) H + (r-1) \sum_{i=1}^s E_i
\]
and we define the \emph{anti-canonical curve class} $F_Y \in A^{r-1}(Y)$ to be
\[
F_Y = (r+1) h - \sum_{i=1}^s e_i.
\]
\end{itemize}
\end{proposition}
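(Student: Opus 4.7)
The plan is to derive each part of the proposition from the standard theory of Chow groups under a point blowup, applied $s$ times to $\bbP^r$. I would split the work into three logical blocks: the additive structure of the Chow groups (parts (a)--(d)), the intersection pairing (part (f)), and the canonical class computation (part (g)), noting that the class $F_Y$ in (g) is introduced by definition and requires no proof as such.

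For parts (a)--(d), I would induct on the number of blown-up points. A single point blowup $\pi: \tilde X \to X$ of a smooth $r$-fold contributes exactly one new generator in each codimension $1 \leq j \leq r-1$, namely the pushforward of the class of a general linear subspace of codimension $j-1$ inside the exceptional $\bbP^{r-1}$, and leaves the one-dimensional groups $A^0$ and $A^r$ unchanged. Starting from $A^*(\bbP^r) = \mathbb{Z}[H]/(H^{r+1})$ (one generator in each codimension) and iterating $s$ times then produces the generators as listed: $[Y]$ in codimension $0$, $H$ together with the $E_i$ in codimension $1$, $h$ together with the $e_i$ in codimension $r-1$, and the point class $[p]$ in codimension $r$.

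For part (f), the three equations $(H \cdot h) = 1$, $(H \cdot e_i) = 0$, and $(E_i \cdot h) = 0$ follow from the projection formula applied to $\pi^*H$ and $\pi^*\ell$ (where $\ell$ is a general line), combined with the observation that representatives of $H$ and of $h$ may be chosen disjoint from the blown-up points. Disjointness of distinct exceptional divisors handles $(E_i \cdot e_j) = 0$ for $i \neq j$. The only genuinely computational identity is $(E_i \cdot e_i) = -1$, which I would extract from the self-intersection formula $\calO_Y(E_i)|_{E_i} = \calO_{\bbP^{r-1}}(-1)$ by pairing with the class of a line in the exceptional divisor.

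For part (g), I would apply the canonical class formula for a point blowup in dimension $r$, $K_{\tilde X} = \pi^* K_X + (r-1) E$, iterating from $K_{\bbP^r} = -(r+1) H$, to obtain $K_Y = -(r+1) H + (r-1) \sum E_i$. The class $F_Y := (r+1)h - \sum e_i$ is then introduced; its designation as an ``anti-canonical curve class'' is deferred, since its Weyl-invariance and the value $(-K_Y \cdot F_Y)$ are developed in later sections. The main---really the only---obstacle is bookkeeping around sign conventions, to ensure that the pairing in (f) yields $(E_i \cdot e_j) = -\delta_{ij}$ with the correct minus sign, and that the sign of $\sum e_i$ in the definition of $F_Y$ is chosen to match the sign of $(r-1)\sum E_i$ in $K_Y$; beyond that, this proposition is a direct assembly of standard blowup theory as in Fulton's \emph{Intersection Theory}.
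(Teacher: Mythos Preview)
Your proposal is correct and matches the paper's treatment: the paper simply declares the proposition ``standard'' and offers no proof in the body (a commented-out sketch cites \cite{EH}, Chapter~13 for (a)--(f) and computes Chern classes for (g)). Your outline is precisely the standard argument the paper is invoking, so there is nothing to add.
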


If $\bar{C} \subset \bbP^r$ is a curve of degree $d$
having multiplicity $m_i$ at $p_i$ for each $i$,
and $C \subset Y$ is the proper transform in $Y$,
then the class $[C] \in A^{r-1}(Y)$ is
$
[C] = d h - \sum_i m_i e_{i}
$
which can be easily deduced by intersecting with the basis elements of $A^1(Y)$.
Similarly for divisors, if $D$ is a divisor on $\bbP^r$
having multiplicity $m_i$ at $p_i$ for each $i$,
then the proper transform of $D$ in $Y$ has the class $d H - \sum_i m_i E_i$.
With this notation, the intersection pairing between divisors and curves can be written as
\[
(d H - \sum_i m_i E_i \cdot d' h - \sum_i m_i' e_i) = dd' - \sum_i m_i m_i'.
\]

\subsection{The standard Cremona transformations}\label{standardCremona}

The theory in the planar case is well understood
and beyond the planar case,
Weyl orbits of curves for $r=3$ were computed by \cite{LU},
and by the two authors for $r=4$ in \cite{DM1}.
The method used in these papers is more difficult
as it involves tracing these curves through a sequence of flops of lines and planes,
and the difficulty increases significantly for arbitrary dimension $r$.
We will exploit the standard Cremona transformation of $\bbP^r$ (centered at $r+1$ points) which we describe below.
The most important result of this section is Proposition \ref{CremonaAY} part (a) and (c) 
that generalizes formulas of \cite{LU} and \cite{DM1} from dimension $3$ and $4$ 
to arbitrary dimension.

The standard Cremona transformation of $\bbP^r$
(inverting the coordinates,
i.e., sending $[x_0:\cdots:x_r]$ to $[x_0^{-1}:\cdots:x_r^{-1}]$)
is realized geometrically by blowing up the $r+1$ coordinate points,
then the proper transforms of all coordinate lines, 
then the proper transforms of all coordinate $2$-planes, etc., until one blows up 
the proper transforms of all coordinate $(r-2)$-planes;
then one blows down the exceptional divisors starting with those over the coordinate lines,
then those over the coordinate $2$-planes, etc.,
finally ending by contracting the proper transforms of the coordinate hyperplanes.
We will index the $r+1$ coordinate points used here by $\{1,\ldots,r+1\}$,
considering them as the first $r+1$ of the $s$ points to be blown up to obtain $Y^r_s$.

After the first stage of performing the blowups, we arrive at a $r$-fold $\bbX^r_{r-1}$,
which has divisor classes
$H$ (the pullback of the hyperplane class on $\bbP^r$)
and $E_J$ for index sets $J \subset \{1,\ldots,r+1\}$ with $1 \leq |J| \leq r-1$
where $E_J$ is obtained by blowing up the proper transform of the span of the coordinate points indexed by $J$ under the blow up of smaller dimensional linear spans of $J$.
The linear system that defines the Cremona transformation on $\bbX^r_{r-1}$
is given by the Cremona transformation of the hyperplane class,
which is
\[
H' = rH - \sum_{i=1}^{r-1} (r-i)\sum_{J; |J|=i} E_J
\]
For $2 \leq |J| \leq r-1$, we have that the Cremona image of $E_J$ is given by
\[
E_J' = E_{J'}
\]
where $J'$ is the complement of $J$ in $\{1, \ldots, r+1\}$.
Each $E_i$ is transformed to the coordinate hyperplane through all other indices, and so
\[
E_i' = H - \sum_{J: |J|\leq r-1, i \notin J} E_J
\]
The subspace in the codimension one part of the Chow ring of $\bbX^r_{r-1}$
generated by the divisor classes $E_J$ with $2 \leq |J| \leq r-1$
(i.e., those exceptional divisors over the positive-dimensional linear coordinate spaces of $\bbP^r$)
is invariant under the Cremona transformation,
and the quotient space inherits the action.
This quotient space is naturally isomorphic to $A^1(Y^r_{r+1})$,
and the action extends (trivially) to an action on $A^1(Y^r_s)$ for any $s \geq r+1$.
The formulas above imply that, in the Chow ring of $Y^r_s$, we have
\begin{equation}\label{H'E'}
H' = rH-(r-1)\sum_{j=1}^{r+1} E_j \;\;\text{and}\;\;
E_i' = H - \sum_{j=1,j\neq i}^{r+1} E_j\;\;\text{if}\;\; i \leq r+1
\end{equation}
while $E_i' = E_i$ if $i > r+1$.

Since the $s$ points are general, any set of $r+1$ of them
can be the base points of a corresponding Cremona transformation.
For any subset $I$ of $r+1$ indices,
we will denote by $\phi_I$ the corresponding Cremona transformation,
which induces an action on the codimension one Chow space $A^1(Y^r_s)$.
The formulas given in (\ref{H'E'}) describe $\phi_{\{1,2,\ldots,r+1\}}$ acting on $A^1(Y^r_{r+1})$.

Dually, the subspace of the curve classes in the Chow ring of $\bbX^r_{r+1}$
spanned by the general line class and the general line classes inside of each $E_i$
is also invariant under the Cremona action.
This subspace is naturally isomorphic to $A^{r-1}(Y^r_{r+1})$,
and therefore we have a Cremona action there, which extends to $A^{r-1}(Y^r_s)$.

In the next Proposition we describe these actions explicitly, and leave the details of checking the formulas to the reader.

\begin{proposition}\label{CremonaAY}
Fix any $(r+1)$-subset $I \subset \{1,2,\ldots, s\}$.
\begin{itemize}
\item[(a)]
The action of $\phi_I$ on $A^{r-1}(Y^r_s)$ is given by sending $dh-\sum_i m_i e_i$ to $d'h-\sum_i m_i' e_i$ where
\begin{align*}
d' &= rd-(r-1)\sum_{i\in I} m_i =d+(r-1)t_{r-1} \\
m_i' &= d - \sum_{j\in I, j \neq i} m_j = m_i + t_{r-1} \;\;\text{for}\;\; i \in I \\
m_i' &= m_i \;\;\text{for}\;\; i \notin I
\end{align*}
for $t_{r-1} = d - \sum_{i \in I} m_i$.  It has order two.
In particular
$\phi_I(h) = rh-\sum_{i\in I} e_i$,
$\phi_I(e_j) = (r-1)h-\sum_{i\in I, i\neq j} e_j$ if $j\in I$,
and $\phi_I(e_j) = e_j$ if $j \notin I$.

\item[(b)]
The action of $\phi_I$ on $A^1(Y^r_s)$ is given by sending $dH-\sum_i m_i E_i$ to $d'H-\sum_i m_i' E_i$ where
\begin{align*}
d' &= rd - \sum_{j\in I}m_j = d+t_1 \\
m_i' &= (r-1)d - \sum_{j\in I, j\neq i} m_j = m_i + t_1 \;\;\text{for}\;\; i \in I \\
m_i' &= m_i \;\;\text{for}\;\; i \notin I
\end{align*}
for $t_1 = (r-1)d - \sum_{i \in I} m_i$.  It has order two.
In particular
$\phi_I(H) = rH-(r-1)\sum_{i\in I} E_i$,
$\phi_I(E_j) = H-\sum_{i\in I, i\neq j} E_j$ if $j\in I$,
and $\phi_I(E_j) = E_j$ if $j \notin I$.

\item[(c)]
The intersection pairing between $A^1(Y)$ and $A^{r-1}(Y)$ is $\phi_I$-invariant, i.e.,
for any class $D \in A^1(Y)$ and $C \in A^{r-1}(Y)$, we have
\[
(D \cdot C) = (\phi_I(D) \cdot \phi_I(C)).
\]
\end{itemize}
\end{proposition}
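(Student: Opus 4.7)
The plan is to leverage the divisor formulas in \eqref{H'E'} describing the action of $\phi_I$ on the generators $H$ and $E_j$ of $A^1(Y^r_s)$, together with the fact that $\phi_I$ lifts to a biregular automorphism of the full coordinate blowup $\bbX^r_{r-1}$. Logically it is cleanest to prove (b) first, then (c), and finally deduce (a) by duality, even though they are stated in the opposite order.

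For part (b), I would expand $\phi_I(dH - \sum_i m_i E_i)$ by linearity using \eqref{H'E'} and collect coefficients. Setting $t_1 = (r-1)d - \sum_{i \in I} m_i$, the $H$-coefficient is $rd - \sum_{i\in I} m_i = d + t_1$; for $k \in I$ the contributions $-(r-1)d$ from $d\,\phi_I(H)$ and $\sum_{i \in I, i \neq k} m_i$ from the various $-m_i\,\phi_I(E_i)$ combine to give $-(m_k + t_1)$, so $m_k' = m_k + t_1$; and $m_k' = m_k$ for $k \notin I$. That $\phi_I$ is an involution is inherited from $\phi_I^2 = \mathrm{id}$ on $\bbP^r$, and can also be checked directly by applying the displayed formulas twice. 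For part (c), the construction of $\bbX^r_{r-1}$ as the iterated blowup along all positive-dimensional coordinate spans realizes $\phi_I$ as a biregular automorphism there, so the induced action on $A^*(\bbX^r_{r-1})$ preserves the intersection pairing. The subspaces spanned respectively by the relevant $A^1(Y^r_s)$- and $A^{r-1}(Y^r_s)$-classes are Cremona-invariant, as explained in the paragraph preceding the proposition, and the pairing on $Y^r_s$ is the restriction of that on $\bbX^r_{r-1}$, so the invariance descends.

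For part (a), I would exploit (b) and (c) in tandem: write $\phi_I(h) = A\,h + \sum_k B_k\,e_k$ with unknowns and impose $(\phi_I(D) \cdot \phi_I(h)) = (D \cdot h)$ for each $D \in \{H\} \cup \{E_1,\ldots,E_s\}$ using the formulas from (b). The resulting linear system forces $A = r$, $B_k = -1$ for $k \in I$, and $B_k = 0$ otherwise, yielding the stated formula for $\phi_I(h)$. The same procedure applied to $\phi_I(e_j)$ in the two cases $j \in I$ and $j \notin I$ gives the corresponding formulas; linearity then delivers the general expression for $\phi_I(dh - \sum_i m_i e_i)$, repackaged via $t_{r-1} = d - \sum_{i \in I} m_i$, and the order-two claim again follows from $\phi_I^2 = \mathrm{id}$. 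Alternatively one can compute the curve action directly by tracing representative cycles of $h$ and $e_j$ through the blowup-blowdown sequence defining the Cremona map on $\bbX^r_{r-1}$. The main obstacle is really combinatorial bookkeeping — making sure the various partial sums of the $m_i$'s assemble into the advertised $t_{r-1}$ form — rather than any conceptual difficulty.
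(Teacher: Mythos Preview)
Your proposal is correct. The paper in fact does not supply a proof of this proposition at all: the sentence immediately preceding it reads ``In the next Proposition we describe these actions explicitly, and leave the details of checking the formulas to the reader.'' What the paper does set up, in the paragraphs before, is the explicit description of the Cremona as a birational self-map resolved on $\bbX^r_{r-1}$, with the formulas for $H'$, $E_i'$, $E_J'$ there; the implicit expectation is that the reader verifies (a) by tracing representative curves through that blowup--blowdown sequence (the alternative you mention at the end) and checks (b) and (c) by expanding \eqref{H'E'}.

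Your primary route---first (b) by linearity from \eqref{H'E'}, then (c) from the biregularity of $\phi_I$ on $\bbX^r_{r-1}$, and finally (a) by duality using the nondegeneracy of the pairing---is a genuinely different organization and arguably cleaner, since it avoids following cycles through iterated blowups. One small point of precision: the paper identifies $A^1(Y^r_{r+1})$ with a \emph{quotient} of $A^1(\bbX^r_{r-1})$ (modulo the span of the $E_J$ with $2\leq |J|\leq r-1$), not a subspace, whereas $A^{r-1}(Y^r_{r+1})$ is identified with a \emph{subspace} of $A^{r-1}(\bbX^r_{r-1})$. Your argument for (c) still goes through, because those $E_J$ annihilate the curve classes $h$ and $e_i$ under the pairing on $\bbX^r_{r-1}$, so the $\phi_I$-invariant pairing descends to the quotient on one side and restricts to the subspace on the other; but you should say ``quotient'' rather than ``subspace'' on the $A^1$ side.
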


We will abbreviate $\phi = \phi_I$ for $I = \{1,2,\ldots,r+1\}$.

\subsection{The Weyl Group}\label{WeylGroupSubsection}
In this Section equation \ref{bil} introduces a bilinear form on $A^{r-1}$ (and on $A^1$) 
and Corollary \ref{mds} exploits properties of these forms in the language of Mori Dream Spaces.

Note that the symmetric group on the indices of the $s$ points
acts on all these spaces, and preserves the intersection form pairing also;
moreover if $\sigma$ is the permutation taking the subset $I$ to the subset $J$,
then $\sigma \phi_I \sigma^{-1} = \phi_J$.
The group $W$ generated by the Cremona transformations and the symmetric group
is called the \emph{Weyl Group} of these Chow spaces.

The canonical class $K_Y = -(r+1)H + (r-1)\sum_i E_i$ is the only symmetric $\phi$-invariant class
(up to scalars) in $A^1$ (and it is invariant under all $\phi_I$ therefore).
Dually, the anticanonical curve class $F =(r+1)h - \sum_{i=1}^s e_i $
is the only symmetric $\phi$-invariant class (up to scalars) in $A^{r-1}(Y)$,
and again it is invariant under all $\phi_I$.

The symmetric group is generated by the transpositions $\sigma_i = (i,i+1)$
for $1\leq i \leq s-1$,
and so the Weyl group generated by the Cremona transformation and the symmetric group
is generated by the elements $\phi$ and $\sigma_i$.
Each of these elements have order two.
Moreover it is easy to see that $\sigma_i$ and $\sigma_j$ commute
if and only if $|i-j|>1$,
and $\phi$ and $\sigma_i$ commute for all $i \neq r+1$.
Finally we have that $(\sigma_i\sigma_{i+1})^3 = 1$ for each $i$,
and $(\phi\sigma_{r+1})^3 = 1$.

We therefore see that the Weyl group actions on $A^1$ and on $A^{r-1}$
give representations of the Coxeter group associated to the $T_{2,r+1,s-r+1}$ graph:
\begin{equation}\label{Tpqrgraph}
\begin{matrix}
(1)  -  (2)  -  \cdots  -  (r)  - &  (r+1) & -  (r+2)  \cdots  -  (s-1) \\
     &     |           &    \\
     &     (0)       &     \\
\end{matrix}
\end{equation}
where the $(0)$ vertex corresponds to $\phi$,
and the $(i)$ vertex corresponds to $\sigma_i$ for $i \geq 1$.

We will describe this a bit further in the rest of this section,
but this is relatively well known; see
\cite{dolgachev83}, \cite{dolgort88}, \cite{dolgachev08} for example.
The relevant theory from Coxeter groups can be found in
\cite{Bo}, \cite{Hu}, and/or \cite{BB}.

There are $W$-invariant quadratic forms on $A^1$ and $A^{r-1}$,
defined by
\begin{equation}
q_1(dH-\sum_i m_iE_i) = (r-1)d^2-\sum_i m_i^2\;\;\text{and}\;\;
q_{r-1}(dh-\sum m_ie_i) = d^2 - (r-1)\sum_i m_i^2;
\end{equation}
we leave it to the reader to check that these are $\phi$-invariant
(they are clearly symmetric).
These give rise to associated bilinear forms: for $\alpha = 1$ or $\alpha = r-1$ we have
\begin{equation}\label{bil}
\langle x, y \rangle_\alpha = \frac{1}{2}(q_\alpha(x+y)-q_\alpha(x)-q_\alpha(y))
\end{equation}
and as usual $q_\alpha(x) = \langle x, x \rangle_\alpha$ for all $x$ and both values of $\alpha$.

We will usually abbreviate the bilinear form on the curve classes
as simply $\langle -,- \rangle$ without the subscript.

In the divisor case $\langle x, y \rangle_1$, is the Dolgachev-Mukai pairing
that has $\langle H, H \rangle_1 = r-1$, $\langle E_i, E_i \rangle_1 = -1$,
and all other values on the given basis elements for $A^1$ equal to zero.
In the curve case, when $r=3$ this quadratic invariant was observed and used in $\cite{LU}$;
its formula is
\[
\langle dh-\sum_i m_ie_i, d'h-\sum_i m_i'e_i\rangle = dd'-(r-1)\sum_i m_i m_i'.
\]

\begin{proposition} With the above notation, we have the following:
\begin{enumerate}
\item The action of $W$ on $A^{r-1}$ induces an action on
$K^\perp = \{c\in A^{r-1}\;|\; (c\cdot K) = 0\}$ which has dimension $s$.
\item For $c \in A^{r-1}$ we have $(-K \cdot c) = \langle F, c\rangle$.
Hence the orthogonal space to $F$
(with respect to the pairing given by the bilinear form $\langle -,-\rangle$)
is equal to $K^\perp$.
\item The action of $W$ on $K^\perp$
is isomorphic to the standard geometric representation of the Coxeter group.
\item The standard bilinear form on the standard geometric representation corresponds,
under this isomorphism,
to $(-1/2)$ times the restriction of the $\langle -,-\rangle$ pairing on $A^{r-1}$.

\item Since the $\langle -,-\rangle$ pairing on $A^{r-1}$ has signature $(1,s)$,
the standard bilinear form of the Coxeter group is positive definite
if and only if $q_{r-1}(F)  > 0$;
this corresponds to having $(r+1)^2>s(r-1)$,
i.e., if and only if
\begin{equation}\label{mdswithrs}
r=2 \text{ and } s \leq 8; \text{ or } r=3, 4 \text{ and } s \leq r+4; \text{ or } r\geq 5 \text{ and }  s \leq r+3.
\end{equation}
\end{enumerate}
\end{proposition}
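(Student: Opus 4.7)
The plan is to handle parts (1) and (2) by direct computation, establish (3) and (4) together by exhibiting an explicit system of simple roots inside $K^\perp$, and then derive (5) by combining the signature of $\langle -,-\rangle$ with the classical dictionary between finiteness of a Coxeter group and positive-definiteness of its standard form.

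For (1), the class $K_Y$ is Weyl-invariant (being, up to scalar, the unique symmetric $\phi$-invariant element of $A^1$), and the intersection pairing is Weyl-invariant by Proposition \ref{CremonaAY}(c); together these give that $W$ preserves $K^\perp$, which has dimension $s$ since $K_Y\neq 0$ and the pairing is non-degenerate. For (2), evaluating both sides on $c=dh-\sum m_i e_i$ yields the common value $(r+1)d-(r-1)\sum m_i$, so $F^\perp=K^\perp$ as subspaces of $A^{r-1}$.

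For (3)--(4), I would take $\beta_i=e_i-e_{i+1}$ for $1\leq i\leq s-1$ and $\beta_0=(r-1)h-\sum_{i=1}^{r+1}e_i$ as candidates for the simple roots in $K^\perp$; each $\beta_j$ lies in $F^\perp=K^\perp$ by a direct check via (2). The crucial point is that each Weyl generator acts on $A^{r-1}$ as the $\langle -,-\rangle$-reflection with root $\beta_j$, i.e.\ $g(v)=v-2\frac{\langle \beta_j,v\rangle}{\langle \beta_j,\beta_j\rangle}\beta_j$ for $g=\sigma_j$ or $\phi$. The transposition case is transparent; the $\phi$ case follows by direct substitution into the formulas of Proposition \ref{CremonaAY}(a). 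A computation of the Gram matrix then yields $\langle \beta_j,\beta_j\rangle=-2(r-1)$ on the diagonal and $\langle \beta_i,\beta_j\rangle=r-1$ precisely when $i,j$ are joined by an edge in the graph \eqref{Tpqrgraph}, vanishing otherwise. Rescaling the $\beta_j$ to have squared length $-2$, the map $\alpha_j\mapsto \beta_j$ identifies $K^\perp$ (equipped with $\langle -,-\rangle$) with the standard geometric representation of the Coxeter group, under which the standard form $B(\alpha_i,\alpha_j)=-\cos(\pi/m_{ij})$ becomes $-\tfrac{1}{2}\langle -,-\rangle|_{K^\perp}$, establishing (3) and (4) simultaneously.

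For (5), I would invoke the classical fact (see e.g.\ \cite{Hu}) that a Coxeter group is finite if and only if its standard bilinear form is positive definite. The matrix of $\langle -,-\rangle$ in the basis $(h,e_1,\dots,e_s)$ is diagonal with eigenvalues $1,-(r-1),\dots,-(r-1)$, so it has signature $(1,s)$. When $F$ is non-isotropic, the orthogonal decomposition $A^{r-1}=\bbR F\oplus F^\perp$ shows that $-\langle -,-\rangle|_{K^\perp}$ is positive definite precisely when $F$ carries the unique positive direction, i.e.\ $\langle F,F\rangle>0$; an isotropic $F$ makes the restriction degenerate, and a negative one makes it indefinite. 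Evaluating $\langle F,F\rangle=(r+1)^2-s(r-1)$ and solving $(r+1)^2>s(r-1)$ case by case (noting that $s<r+3+4/(r-1)\leq r+4$ for $r\geq 5$) reproduces the listed ranges. The main obstacle throughout is the explicit verification underlying (4)---checking the reflection formulas and the Gram entries on the nose, particularly for $\phi$ acting in $\beta_0$---after which the rest reduces to a short inequality analysis and the Coxeter-theoretic dictionary.
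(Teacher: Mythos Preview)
Your proposal is correct and follows the standard route the paper points to (citing \cite{dolgachev83}, \cite{dolgort88}, \cite{dolgachev08}, \cite{Bo}, \cite{Hu}) without writing out details. The one presentational difference worth flagging is that the paper's own (suppressed) argument carries out the root computation on the divisor side $A^1$, using $W_0=H-\sum_{i=1}^{r+1}E_i$ and $W_i=E_i-E_{i+1}$ together with the Dolgachev--Mukai form $\langle-,-\rangle_1$, and then invokes the duality between the $A^1$- and $A^{r-1}$-representations; you instead work directly in $A^{r-1}$ with $\beta_0=(r-1)h-\sum_{i=1}^{r+1}e_i$ and $\beta_i=e_i-e_{i+1}$, which is arguably cleaner since the proposition is stated for curve classes. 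On $A^1$ the simple roots already have $\langle W_j,W_j\rangle_1=-2$, so the $(-1/2)$ factor in part~(4) appears without rescaling; on $A^{r-1}$ one gets $\langle\beta_j,\beta_j\rangle=-2(r-1)$, and your rescaling by $\sqrt{r-1}$ is exactly what is needed to recover the stated constant. Two small points you could make explicit: that the $\beta_j$ are linearly independent (immediate since only $\beta_0$ has nonzero $h$-coefficient), hence a basis of the $s$-dimensional $K^\perp$; and that your reflection check for $\phi$ in $\beta_0$ is literally the formula of Proposition~\ref{CremonaAY}(a) with $t=d-\sum_{i=1}^{r+1}m_i$, which you allude to but could display.
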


Since Coxeter groups are finite exactly when the standard bilinear form is positive definite, we have:

\begin{corollary}
The Weyl group, acting on either $A^1$ or $A^{r-1}$, is finite if and only if
$(r,s)$ satisfy (\ref{mdswithrs}).
\end{corollary}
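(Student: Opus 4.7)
The plan is to reduce the claim to Tits' classical theorem: an abstract Coxeter group is finite if and only if the standard bilinear form on its geometric representation is positive definite. The preceding Proposition has already done essentially all of the algebraic work; the corollary is then a short deduction.

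First, I would record the group-theoretic identification. By the discussion around (\ref{Tpqrgraph}), the generators $\phi,\sigma_1,\ldots,\sigma_{s-1}$ of $W$ satisfy exactly the Coxeter relations of the $T_{2,r+1,s-r+1}$ diagram, which yields a surjection from the abstract Coxeter group onto $W$. By parts (3) and (4) of the Proposition, the induced action on $K^\perp\subset A^{r-1}$ is the standard geometric representation, and the Tits form corresponds (up to the factor $-\tfrac{1}{2}$) to the restriction of $\langle-,-\rangle$. Since the standard geometric representation is faithful (a standard result from Coxeter theory, e.g.\ Humphreys), this surjection is injective. Hence $W$ is isomorphic to the abstract Coxeter group, and in particular $W$ is finite iff this Coxeter group is finite.

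Next, I would combine Tits' theorem with part (5) of the Proposition. Tits' theorem says the Coxeter group is finite iff the Tits form is positive definite, i.e.\ iff the restriction of $\langle-,-\rangle$ to $K^\perp$ is negative definite. Because the global signature on $A^{r-1}$ is $(1,s)$, the restriction to the codimension-one subspace $K^\perp$ is negative definite precisely when a complementary direction gives the positive eigenspace. Since $\langle F,c\rangle=(-K\cdot c)$ by part (2) of the Proposition, the line $\mathbb{R}F$ is a complement to $K^\perp$ exactly when $\langle F,F\rangle\neq 0$, and in that case it carries the unique positive direction iff $q_{r-1}(F)>0$. Part (5) then spells out the arithmetic: $q_{r-1}(F)>0$ unpacks to $(r+1)^2>s(r-1)$, whose integer solutions are exactly the cases listed in (\ref{mdswithrs}).

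Finally, I would observe that the Weyl action on $A^1$ is the dual representation via the perfect pairing in Proposition \ref{CremonaAY}(c), and $K\in A^1$ plays the role dual to $F$; the same argument with $q_1$ in place of $q_{r-1}$ (or simply the fact that both representations realize the same abstract group $W$) shows the Weyl group is finite on $A^1$ under the same condition. The main obstacle is cosmetic rather than substantive: one must be careful with the sign conventions and with the fact that the faithful representation lives on the $s$-dimensional subspace $K^\perp$ rather than on all of the $(s+1)$-dimensional $A^{r-1}$, so that the invariant $F$-direction does not contribute to finiteness. Once this bookkeeping is set up, the corollary follows immediately.
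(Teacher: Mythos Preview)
Your proposal is correct and follows essentially the same approach as the paper: invoke the standard Coxeter-theoretic fact (Tits' theorem) that a Coxeter group is finite exactly when the bilinear form on its geometric representation is positive definite, and then read off the numerical condition from part (5) of the preceding Proposition. The paper dispatches this in a single sentence before the Corollary; you have unpacked the same argument more carefully, in particular making explicit the faithfulness of the geometric representation (to identify $W$ with the abstract Coxeter group) and the signature bookkeeping on $K^\perp$ versus the $F$-direction, which the paper leaves implicit.
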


The pairs $(r,s)$ satisfying (\ref{mdswithrs}) exactly describe the cases when $Y^r_s$ is a Mori Dream Space (see \cite{Mu05} and \cite{CT}).
Hence:
\begin{corollary} \label{mds}
$Y=Y^r_s$ is a Mori Dream Space if and only if the Weyl group is finite.
\end{corollary}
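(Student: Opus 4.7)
The plan is to deduce this corollary by combining the previous corollary (which characterizes finiteness of the Weyl group in terms of the numerical condition (\ref{mdswithrs})) with the classification of Mori Dream Spaces among the varieties $Y^r_s$ established by Mukai \cite{Mu05} and Castravet--Tevelev \cite{CT}. Concretely, I would first restate that by the preceding corollary, the Weyl group acting on $A^1$ (equivalently $A^{r-1}$) is finite if and only if the standard bilinear form of the associated Coxeter group is positive definite, which via the signature analysis is equivalent to $q_{r-1}(F) > 0$, i.e., $(r+1)^2 > s(r-1)$, i.e., the pairs $(r,s)$ in the list (\ref{mdswithrs}).

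Next I would invoke the classification result: by Mukai's theorem, $Y^r_s$ is a Mori Dream Space precisely for those same pairs $(r,s)$, namely $r=2$ with $s\leq 8$, $r=3$ with $s\leq 7$, $r=4$ with $s\leq 8$, and $r\geq 5$ with $s \leq r+3$. (The relevant half, that $Y^r_s$ fails to be a Mori Dream Space when $(r+1)^2 \leq s(r-1)$, follows from Mukai's argument that $F^2 \leq 0$ forces an infinite effective cone; the complementary half, establishing finite generation of the Cox ring in the listed cases, is the content of \cite{CT} for the boundary cases and classical for the interior ones.) Matching the two lists termwise gives the biconditional.

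The only subtlety to spell out carefully is the equivalence between the numerical condition $(r+1)^2 > s(r-1)$ and the enumerated case list, which is a direct computation: for $r=2$ the inequality reads $9 > s$; for $r=3$ it reads $16 > 2s$, i.e.\ $s\leq 7$; for $r=4$ it reads $25 > 3s$, i.e.\ $s\leq 8$; and for $r\geq 5$ a small check shows that the inequality is equivalent to $s\leq r+3$. I would present this as a short verification inside the proof so the reader sees that (\ref{mdswithrs}) is exactly the Mukai list.

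The main (and only) obstacle here is not an internal argument but the dependence on the external classification of Mori Dream Space blow-ups of $\bbP^r$; since the excerpt already states that (\ref{mdswithrs}) matches the Mukai/Castravet--Tevelev list, the corollary follows immediately from the previous corollary by transitivity. The proof is thus essentially a one-line citation wrapping the numerical equivalence already proved.
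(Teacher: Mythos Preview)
Your proposal is correct and follows essentially the same approach as the paper: both deduce the corollary by matching the numerical condition (\ref{mdswithrs}) from the previous corollary against the Mukai/Castravet--Tevelev classification of which $Y^r_s$ are Mori Dream Spaces. The paper's version is simply the one-line citation you anticipate, without the explicit case-by-case verification of the inequality.
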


It turns out that the action of $W$ on the curve classes
is isomorphic to the action on the divisor classes.
Hence all of the above statements could have been reformulated
with the divisor classes as well.
For example, the standard bilinear form of the Coxeter group
is positive definite if and only if $q_{1}(K) >0$.

\section{Basics for curves in $Y^r_s$}\label{SectionBasicsCurves}
In this section we introduce the the main terminology for curves that we will use in the paper
and we discuss general properties.
In Section \ref{diva} we explain that Weyl group orbits on divisors can be expressed algebraically using the bilinear form $\langle,  \rangle_1$ and we apply these actions to the description of the Mori cone of curves in Section \ref{appMoricone}.

\subsection{General facts}\label{gen}

Assume that $C$ is a smooth irreducible curve of genus $g$
in a smooth variety $Y$ of dimension $r$. The tangent-bundle/normal-bundle sequence for $C \subset Y$ gives
\begin{equation}\label{c1Nformula}
\deg c_1(N_{C/Y}) = 2g-2 - (C \cdot K_Y).
\end{equation}
and therefore by Riemann-Roch, we have

\begin{equation}\label{chiNformula}
\chi(N_{C/Y}) = (r-3)(1-g) - (C \cdot K_Y).
\end{equation}

In the case when $\deg c_1(N_{C/Y}) = i(r-1)$
(as is the case with an $(i)$-curve),
the two equations above reduce to

\[
(C \cdot K_Y) = 2g-2 -i(r-1) \;\;\;\text{and}\;\;\; \chi(N_{C/Y}) = (g-1-i)(1-r)
\]
which proves the following.

\begin{lemma}\label{generalcurve} For a smooth curve $C$ in $Y$ with $\deg(N_{C/Y}) = i(r-1)$,
the following are equivalent:
\begin{itemize}
\item[(a)] The genus $g$ of $C$ is zero, so that $C$ is a smooth rational curve.
\item[(b)] $\chi = \chi(N_{C/Y}) = (r-1)(i+1)$.
\item[(c)] $(C\cdot K_Y) = -2-i(r-1)$.
\end{itemize}
\end{lemma}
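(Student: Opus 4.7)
The plan is to derive all three equivalences by direct substitution into the two preceding displayed identities (\ref{c1Nformula}) and (\ref{chiNformula}); no further geometric input is required, since the hypothesis $\deg c_1(N_{C/Y}) = i(r-1)$ is already all the information needed to turn the genus into a single numerical parameter.

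First I would specialize (\ref{c1Nformula}) using the hypothesis, which solves for $(C\cdot K_Y)$ as an affine function of the genus $g$, namely $(C\cdot K_Y) = 2g-2-i(r-1)$. Reading this linear relation in both directions immediately gives the equivalence of (a), that $g=0$, with (c), that $(C\cdot K_Y) = -2-i(r-1)$.

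Next I would substitute this same expression for $(C\cdot K_Y)$ back into (\ref{chiNformula}); after collecting terms the right-hand side factors as $(r-1)(i+1-g)$. Since $Y$ has dimension $r\geq 2$ the factor $r-1$ is nonzero, so this quantity equals $(r-1)(i+1)$ precisely when $g=0$, which gives (a)$\iff$(b) and closes the cycle of equivalences.

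I do not foresee any real obstacle: the lemma is essentially a bookkeeping consequence of the adjunction and Riemann--Roch identities already tabulated just above the statement. The only step meriting any attention is the invocation of $r-1\neq 0$ when passing from (b) back to (a), which is automatic under the standing assumption that $Y$ is a smooth variety of dimension at least two.
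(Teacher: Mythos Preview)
Your argument is correct and matches the paper's own proof essentially verbatim: the paper specializes (\ref{c1Nformula}) and (\ref{chiNformula}) under the hypothesis $\deg c_1(N_{C/Y})=i(r-1)$ to obtain $(C\cdot K_Y)=2g-2-i(r-1)$ and $\chi(N_{C/Y})=(r-1)(i+1-g)$, and reads off the three equivalences from there. Your remark that $r-1\neq 0$ is needed for (b)$\Rightarrow$(a) is a small point the paper leaves implicit but is of course harmless under the standing assumption $r\geq 2$.
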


Without the hypothesis on the degree of the normal bundle, the two equations
(\ref{c1Nformula}) and (\ref{chiNformula}) 
allow us to easily conclude the following.

\begin{lemma} Assume $r\geq 4$.  For a smooth curve $C$,
any two of the above (a), (b), and (c) imply the third.
For $r=3$, if (a) holds, then (b) and (c) are equivalent.
\end{lemma}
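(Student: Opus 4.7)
The argument I would give rests solely on the Riemann--Roch identity (\ref{chiNformula}), which is a single linear relation among the three quantities $g$, $\chi(N_{C/Y})$, and $(C\cdot K_Y)$ appearing in conditions (a), (b), (c). The other formula (\ref{c1Nformula}) is not needed here, since $\deg c_1(N_{C/Y})$ does not occur in any of the three conditions. Thus, given any two of (a), (b), (c), the third should be forced by (\ref{chiNformula}), provided the variable to be pinned down has nonzero coefficient in that identity.

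The implications (a)+(c)$\Rightarrow$(b) and (a)+(b)$\Rightarrow$(c) are immediate. In the first, set $g=0$ and $(C\cdot K_Y)=-2-i(r-1)$ in (\ref{chiNformula}) and simplify the right-hand side to $(r-3)+2+i(r-1)=(r-1)(i+1)$. In the second, set $g=0$ and $\chi(N_{C/Y})=(r-1)(i+1)$ in (\ref{chiNformula}) and solve for $(C\cdot K_Y)$, obtaining $(r-3)-(r-1)(i+1)=-2-i(r-1)$. Both directions work for every $r$, since the coefficients of $\chi(N_{C/Y})$ and $(C\cdot K_Y)$ in (\ref{chiNformula}) are $\pm 1$.

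The only direction that requires attention is (b)+(c)$\Rightarrow$(a). Plugging the prescribed values of $\chi(N_{C/Y})$ and $(C\cdot K_Y)$ into (\ref{chiNformula}) and cancelling the $i(r-1)$ and constant terms collapses everything to $(r-3)g=0$. When $r\geq 4$ the coefficient $r-3$ is strictly positive, which forces $g=0$ and yields (a). When $r=3$ this coefficient vanishes, the equation degenerates to $0=0$, and no constraint on $g$ remains; this is precisely why the lemma refuses the implication in dimension three.

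For the final sentence of the lemma, I would observe that when $r=3$ the identity (\ref{chiNformula}) specializes to $\chi(N_{C/Y})=-(C\cdot K_Y)$, independent of $g$. The two prescribed targets in (b) and (c) agree under the trivial arithmetic $(r-1)(i+1)=2(i+1)=-(-2-i(r-1))$, so (b) and (c) collapse to the single numerical statement $\chi(N_{C/Y})=2(i+1)$; assuming (a), they are tautologically equivalent. There is no substantive obstacle: the proof is a careful bookkeeping exercise with a single linear identity, and the only subtlety worth flagging is tracking the degenerate role of $r=3$, which the reduction to $(r-3)g=0$ exposes cleanly.
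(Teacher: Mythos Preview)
Your proof is correct and follows essentially the same approach as the paper: both reduce the implication (b)+(c)$\Rightarrow$(a) to the equation $(r-3)g=0$ via the Riemann--Roch identity (\ref{chiNformula}), noting that this pins down $g$ for $r\geq 4$ but not for $r=3$. Your observation that (\ref{c1Nformula}) is unnecessary is accurate (the paper's preamble mentions both equations, but the actual one-line argument uses only (\ref{chiNformula})), and your remark that for $r=3$ the equivalence (b)$\Leftrightarrow$(c) holds even without assuming (a) is a harmless slight strengthening of the lemma's stated conclusion.
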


The only difficulty comes in assuming (b) and (c) with $r=3$;
indeed, (b) and (c) together imply that $(r-3)g=0$ and we have no conclusion for $g$.

\begin{definition}\label{def3.5}
\begin{enumerate} Fix $i \in \{-1,0,1\}$, and let $Y=Y^r_s$.
\item An \emph{$(i)$-curve} in $Y$ is a smooth rational  irreducible curve
whose normal bundle splits as a direct sum of $\calO(i)$ line bundles.
\item A \emph{numerical $(i)$-curve} in $Y$ is an smooth rational curve $C$ such that
$(K_Y \cdot C) = -2 -i(r-1)$
\item An \emph{$(i)$-Weyl line} is a curve that is the Cremona image 
(under the Weyl group) of the proper transform of a line through $1-i$ of the $s$ points.
\item An \emph{$(i)$-curve class} is the class $c$ of an $(i)$-curve.
\item A \emph{numerical $(i)$-class} is a curve class $c \in A^{r-1}$ such that
$(K_Y \cdot c ) = -2 -i(r-1)$
\item An \emph{$(i)$-Weyl class} is a class $c \in A^{r-1}$
which is in the orbit of the class of an $(i)$-Weyl line
(equivalently, a class of an $(i)$-Weyl line). Hence
\begin{enumerate}
\item a $(-1)$-Weyl class is in the orbit of $h-e_{1}-e_{2}$; 
\item a $(0)$-Weyl class is in the orbit of $h-e_1$;
\item a $(1)$-Weyl class is in the orbit of $h$.
\end{enumerate}
\end{enumerate}
\end{definition}


Every $(i)$-curve class is a numerical $(i)$-class.
The numerical $(i)$-curves
are transformed into one another via Cremona transformations;
hence so are the numerical $(i)$-classes.

Basic questions in this situation are whether all $(i)$-curves are $(i)$-Weyl lines,
and whether all numerical $(i)$-curves are $(i)$-curves.
With the curve classes, we want to investigate
whether the numerical $(i)$-classes are all realized by classes of $(i)$-curves,
and whether the $(i)$-curve classes are $(i)$-Weyl classes.

Motivation for these definitions may be provided by the following.
Suppose $C$ is a smooth rational curve in $Y=Y^r_s$;
then we define the virtual dimension of $C$ to be
\begin{equation}\label{param}
vdim(C):=(r+1)(d+1) - (r-1)\sum_{i=1}^s m_i - 4.
\end{equation}
This comes from considering the parametrization
of the map from $\bbP^1$ to $\bbP^r$ defining the image of $C$ in $\bbP^r$,
which is defined using $r+1$ polynomials of degree $d$
(having then $(r+1)(d+1)$ coefficients),
and noting that a multiplicity $m$ point imposes $(r-1)m$ conditions.
(The last $-4$ term comes from the three-dimensional family of automorphisms of $\bbP^1$
and the homogeneity in $\bbP^r$.)
This is just a naive dimension count, assuming that all the multiplicity conditions are independent;
the actual number of parameters is at least this.

Even if this number is non-negative,
one cannot conclude that an irreducible curve exists though;
the solutions to the equations may lie at the boundary of the parameter space.

Of course this virtual number of parameters is just $\chi$ of the normal bundle:
by (\ref{chiNformula}), we have
$\chi(N_{C/Y}) = r-3-(C\cdot K_Y) = r-3 +(r+1)d - (r-1)\sum_i m_i = (r+1)(d+1) - (r-1)\sum_{i=1}^s m_i - 4$
as claimed.

If we assume that the number of parameters for such a curve $C$ is non-negative
(so that in general we can hope that
such a rational curve $C$ with these multiplicities is expected to exist)
but that it is isolated and is a reduced point in the Hilbert scheme
(it doesn't move in a family, even infinitesimally),
then we are imposing that $H^0 = H^1 = 0$ for the normal bundle,
which is equivalent to having $N_{C/Y}$ split as a direct sum of $\calO(-1)$'s,
and hence $C$ will be a $(-1)$-curve.

We then expect a finite number of smooth rational $(-1)$-curves
representing a numerical $(-1)$-class.

The following classification enables us to prove Conjecture \ref{HardConjecture} for $i = -1$ in some special cases of interest.

\begin{proposition}\label{s leq r+4 (-1)-curves}
\begin{itemize}
\item[(a)] If $r \geq 3$ and $s \leq r+4$ then every $(-1)$-Weyl line is either a line through two points or the rational normal curve of degree $r$ through $r+3$ points.
\item[(b)] If $r = 2$, or $r \geq 3$ and $s \leq r+4$, then every $(-1)$-Weyl line is a $(-1)$-curve.
\end{itemize}
\end{proposition}

\medskip\noindent{\bf Proof:}
To prove (a), start with the line through two points (which we may assume to be $p_1$ and $p_2$).
Apply the first standard Cremona transformation, by choosing $r+1$ base points.
If $p_1$ and $p_2$ are both among those base points, the line is contracted, and we do not produce a $(-1)$-Weyl line.
If only one of the two points are among those base points, the line is transformed to a line through two points.
If neither of the two points are among those base points, the line is transformed to the rational normal curve through all of the $r+3$ points.

Now analyze the rational normal curve through $r+3$ points.  
Again choose $r+1$ base points and apply the standard Cremona transformation.
If those $r+1$ points are a subset of the $r+3$ points on the curve,
then the curve is transformed to the line through the remaining two points.
If not, then since $s \leq r+4$, we must have $s=r+4$
and the $r+1$ base points consist of $r$ points on the curve,
and the extra point not among the $r+3$ points on the curve.
In that case the curve is transformed to a rational normal curve again.

This proves (a), and statement (b) is clear for $r=2$.
For $r\geq 3$, it follows by first noting that
the line starts with normal bundle equal to $\mathcal{O}(1)^{\oplus(r-1)}$ in $\mathbb{P}^r$
and the rational normal curves starts with normal bundle equal to $\mathcal{O}(r+2)^{\oplus(r-1)}$ in $\mathbb{P}^r$.
Since blowing up a point twists the normal bundle by $\mathcal{O}(-1)$, 
both become $(-1)$-curves in $Y^r_s$.

\rightline{QED}

The classes of irreducible curves have additional constraints on them,
some of which are captured in the following.

\begin{lemma}\label{irreducible}
Suppose $c=dh-\sum_i m_i e_{i}$ is the class of an irreducible curve $C$
not inside one of the exceptional divisors $E_i$.  Then
\begin{itemize}
\item[(a)] for each $i$, $d \geq m_i \geq 0$;
\item[(b)] If $d \geq 2$ then for each $i\neq j$, $d \geq m_i + m_j$;
\item[(c)] if the multiplicities are in descending order $m_1 \geq m_2 \geq \ldots \geq m_s$,
and if for some $k \leq r$ we have $d < \sum_{i=1}^k m_i$,
then $m_{k+1}=\cdots=m_s = 0$.
\end{itemize}
\end{lemma}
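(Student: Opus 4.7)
The plan is to push everything down to $\bbP^r$: setting $\bar C = \pi(C)$, one has an irreducible curve of degree $d$ with multiplicity $m_i$ at $p_i$, and every inequality will be obtained by pairing $c$ with the proper transform of a suitable linear subspace through certain base points. The central observation is that if $D\subset\bbP^r$ is a hypersurface of degree $a$ passing through $p_{i_1},\ldots,p_{i_k}$ with multiplicity one and $\bar C\not\subset D$, then the proper transform has class $aH-E_{i_1}-\cdots-E_{i_k}$, and its pairing with $c$ computes the geometric intersection number $ad-m_{i_1}-\cdots-m_{i_k}$, which must be non-negative. Each part then amounts to finding such a divisor and, when appropriate, proving by contradiction that $\bar C$ cannot be contained in all such divisors.

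For part (a), the non-negativity $m_i\geq 0$ is immediate from the identity $m_i=(E_i\cdot c)$: since $C\not\subset E_i$ the intersection is proper and effective. For $d\geq m_i$ I would choose a hyperplane $H\ni p_i$ that avoids a chosen smooth point of $\bar C$ (possible since the pencil of hyperplanes through $p_i$ has base locus $\{p_i\}$ while $\bar C$ is a positive-dimensional variety), so that $\bar C\not\subset H$, and then read off $d-m_i=(H-E_i)\cdot c\geq 0$.

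For part (b), I would use that $d\geq 2$ forces $\bar C\neq L_{ij}$, where $L_{ij}$ is the line through $p_i$ and $p_j$. If every hyperplane through $p_i$ and $p_j$ contained $\bar C$, intersecting all such hyperplanes would give $\bar C\subset L_{ij}$, contradicting irreducibility and $d\geq 2$; so some hyperplane $H\supset\{p_i,p_j\}$ misses $\bar C$, and $(H-E_i-E_j)\cdot c=d-m_i-m_j\geq 0$. Part (c) proceeds in the same spirit: let $\Lambda$ be the linear span of $p_1,\ldots,p_k$, which by generality has dimension exactly $k-1\leq r-1$, so proper hyperplanes through $\Lambda$ exist. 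If $d<\sum_{i=1}^k m_i$ then for every hyperplane $H\supset\Lambda$ the value $(H-E_1-\cdots-E_k)\cdot c$ is negative, forcing $\bar C\subset H$; intersecting over all such hyperplanes gives $\bar C\subset\Lambda$, and the remaining general points $p_{k+1},\ldots,p_s$ avoid $\Lambda$ by general position, so none can lie on $\bar C$, yielding $m_{k+1}=\cdots=m_s=0$. The only mild subtlety throughout is justifying at each step that the hyperplane of the required type can be chosen not to contain $\bar C$; this rests on the generality of the $p_i$ together with $\dim\bar C=1$, and I expect this bookkeeping to be the main (though minor) thing to verify carefully.
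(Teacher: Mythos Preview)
Your proposal is correct and follows essentially the same approach as the paper: for (a) you use that $C\not\subset E_i$ gives $m_i\geq 0$ and that the base-point-free system $|H-E_i|$ gives $d\geq m_i$; for (b) and (c) you argue by contradiction that negative intersection with all hyperplanes through the chosen points forces $\bar C$ into their common linear span, exactly as the paper does. The only cosmetic difference is that the paper phrases (a) via ``$|H-E_i|$ has no base locus'' on $Y$, while you pick an explicit hyperplane in $\bbP^r$ avoiding a smooth point of $\bar C$; these are the same argument.
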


\medskip\noindent{\bf Proof:}
If such a curve $C$ exists with this class, then the intersection with $E_i$ must be non-negative, so each $m_i \geq 0$.
In addition, intersecting $C$ with divisors in the class $H-E_i$
shows that $d \geq m_i$ since $|H-E_i|$ has no base locus.
This proves (a).
For (b), if $d < m_i+m_j$,
then the intersection of $C$ with every hyperplane through $p_i$ and $p_j$ is negative,
so that if $C$ is irreducible it must be inside every such hyperplane,
and therefore inside their intersection, which is the line joining the two points;
hence $d=1$ (and $c$ must be the class of that line).
Finally (c) is similar; if the inequality holds then $C$ lies inside the linear space
spanned by those first $k$ points, and so cannot have positive multiplicity at
any other point outside that linear space (where all other points are by the generality).

\rightline{QED}

\subsection{Projections}\label{projections}
Projections offer an interesting perspective and tool to study these curves.
If $C$ is an irreducible curve in $\bbP^r$ 
whose proper transform in $Y^r_s$ has class 
$c = dh-\sum_{i=1}^s m_ie_i \in A^{r-1}(Y^r_s)$, 
then we may consider the projection $\pi$ from any one of the multiple points
(say $p_1$ with multiplicity $m_1$)
and obtain the curve $\pi(C) \subset \bbP^{r-1}$
whose proper transform in $Y^{r-1}_{s-1}$ has class 
$\pi(c) = (d-m_1)h-\sum_{i=2}^s m_i e_i \in A^{r-2}(Y^{r-1}_{s-1})$.

We've seen above that $\chi(N_{C/Y})$ is determined by the class of $C$,
and it can happen that $\chi(N)$ is non-negative for $C$ but negative for $\pi(C)$.
This is an indication that,
although the $\chi(N)$ computation suggests that a rational curve exists with that class,
the projection is not expected to exist
(and hence the original curve actually won't either).

For example, suppose $c$ is a numerical $(-1)$-class
so that $(r+1)d-(r-1)\sum_{i=1}^s m_i =3-r$;
this is the condition that $\chi(N_{C/Y}) = 0$ if the irreducible curve $C$ exists.
In this case $\chi(N_{\pi(C)}) = r(d-m_1)-(r-2)\sum_{i=2}^{s} - 3 + (r-1)$,
and one computes that
\[
\chi(N_C) - \chi(N_{\pi(C)}) = d +m_1+1 - \sum_{i=2}^{s-1} m_s.
\]
Therefore if this quantity is positive we don't expect $\pi(C)$ to exist,
and hence shouldn't expect $C$ to exist.
We would generally apply this when $m_1$ is the largest multiplicity,
obtaining the criterion that
\begin{equation}\label{projectioninequality}
d+m_1 \leq \sum_{i=2}^{s} m_s
\end{equation}
for $C$ to be expected to exist.
This essentially says that, for a given degree, no single multiplicity should be too large.

It is easy to see that projections commute with the Cremona transformation:
\begin{proposition}\label{Cremonaprojectioncommuteprop}
Let $c=dh-\sum_im_ie_i$ be a class in $A^{r-1}(Y^r_s)$.
We have the Cremona transformation $\phi:A^{r-1}(Y^r_s) \to A^{r-1}(Y^r_s)$ 
on $Y^r_s$
(based at the first $r+1$ points)
and also the Cremona transformation 
$\phi:A^{r-2}(Y^{r-1}_{s-1}) \to A^{r-2}(Y^{r-1}_{s-1})$
(based at the first $r$ points).
If we denote by $\pi$ the projection from the first point, we have
\[
\pi(\phi(c)) = \phi(\pi(c)).
\]
\end{proposition}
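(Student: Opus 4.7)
The plan is to verify the identity by explicit computation using the formulas in Proposition \ref{CremonaAY}(a) applied at two different dimensions, and observe that the Cremona ``shift parameter'' is the same on both sides. The key structural point is that the parameter $t_{r-1}=d-\sum_{i=1}^{r+1} m_i$ attached to $\phi$ on $Y^r_s$ equals the parameter $t_{r-2}=(d-m_1)-\sum_{i=2}^{r+1}m_i$ attached to $\phi$ on $Y^{r-1}_{s-1}$ after projection, since removing $p_1$ from the Cremona base set and simultaneously subtracting $m_1$ from the degree cancel.

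First I would write $c=(d;m_1,\ldots,m_s)$ and compute $\phi(c)$ via Proposition \ref{CremonaAY}(a) with $I=\{1,\ldots,r+1\}$. Setting $t:=d-\sum_{i=1}^{r+1}m_i$, this gives
\[
\phi(c)=\bigl(d+(r-1)t;\; m_1+t,\; m_2+t,\ldots,m_{r+1}+t,\; m_{r+2},\ldots,m_s\bigr).
\]
Then I would apply $\pi$ (projection from the first point), subtracting the first multiplicity from the degree and discarding the first coordinate:
\[
\pi(\phi(c))=\bigl((d-m_1)+(r-2)t;\; m_2+t,\ldots,m_{r+1}+t,\; m_{r+2},\ldots,m_s\bigr).
\]

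Second, I would compute the other composite. Projection yields $\pi(c)=(d-m_1;\,m_2,\ldots,m_s)\in A^{r-2}(Y^{r-1}_{s-1})$. Applying Proposition \ref{CremonaAY}(a) in ambient dimension $r-1$ at the first $r$ points (i.e.\ the residues of $p_2,\ldots,p_{r+1}$), the relevant shift is $t':=(d-m_1)-\sum_{i=2}^{r+1}m_i$, and by the identity $t'=t$ noted above, the formulas of that proposition give
\[
\phi(\pi(c))=\bigl((d-m_1)+(r-2)t;\; m_2+t,\ldots,m_{r+1}+t,\; m_{r+2},\ldots,m_s\bigr),
\]
which matches $\pi(\phi(c))$ term by term.

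There is no real obstacle: the entire proof is bookkeeping with the Cremona formulas. The only place to exercise care is the indexing convention (making sure the Cremona base in $A^{r-2}(Y^{r-1}_{s-1})$ consists of exactly the residues of $p_2,\ldots,p_{r+1}$) and the scalar identity $t'=t$; once these are recorded, the two computations produce the same tuple and the proposition follows.
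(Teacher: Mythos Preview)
Your proof is correct and follows essentially the same approach as the paper: an explicit computation using the formulas of Proposition~\ref{CremonaAY}(a) at both dimensions, with the key observation that the Cremona shift parameters coincide, $t'=(d-m_1)-\sum_{i=2}^{r+1}m_i=d-\sum_{i=1}^{r+1}m_i=t$. Your write-up is in fact tidier than the paper's, which contains apparent typos in the degree terms (writing $d+t$ and $d-m_1+t'$ rather than $d+(r-1)t$ and $d-m_1+(r-2)t'$), though the paper's identification $t'=t$ is stated correctly and carries the argument.
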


\begin{pf}
Using the formulas from Proposition \ref{CremonaAY}(b),
we see that if we define $t=d-\sum_{i=1}^{r+1} m_i$,
then
\[
\phi(c) = (d+t) h - \sum_{i=1}^{r+1}(m_i+t) e_i - \sum_{i > r+1} m_i e_i,
\]
and so
\[
\pi(\phi(c)) = (d+t-(m_1+t)) h - \sum_{i=2}^{r+1} (m_i+t) e_i - \sum_{i > r+1} m_i e_i.
\]
Now $\pi(c) = (d-m_1)h - \sum_{i=2}^s m_i e_i$, so that if we define $t' = (d-m_1)-\sum_{i=2}^{r+1} m_i$,
we have
\[
\phi(\pi(c)) = (d-m_1+t')h - \sum_{i=2}^{r+1} (m_i+t') e_i - \sum_{i>r+1} m_ie_i.
\]
The result follows by noting that $t'=t$.
\end{pf}

We note that if $I$ is any subset of $r+1$ indices for the $s$ points of $Y^r_s$,
and $i \in I$,
then we may denote the Cremona transformation based at the points with indices in $I$ by $\phi_I$.
If we denote the projection from the $i$-th point by $\pi_i$,
then the same proof as above shows that
\[
\pi_i(\phi_I(c)) = \phi_{I-\{i\}}(\pi_i(c)).
\]

We present the following Lemma, which will be useful later.

\begin{lemma}\label{projection-1lemma}
Assume that $r \geq 3$ and $s \leq r+4$.
Let $C$ be a $(-1)$-Weyl line in $Y^r_s$.
Since $C$ is a $(-1)$-Weyl line, there is a Cremona transformation $A$ that takes $C$ to a line through two points.  
Factor $A$ as $a_1a_2\ldots a_k$ minimally, so that each $a_i$ reduces the degree.
Then there is a base point of $a_k$ such that projecting from that point
gives a $(-1)$-Weyl line class in $Y^{r-1}_{s-1}$.
\end{lemma}

\medskip\noindent{\bf Proof:}
Let us argue by induction on $k$.
If $k=1$, then $C$ is obtained from the line through two points by applying one standard Cremona transformation based at $r+1$ of the $s$ points.
If the two points of the line are among the $r+1$ points we have a contradiction: the line is contracted.
If exactly one of the two points of the line are among the $r+1$ points, 
then $C$ is again a line through two points, and this means that $k=0$ not $k=1$.
Hence we may suppose that neither of the two points of the line 
are among the $r+1$ points.
In this case $C$ is the rational normal curve of degree $r$ in $\bbP^r$ through $r+3$ points.
Hence the projection of $C$ from one of those $r+3$ points having multiplicity one is a rational normal curve of degree $r-1$ in $\bbP^{r-1}$ as is well-known.

Suppose now the statement is true for $k$, and let us show it for $k+1$.

Let $\pi$ be the projection from one of the base points of the first standard Cremona $a_{k+1}$ in the factorization of $A$.
If $c$ is the class of $C$, then Proposition \ref{Cremonaprojectioncommuteprop} shows that
$B(\pi(c)) = \pi(a_{k+1}(c)) $
where $B$ is the standard Cremona transformation of $\bbP^{r-1}$ 
based at the remaining $r$ points,
the images of the base points of $a_{k+1}$ which are not the projection point.
Since $B$ is an involution, we have therefore
$\pi(c) = B(\pi(a_{k+1}(c)))$.
Now by induction on $k$,
the class $a_{k+1}(c)$ represent a curve $D$ in $\bbP^r$ which satisfies the Lemma,
i.e., the projection from one of the $r+1$ base points of $a_k$ is a $(-1)$-Weyl line.
Since $s \leq r+4 \leq 2r+1$, 
there is at least one point of intersection between the $r+1$ base points of $a_{k+1}$ 
and of $a_k$. 
This is the point we choose to project from with the projection $\pi$.

In that case by induction $\pi(a_{k+1}(c))$ is the class of a $(-1)$-Weyl line.
Therefore so is $B(\pi(a_{k+1}(c)))$.
Therefore so is $Q(c)$.

\rightline{QED}

\subsection{The Planar Case.}\label{section planar}

We now analyse $(i)$-curves in the planar case $Y=Y^2_{s}$.

We note that in this case $r=2$

We note that in this case, $A^1 = A^{r-1}$ and all three bilinear forms
$(-\cdot -)$, $\langle -,-\rangle_1$, 
and $\langle -,-\rangle = \langle -,-\rangle_{r-1}$ are equal,
and the curve class $F = -K$.

For a plane curve class $c$, the common invariants can be determined from
the intersection form:
the arithmetic genus $p_a(c) = (2+(c\cdot c)+ (c\cdot K_Y))/{2}$
and the Euler characteristic $\chi(c) = \chi(\calO_Y(c)) = ((c\cdot c)- (c\cdot K_Y))/{2}$.
With these formulas, we have immediately the following:

\begin{proposition}\label{planar}
Let $c\in \text{Pic}(Y)$ be an arbitrary curve class and fix $i\in \mathbb{Z}$.
Then any two of the following statements imply the others:
\begin{enumerate} 
\item $p_a(c)=0$
\item $\chi(c)=2+i$
\item $(c \cdot c)=i$
\item $(c \cdot K_Y) = -2-i$.
\end{enumerate}
\end{proposition}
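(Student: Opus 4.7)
The plan is to reduce all four conditions to linear equations in the two intersection numbers $a:=(c\cdot c)$ and $b:=(c\cdot K_Y)$, and then verify that any two of these equations together force the common solution $(a,b)=(i,-2-i)$, from which the remaining two conditions are then immediate.

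First, I would invoke the adjunction formula on the smooth surface $Y=Y^2_s$ to obtain $2p_a(c)-2=a+b$, so that (1) becomes the linear relation $a+b=-2$. Next, Riemann--Roch on the rational surface $Y$ (for which $\chi(\mathcal O_Y)=1$) gives $\chi(\mathcal O_Y(c))=1+(a-b)/2$, so that (2) becomes $a-b=2+2i$. Conditions (3) and (4) are already in linear form, namely $a=i$ and $b=-2-i$.

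To finish I would check that each of the six possible pairs $\{(1),(2)\}$, $\{(1),(3)\}$, $\{(1),(4)\}$, $\{(2),(3)\}$, $\{(2),(4)\}$, $\{(3),(4)\}$ is a rank-two system compatible with the unique solution $(a,b)=(i,-2-i)$: the diagonal pair $\{(1),(2)\}$ is solved by adding and subtracting the two relations, the four mixed pairs are solved by substituting a coordinate equation into a diagonal relation, and $\{(3),(4)\}$ gives $(a,b)$ directly, from which $p_a=0$ and $\chi=2+i$ are read off using the two identities above.

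The proposition is essentially pure linear algebra in two unknowns once the adjunction and Riemann--Roch identities are in hand, and I do not foresee any real obstacle. The only point worth keeping in mind is the additive correction $\chi(\mathcal O_Y)=1$ appearing in Riemann--Roch on a rational surface, which must be included in order to match the value $2+i$ (rather than $1+i$) stated in (2); this correction is valid for every blowup of $\mathbb P^2$ and therefore throughout the range of $Y^2_s$ considered.
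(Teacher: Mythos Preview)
Your proposal is correct and follows essentially the same route as the paper: the paper simply records the adjunction and Riemann--Roch identities $p_a(c)=(2+(c\cdot c)+(c\cdot K_Y))/2$ and $\chi(c)=((c\cdot c)-(c\cdot K_Y))/2$ and declares the proposition immediate, while you carry out the linear algebra explicitly. In fact your version is slightly more careful: the paper's displayed Riemann--Roch formula omits the additive term $\chi(\mathcal O_Y)=1$, whereas you include it, and that correction is exactly what is needed to obtain $\chi(c)=2+i$ rather than $1+i$.
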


A class satisfying the fourth condition is, by definition, a numerical $(i)$-class.
If $C$ is an irreducible curve with class $c$,
then the first condition says that $C$ is smooth and rational,
and the third condition on the self-intersection would say that $C$ is an $(i)$-curve.

However these numerical conditions do not imply that a curve $C$
with a class satisfying the above conditions must be irreducible:
\begin{example}\label{exp}
Consider the planar divisor of degree $5$ with two triple points and eight simple points.
It satisfies the conditions of Proposition \eqref{planar} with $i= -1$
but it is not represented by an irreducible curve:
the proper transform of the line through the two triple points splits off from this system.
\end{example}

\begin{remark}
If $i\geq -1$, and if $C$ is any class satisfying any two of the above equivalent conditions,
with positive degree, then we have 
\[
\chi(C)=h^0(C)-h^1(C)=2+i\geq 1
\]
(because the $H^2$ term must vanish if $C$ has positive degree).
We conclude that any such class is effective.
\end{remark}

The next result is the planar case of Theorem \ref{div} whose proof is a simple consequence of the Max Noether inequality.

\begin{theorem}\label{Y2sirreducible}
Let $C$ be an irreducible curve in $Y$ and fix $i\in \{-1,0,1\}$.
Then $C$ is an $(i)$-curve if and only if $C$ is an $(i)$-Weyl line.
\end{theorem}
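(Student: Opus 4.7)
My plan is to prove the nontrivial direction---that every irreducible $(i)$-curve $C$ is an $(i)$-Weyl line---by a Noether-style Cremona reduction on its class $c = [C] \in A^1(Y)$; the reverse direction follows directly from Remark \ref{3 defintions} since Cremona transformations and coordinate permutations preserve irreducibility, smoothness, and the $\calO(i)$ normal bundle of a line through $1-i$ points. Writing $c = dh - \sum_j m_j e_j$ with $m_1 \geq m_2 \geq \cdots \geq m_s$, the $(i)$-curve hypothesis in dimension two means $C$ is smooth and rational with $C^2 = i$, so by Proposition \ref{planar} we have $\sum m_j = 3d - 2 - i$ and $\sum m_j^2 = d^2 - i$.

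If $C$ is contained in some exceptional divisor $E_j$, then $C = E_j$, $i = -1$, and $e_j = \phi_{\{a,b,j\}}(h-e_a-e_b)$ is a $(-1)$-Weyl line. Otherwise $C$ is the proper transform of an irreducible plane curve, so Lemma \ref{irreducible} gives $m_j \geq 0$ and, when $d \geq 2$, $m_j + m_k \leq d$ for $j \neq k$. When $d = 1$, the two arithmetic constraints combined with $m_j \in \{0,1\}$ force precisely $1-i$ of the multiplicities to equal $1$, so $c$ equals $h$, $h - e_a$, or $h - e_a - e_b$---an $(i)$-Weyl line by definition.

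When $d \geq 2$, the key assertion is the Noether-type inequality $m_1 + m_2 + m_3 > d$; granting this, I apply the Cremona transformation $\phi_{\{1,2,3\}}$ from Proposition \ref{CremonaAY} to obtain a class $\phi(c)$ of strictly smaller degree $d' = 2d - (m_1 + m_2 + m_3)$, still with non-negative multiplicities by Lemma \ref{irreducible}(b). The image $\phi(C)$ is an irreducible smooth rational curve with the same normal bundle type, so we iterate; the degree strictly decreases each step and the process terminates at $d \leq 1$, where the base case above identifies the class as an $(i)$-Weyl line. Since every step is a Weyl group element, $c$ itself is an $(i)$-Weyl class and $C$ is an $(i)$-Weyl line.

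The main obstacle is establishing the Noether inequality for numerical $(i)$-classes of irreducible curves with $d \geq 2$. If $m_3 = 0$, then the contrary assumption $m_1 + m_2 \leq d$ combined with $\sum m_j = 3d - 2 - i$ forces $d \leq (2+i)/2 < 2$, a contradiction. If $m_3 \geq 1$ (which already requires $d \geq 3m_3 \geq 3$ from $m_1+m_2+m_3 \leq d$ and $m_1,m_2 \geq m_3$), then bounding $\sum_{j \geq 3} m_j^2 \leq m_3 \sum_{j \geq 3} m_j$ and maximizing $m_1^2 + m_2^2$ subject to $m_1 + m_2 \leq d - m_3$ and $m_1 \geq m_2 \geq m_3$ yields the necessary condition $6 m_3^2 - (2d + 2 + i) m_3 + i \geq 0$. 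This quadratic in $m_3$ has positive leading coefficient but takes the negative values $4 - 2d$ at $m_3 = 1$ and $-(2+i)d/3 + i$ at $m_3 = d/3$, so by convexity it is strictly negative throughout the admissible interval $1 \leq m_3 \leq d/3$, giving the contradiction and completing the proof.
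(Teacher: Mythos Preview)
Your proof is correct and follows essentially the same route as the paper: reduce via quadratic Cremona transformations, using the Noether inequality $m_1+m_2+m_3>d$ to guarantee the degree strictly drops. The paper does not spell this out---it simply cites the Max Noether inequality (via \cite{DP}, Theorem 0.4) and declares the result a consequence---so your argument is more self-contained. Your convexity trick (checking the quadratic $6m_3^2-(2d+2+i)m_3+i$ at the endpoints $m_3=1$ and $m_3=d/3$) is a clean way to handle the Noether step for all three values of $i$ simultaneously. One small remark: the case $C\subset E_j$ (and hence the theorem itself) tacitly needs $s\geq 3$ so that $e_j$ lies in the Weyl orbit of a line through two points; this is implicit in the paper as well.
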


The next result shows that for planar Mori Dream Spaces
the irreducibility assumption may be relaxed.

\begin{theorem}\label{thm1}
Let $s \leq 8$, so that $Y^2_s$ is a Del Pezzo Surface, and fix $i\in \{-1,0,1\}$.
Then an effective divisor $C$ is linearly equivalent to an $(i)$-curve if and only if the conditions of Proposition \ref{planar} hold, i.e.
\begin{equation}\label{equa2}
(C \cdot C) =i \;\;\text{ and } \;\; (C \cdot K_Y) = -2-i.
\end{equation}
If $i=-1$ then $C$ is a $(-1)$-curve itself.
\end{theorem}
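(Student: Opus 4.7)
The only-if direction follows immediately from Proposition \ref{planar}: any $(i)$-curve is smooth and rational with self-intersection $i$, so the proposition gives $(C\cdot K_Y) = -2-i$ as well. For the if direction I argue by cases on $i$.

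For $i = -1$, I will prove the stronger statement that $C$ is itself a $(-1)$-curve. Since $s \leq 8$ makes $Y = Y^2_s$ a del Pezzo surface, $-K_Y$ is ample. Decompose $C = \sum_j n_j C_j$ into distinct irreducible components with $n_j \geq 1$. Ampleness of $-K_Y$ yields $(C_j\cdot K_Y) \leq -1$ for each $j$, and adjunction together with $p_a(C_j) \geq 0$ yields $(C_j\cdot C_j) \geq -1$. The identity
\[
-1 = (C\cdot K_Y) = \sum_j n_j (C_j\cdot K_Y),
\]
being a sum of integers each $\leq -1$ with positive multipliers $n_j \geq 1$, forces exactly one term, with $n_j = 1$ and $(C_j\cdot K_Y) = -1$. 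Hence $C = C_j$ is irreducible, and combined with $(C\cdot C) = -1$, adjunction gives $p_a(C) = 0$, so $C$ is a smooth rational curve with self-intersection $-1$, that is, a $(-1)$-curve.

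For $i \in \{0, 1\}$, I aim to show that the class $[C]$ is an $(i)$-Weyl class, since by Example \ref{line} every $(i)$-Weyl line is an $(i)$-curve, giving the desired linear equivalence. I use a Noether-type reduction via the Weyl group. Write $[C] = dh - \sum m_j e_j$ and, using permutations (Weyl operations), reorder so that $m_1 \geq \cdots \geq m_s$. If the inequality $d \geq m_1 + m_2 + m_3$ fails, the Cremona transformation $\phi_{\{1,2,3\}}$ of Proposition \ref{CremonaAY}(a) produces a class in the same Weyl orbit with strictly smaller degree $d' = 2d - (m_1+m_2+m_3) < d$. Iterating and re-sorting, we terminate at a reduced representative $[C^*] = d^* h - \sum m_j^* e_j$ with $d^* \geq m_1^* + m_2^* + m_3^*$, preserving the numerical invariants $d^{*2} - \sum (m_j^*)^2 = i$ and $3d^* - \sum m_j^* = 2+i$. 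A direct case analysis on $d^*$, using $s \leq 8$, then shows the only reduced solutions are $[C^*] = h$ when $i = 1$ and $[C^*] = h - e_j$ (for some $j$) when $i = 0$. These are classes of lines through $1-i$ points, hence $(i)$-Weyl classes, and so $[C]$ is also an $(i)$-Weyl class.

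The main obstacle is the classification of reduced classes. For each $d^* \geq 2$, one must systematically combine the Noether-type inequality $d^* \geq m_1^* + m_2^* + m_3^*$ with the tight constraints $\sum m_j^* = 3d^* - (2+i)$ and $\sum (m_j^*)^2 = d^{*2} - i$ to derive contradictions, and one must also verify that no solutions occur with $d^* \leq 0$. The bound $s \leq 8$ is essential here, since on $Y^2_9$ the anticanonical curve class $F = 3h - \sum_{j=1}^9 e_j$ satisfies $F^2 = 0$ and $(F\cdot K_Y) = -2$, giving a reduced $(0)$-class not represented by a line and indicating that the result cannot extend beyond the del Pezzo range.
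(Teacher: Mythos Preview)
Your treatment of $i=-1$ is correct and matches the paper's argument: ampleness of $-K_Y$ forces $C$ to be irreducible of anticanonical degree one, and adjunction finishes.

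For $i\in\{0,1\}$ you take a genuinely different route from the paper, and it has a real gap. You propose to Cremona-reduce the class and then assert that ``a direct case analysis on $d^*$, using $s\le 8$, then shows the only reduced solutions are $h$ and $h-e_j$,'' but you never perform this analysis; indeed your final paragraph openly identifies it as ``the main obstacle.'' This is the entire content of the argument, so without it the proof is incomplete. There are also unaddressed subtleties: you implicitly treat the multiplicities $m_j^*$ as non-negative, but Cremona reduction can produce negative multiplicities (equivalently, $C$ may contain exceptional components), and your termination and classification steps need to be made robust against this. These issues can be handled, but doing so requires real work that is absent here.

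By contrast, the paper avoids Weyl reduction entirely. It exploits the fact that $(C\cdot -K_Y)=2+i\in\{2,3\}$ is tiny: since every irreducible curve has anticanonical degree at least one, $C$ decomposes into at most three components, and a short check of the possible splittings against the self-intersection condition shows that either $C$ is already an irreducible $(i)$-curve or it is a sum of $(-1)$-curves whose linear system has a smooth rational general member. This is a two-paragraph argument with no case analysis over degrees, no Weyl machinery, and no issues with signs of multiplicities. Your approach would recover the same result, but only after a substantially longer computation that you have not supplied.
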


\begin{proof}
In the Del Pezzo case, $-K$ is ample,
and so the $(C \cdot K)$ condition says that 
$C$ has anticanonical degree equal to $1$, $2$, or $3$.
If $(C\cdot -K) = 1$ (the $i=-1$ case), then $C$ must be irreducible and smooth,
and then the $(C\cdot C)$ condition implies that $C$ is rational,
hence a $(-1)$-curve.

In case $C$ has anticanonical degree greater than $1$,
we conclude in a similar way if $C$ is irreducible.
Hence assume $C$ is not irreducible, with anticanonical degree $2$ or $3$.
In that case $C$ must split as $C = kG + J$ where $G$ has anticanonical degree $1$,
(and hence as above is a $(-1)$-curve),
$1 \leq k \leq 2$, and $G$ and $J$ are distinct.
If $k=2$, then $J$ is a $(-1)$-curve as well, and in that case
$(C \cdot C) = 4(G^2) + (J^2) + 4(G\cdot J) = 4(G\cdot J)-5$
which cannot be equal to $0$ or $1$ as required.

We can therefore assume $k=1$.
In the case $i=0$ with anticanonical degree $2$,
then $C = G+J$ with $G$ and $J$ distinct $(-1)$-curves,
and the self-intersection condition implies $(G\cdot J)=1$.
In that case $C$ moves in a pencil whose general member is a $(0)$-curve.

In the case $i=1$ with anticanonical degree $3$,
then $J$ has anticanonical degree $2$;
by the above analysis we may assume that $J$ moves in a pencil
whose general member is a $(0)$-curve.
In that case
we have $1 = (C \cdot C) = (G+J \cdot G+J) = 2(G\cdot J)-1$,
which gives $(G\cdot J)=1$ and then $C$ moves in a linear system
whose general member is an $(1)$-curve.
\end{proof}


\begin{proposition}\label{del}
Suppose that $i\in \{-1,0\}$,and $C$ is an irreducible curve in $Y^2_s$
whose class is a numerical $(i)$-class.  Then the possibilities are:
\begin{enumerate}
\item ($i=-1$ case): $C$ is either a $(-1)$-Weyl curve or $C$ is a cubic with class $F$ in $Y^2_8$.
\item ($i=0$ case): $C$ is either a $(0)$-Weyl curve, or $C$ is a cubic with class $F$ in $Y^2_7$, or $C$ is a sextic with class $2F$ in $Y^2_8$.
\end{enumerate}
\end{proposition}

\begin{proof}
Suppose that $C$ is an irreducible curve on $Y$ with positive degree
whose class is a numerical $(i)$-class, i.e., $(C\cdot K_Y) = -2-i$.
Write the class of $C$ as $c = (d;m_1,m_2,\ldots,m_s)$
with the multiplicities in decreasing order.
If $d=1$ then $-3+\sum_i m_i = -2-i$ so $\sum m_i = 1-i \leq 2$;
hence we must have an $(i)$-Weyl line.
We may therefore assume that $d \geq 2$;
if $d < m_1+m_2+m_3$,
we may perform a Cremona transformation and reduce the degree.
Hence we may assume that $d \geq m_1+m_2+m_3$.
In this case the numerical condition on $(C\cdot K)$ gives 
$
0\leq (m_1-m_4)+(m_2-m_5)+(m_2-m_6)+(m_3-m_7)+(m_3-m_8)+m_1 \leq 2+i 
$
If $m_1=0$ then all $m_i=0$ and this is impossible.
If $m_1=1$ then assume that $m_k=1$ but $m_{k+1} = 0$;
then the intersection with $K$ gives that $3d-k=2+i$,
so that the only solutions that respect the inequalities are
$(1;1)$ and $(3; 1^7)$ for $i=0$ and $(3; 1^8)$ for $i=-1$.
The case $m_1 \geq 2$ gives the unique solution $(6; 2^8)$ for $i=0$.
\end{proof}

Theorem \ref{nms2} of Section \ref{infinite} in the planar case  reproves via the theory of movable curves  that if $Y$ is not a del Pezzo surface then it is not a Mori Dream Space,
because there are infinitely many $(0)$- and $(1)$-Weyl lines.

\subsection{The Divisor Case}\label{diva}

Certain algebraic equations for the Weyl orbits of hyperplanes
using the Dolgachev-Mukai intersection pairing
were established in \cite{DP}.
In this section Corollary \ref{div} extends results from $(-1)$-Weyl hyperplanes
to $(1)$-Weyl hyperplanes. 
As we will observe in this paper, $(-1)$-Weyl lines in higher dimension
cannot be characterized by the quadratic and linear bilinear form.

Since there is no arithmetic formula
to express a rationality condition for divisors or curves
in projective space of higher dimension,
in \cite{DP} the definition of divisorial classes
was formulated algebraically similarly to conditions
(3) and (4) of Proposition \ref{planar}.
We introduce below $(0)-$ and $(1)-$ Weyl hyperplanes.

\begin{definition}\label{def} Let $i\in \{-1,0,1\}$.
\begin{enumerate}
\item An \emph{$(i)$-Weyl hyperplane} is a divisor in the Weyl orbit
of a hyperplane through $r-1-i$ points.

\item A \emph{numerical $(i)$-divisorial class} is a divisor class $[D] \in A^1$
such that  $\langle  [D], [D] \rangle_1 =i$
and $\frac{1}{r-1}\langle  [D], -K_Y \rangle_1=2+i$.
\end{enumerate}
\end{definition}

It is obvious that the class of an $(i)$-Weyl hyperplane
is a numerical $(i)$-divisorial class.
They are equivalent for irreducible effective divisors.
This is based on the Noether inequality for divisors;
for the proof of the following see \cite[Theorem 0.4]{DP}:

\begin{proposition} 
Assume $i\in \{-1,0,1\}$.
Let $D$ be an irreducible effective divisor whose class is a numerical $(i)$-divisorial class,
and assume that is not a hyperplane through $n-1$ or $n-2$ base points.
Then the divisor $D$ is not Cremona reduced, i.e.
if we write $[D] = dH-\sum_i m_i E_i$ and order the multiplicities in decreasing order,
then $(r-1)d < \sum_{i=1}^{r+1} m_i$.
\end{proposition}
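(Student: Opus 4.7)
The plan is to argue by contradiction. Assume that $D$ is Cremona reduced, so after reordering the multiplicities decreasingly we have $(r-1)d \geq \sum_{j=1}^{r+1} m_j$, and derive that $D$ must be a hyperplane through $r-1-i$ base points. For $i \in \{0,1\}$ this is the excluded case; for $i = -1$ the resulting hyperplane through $r$ points has $\sum_{j=1}^{r+1} m_j = r > r-1 = (r-1)d$, contradicting the Cremona-reduced hypothesis itself.

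First I would expand the two numerical conditions into the explicit identities
\[
(r+1)d - \sum_{j=1}^s m_j = 2+i, \qquad (r-1)d^2 - \sum_{j=1}^s m_j^2 = i,
\]
using $-K_Y = (r+1)H - (r-1)\sum_j E_j$ and the values of $\langle -,-\rangle_1$ on the basis of $A^1$. Combining the linear identity with the Cremona-reduced bound immediately gives the tail estimate $\sum_{j=r+2}^s m_j \geq 2d - 2 - i$, showing that if $d \geq 2$, a non-trivial amount of multiplicity must be carried by the smaller-index points.

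The core step is a Noether-type inequality derived from the irreducibility of $D$. For any pair of base points $(p_a, p_b)$ such that the line $\ell_{ab}$ through them is not contained in $D$, the intersection $D \cdot \widetilde{\ell}_{ab} = d - m_a - m_b \geq 0$ gives $m_a + m_b \leq d$; in particular $m_j \leq d/2$ for $j \geq 2$. Substituting these pointwise bounds into $\sum m_j^2 \leq m_1^2 + \tfrac{d}{2}\sum_{j \geq 2} m_j$ and comparing with the quadratic identity $\sum m_j^2 = (r-1)d^2 - i$ together with the linear identity yields a polynomial inequality in $d$ that, combined with Cremona-reducedness and $m_1 \leq d$, forces $d \leq 1$. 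Once $d = 1$ is established, the linear identity forces $\sum m_j = r-1-i$, and since an irreducible hyperplane has $m_j \in \{0,1\}$, the divisor $D$ is a hyperplane through exactly $r-1-i$ of the base points, and the three cases for $i$ conclude the contradiction as outlined above.

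The main obstacle is the special configuration in which $D$ contains lines through many pairs of base points, invalidating the pointwise bounds $m_a + m_b \leq d$ used above. A clean workaround is the projection approach of Section \ref{projections}: project $D$ from the point of largest multiplicity $p_1$ to reduce to a divisor on $Y^{r-1}_{s-1}$ and induct on $r$, terminating in the planar Max Noether inequality recorded in Section \ref{section planar}. Since the Cremona transformation commutes with projection, Cremona-reducedness is preserved under the inductive step, and the base case closes the argument. The numerical conditions behave predictably under projection, so the quadratic and linear relations propagate down to the planar case with matching shifts in $i$, allowing the classical Noether step to conclude.
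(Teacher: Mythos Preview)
Your proposal has a genuine gap at both the main step and the proposed workaround.

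The inequality $m_a+m_b\le d$ from intersecting with the line $\widetilde\ell_{ab}$ is valid for irreducible \emph{curves} (Lemma~\ref{irreducible}), but not for irreducible divisors in $\bbP^r$ with $r\ge 3$: an irreducible hypersurface can contain $\widetilde\ell_{ab}$ and still be irreducible, so $(D\cdot\widetilde\ell_{ab})=d-m_a-m_b$ may well be negative. For instance, a general quartic surface in $\bbP^3$ with a triple point at $p_1$ and a double point at $p_2$ is irreducible yet has $m_1+m_2=5>4=d$. So the bound $m_j\le d/2$ for $j\ge 2$ is unjustified, and with it the whole chain leading to ``$d\le 1$'' collapses. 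In fact, even if one grants $m_j\le d/2$, the inequality you sketch does not force $d\le 1$ in low dimension: carrying it through for $r=3$, $i=-1$ yields only $(d-2)(d+1)\ge 0$, which is no contradiction.

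The projection workaround does not rescue the argument. Section~\ref{projections} and the commutation with Cremona are stated and proved for \emph{curve} classes; there is no analogous operation taking an irreducible hypersurface $D\subset\bbP^r$ to an irreducible hypersurface in $\bbP^{r-1}$. Projection from $p_1$ maps $D$ dominantly onto $\bbP^{r-1}$ unless $D$ is a cone with vertex $p_1$, so you do not get a divisor on $Y^{r-1}_{s-1}$ to which you could apply induction. The assertion that the numerical conditions ``propagate with matching shifts in $i$'' is therefore without content.

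The paper itself does not give an argument here; it cites \cite[Theorem~0.4]{DP}, where the relevant Noether-type inequality is established for divisors by a different mechanism than intersecting with lines. If you want to repair your approach, you need either an inequality that genuinely constrains the multiplicities of an irreducible \emph{divisor} (e.g.\ via intersection with suitable higher-dimensional cycles or via the $(-1)$-Weyl hyperplanes themselves), or to follow the argument in \cite{DP} directly.
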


\begin{corollary}\label{div}
Assume $i\in \{-1,0,1\}$.
Let $D$ be an irreducible effective divisor
whose class is a numerical $(i)$-divisorial class.
Then $D$ is an $(i)$-Weyl hyperplane.
\end{corollary}

\subsection{Applications to Mori cone of curves}\label{appMoricone}

Let us introduce
$\mathcal{Z}_{\geq 0}$
as the cone of curve classes in $A^{r-1}(Y)$ 
that meet all numerical $(0)$-divisorial classes non-negatively.
Define $\mathcal{Z}_{\geq 0}\langle -1 \rangle$
to be the cone generated by $\mathcal{Z}_{\geq 0}$ and all $(-1)$-curves in $Y^r_s$.

Then we have the following inclusion of the Mori cone of curves on $Y$:

\begin{theorem}\label{cor}
The cone of classes of effective curves in $Y^r_s$ is a subcone of 
$\mathcal{Z}_{\geq 0}\langle -1 \rangle$.

\end{theorem}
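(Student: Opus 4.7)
The plan is to argue via reduction to irreducible curves and then a case analysis. Since $\mathcal{Z}_{\geq 0}\langle -1 \rangle$ is a convex cone and every effective curve class is a non-negative sum of classes of irreducible effective curves, it suffices to show that $[C]\in\mathcal{Z}_{\geq 0}\langle -1\rangle$ for every irreducible effective curve $C$ in $Y^r_s$.

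Fix such a $C$. If $[C]\cdot D\geq 0$ for every $(0)$-divisorial class $D$, then by definition $[C]\in\mathcal{Z}_{\geq 0}\subset\mathcal{Z}_{\geq 0}\langle -1\rangle$ and we are done. Otherwise there is a $(0)$-divisorial class $D_0$ with $[C]\cdot D_0<0$. By Corollary \ref{div}, $D_0$ is represented by an irreducible effective divisor (a $(0)$-Weyl hyperplane), so the strict negativity forces $C$ to lie in the support of $D_0$. Next, applying a Weyl element, which preserves effectivity and permutes both the classes of $(-1)$-curves and of $(0)$-divisorial classes among themselves, I may assume $D_0$ is the strict transform of a standard hyperplane $\Pi\subset\bbP^r$ through $p_1,\dots,p_{r-1}$; then $\Pi\cong\operatorname{Bl}_{r-1}\bbP^{r-1}$ and, by genericity of the points, $\Pi$ is disjoint from $E_r,\dots,E_s$. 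Thus the class of $C$ in $A^{r-1}(Y^r_s)$ has the form $dh-\sum_{i=1}^{r-1} m_ie_i$.

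The crux of the argument is a structural claim about an irreducible curve $C\subset\Pi$: I would decompose $[C]$ as a non-negative combination $[C]=\sum_j n_j[\ell_j]+[c']$, in which each $\ell_j$ is a line in $\Pi$ joining a pair of the points $p_i$ (each of which is a $(-1)$-curve of $Y^r_s$ by Lemma \ref{generalcurve}, since $(-K_Y)\cdot(h-e_i-e_j)=3-r$ matches the anticanonical degree of a $(-1)$-curve), and $[c']\in\mathcal{Z}_{\geq 0}$. The subtraction is guided by the negativity $[C]\cdot D_0<0$: one peels off copies of $[\ell_j]$ according to the excess multiplicities at pairs of the $p_i$, using the constraints of Lemma \ref{irreducible} to bound which subtractions keep the remainder effective. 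An induction on $\sum_i m_i$ (equivalently on $r$, with the base case $r=2$ being the planar analysis of Section \ref{section planar}, which is essentially a Zariski-decomposition statement) should produce the required decomposition.

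The main obstacle will be this last step: one must verify that the peeling procedure terminates with a remainder that actually lies in $\mathcal{Z}_{\geq 0}$, rather than merely producing a class that has traded the old negative intersection with $D_0$ for a new negative intersection with a different $(0)$-divisorial class. Carrying this out cleanly requires showing that at each stage the chosen $[\ell_j]$ strictly decreases a suitable complexity (such as $\sum_i m_i$ or the total negativity $-\sum_D \min(0,[C]\cdot D)$ summed over $(0)$-divisorial classes $D$) and that the algorithm does not loop, which is where Lemma \ref{irreducible} and the irreducibility of the $(0)$-Weyl hyperplanes furnished by Corollary \ref{div} do the essential work.
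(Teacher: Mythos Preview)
The paper states this theorem but gives no proof for it; there is therefore nothing to compare your argument against. That said, your proposal is a sketch rather than a proof, and the main step is genuinely missing.

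First, a small point: your appeal to Corollary~\ref{div} is not quite right. That corollary says that an \emph{irreducible effective} divisor whose class is a numerical $(0)$-divisorial class is a $(0)$-Weyl hyperplane; it does not assert that every numerical $(0)$-divisorial class is represented by an irreducible effective divisor. You would need a separate argument (or a separate result from the paper) to know that the $D_0$ with $[C]\cdot D_0<0$ is actually a $(0)$-Weyl hyperplane.

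Second, and more seriously, the heart of your argument --- the ``peeling'' decomposition of $[C]$ into $(-1)$-curve classes plus a remainder in $\mathcal{Z}_{\geq 0}$ --- is not carried out. You yourself flag this (``The main obstacle will be this last step''), and the difficulty is real. For instance, in $Y^4_s$ the conic through $p_1,p_2,p_3$ has class $2h-e_1-e_2-e_3$; it is irreducible, it is \emph{not} a $(-1)$-curve (since $(-K_Y)\cdot(2h-e_1-e_2-e_3)=1\neq 3-r$), and it meets the $(0)$-Weyl hyperplane $H-E_1-E_2-E_3$ in degree $-1$. So one really does need to subtract a $(-1)$-curve class such as $h-e_1-e_2$ and then argue that the remainder $h-e_3$ lies in $\mathcal{Z}_{\geq 0}$; but checking this last membership means checking non-negativity against \emph{all} $(0)$-divisorial classes, not just the hyperplane you started with, and you give no mechanism that controls this. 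The suggested complexity measures (total multiplicity, or total negativity against $(0)$-classes) are plausible but you have not shown either one strictly decreases, nor that the process terminates in $\mathcal{Z}_{\geq 0}$ rather than cycling among different negative intersections.

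In short: the strategy is reasonable, but as written this is an outline with its key lemma left open, and the paper itself offers no proof to fill the gap.
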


\section{$(i)$-curves in Mori Dream Spaces $Y^r_s$, $r\geq 3$}\label{-1curvesMDS}
We now turn our attention to $(i)$-curves in $Y^r_s$ with $r\geq 3$,
with attention on the Mori Dream Space cases of (\ref{mds});
for $r \geq 3$ these cases are $s \leq r+4$ for $r = 3,4$ and $s \leq r+3$ for $r\geq 5$.
We note that in all these cases, Theorem \ref{ConjectureProofTheorem} holds,
and every $(i)$-Weyl line is an $(i)$-curve for each $i \in \{-1,0,1\}$.

We will assume that $s \geq r+2$;
for lower values of $s$ there is at most one Cremona transformation,
which commutes with the symmetric group,
and the situation is easy to analyze.

For $s=r+2$ it is elementary to see that
the only $(-1)$-Weyl lines are the lines through two of the points.
The only $(0)$-Weyl lines are the lines through one of the points,
and the conics through all $r+2$ points;
the latter only happens for $r=2$ though, since a conic must live in a plane.
The only $(1)$-Weyl lines are the general lines,
and again conics through $r+1$ points (only in the case $r=2$ again).

This case $s=r+2$ is satisfactorily treated in Proposition \ref{cremona},
and shows that there are no numerical $(i)$-classes other than the ones noted above.

We will concentrate therefore on the cases $s \geq r+3$. We  will introduce here the terminology that we will use throughout the section.

We will denote by $c=(d;m_1,\ldots,m_s)_r$ a class in $A^{r-1}$
and we will employ exponential notation to indicate repeated multiplicities.

\begin{definition} We say that the class $c \in  A^{r-1}(Y)$ with $d \geq 2$ 
is \emph{Cremona reduced} if 
the multiplicities are arranged in decreasing order and
$d \geq m_1 + m_2 + \dots + m_{r+1}$.
\end{definition}

The inequality above is the condition that
no Cremona transformation centered at $r+1$ of the points
will reduce the degree of the class.
(In the case of the line through two points, the Cremona transformation contracts it.)

We recall that the canonical divisor is fixed under the Weyl group action,
and that irreducibility and effectivity of curves of degree at least 2
is preserved under the Weyl group action.
Throughout this section we will use the following observation:
\begin{remark}\label{reduced}
If  $c\in A^{r-1}$ is the class of
an effective and irreducible curve $C$ of degree at least $2$, 
then we can reduce $C$ to an Cremona reduced effective, irreducible curve
with class $f\in A^{r-1}$  and $(K_Y \cdot c) =(K_Y \cdot f)$.
\end{remark}

This focuses our attention on the Cremona reduced classes.
The following will be useful in our analyses.

\begin{lemma}\label{CRnumjclass}
Suppose $j \in \{-1,0,1\}$ and that $c$ is a Cremona reduced numerical $(j)$-class
with positive degree and non-negative multiplicities.
Then
\[
(r-1)\sum_{i=r+2}^s m_i \geq -2 - j(r-1)+2\sum_{i=1}^{r+1}m_i.
\]

\end{lemma}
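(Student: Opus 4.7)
The plan is to derive the inequality by combining two numerical facts: the defining equation of a numerical $(j)$-class (which relates the degree $d$ to the sum of the multiplicities), and the Cremona reduced inequality $d \geq m_1 + \cdots + m_{r+1}$.

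First, I would unpack what it means for $c = dh - \sum_i m_i e_i$ to be a numerical $(j)$-class. Using Proposition~\ref{antican}(f) and (g), the intersection pairing $(K_Y \cdot c)$ computes directly as
\[
(K_Y \cdot c) = \bigl(-(r+1)H + (r-1)\sum_i E_i\bigr)\cdot\bigl(dh - \sum_i m_i e_i\bigr) = -(r+1)d + (r-1)\sum_{i=1}^s m_i.
\]
Setting this equal to $-2 - j(r-1)$ yields the key identity
\[
(r+1)d = (r-1)\sum_{i=1}^s m_i + 2 + j(r-1).
\]

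Second, since $c$ is Cremona reduced (and the multiplicities are listed in decreasing order), we have $d \geq \sum_{i=1}^{r+1} m_i$, and multiplying through by $r+1$ gives $(r+1)d \geq (r+1)\sum_{i=1}^{r+1} m_i$. Substituting the identity from the previous step on the left-hand side gives
\[
(r-1)\sum_{i=1}^s m_i + 2 + j(r-1) \geq (r+1)\sum_{i=1}^{r+1} m_i.
\]

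Finally I would split the sum $\sum_{i=1}^s = \sum_{i=1}^{r+1} + \sum_{i=r+2}^s$, cancel $(r-1)\sum_{i=1}^{r+1} m_i$ from both sides to leave $(r+1)-(r-1)=2$ copies of $\sum_{i=1}^{r+1} m_i$ on the right, and rearrange to obtain
\[
(r-1)\sum_{i=r+2}^s m_i \geq -2 - j(r-1) + 2\sum_{i=1}^{r+1} m_i,
\]
which is the desired conclusion. The proof is essentially a one-line calculation once the two defining conditions are written down, so there is no real obstacle; the only point requiring care is bookkeeping the signs in the intersection pairing $(E_i \cdot e_j) = -\delta_{ij}$ when computing $(K_Y \cdot c)$, and noting that the hypotheses on positive degree and non-negative multiplicities are not actually needed for the inequality itself (they ensure the statement is non-vacuous and that the Cremona reduced condition makes sense).
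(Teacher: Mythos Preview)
Your proof is correct and follows essentially the same approach as the paper: both combine the numerical $(j)$-class identity $(r+1)d - (r-1)\sum_i m_i = 2 + j(r-1)$ with the Cremona reduced bound $d \geq \sum_{i=1}^{r+1} m_i$, then split off the tail sum and cancel. Your additional remark that the positivity hypotheses are not used in the inequality itself is accurate.
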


\medskip\noindent{\bf Proof:}
Write $c = (d; \underline{m})$ with multiplicities in decreasing order.
The numerical condition is that
\[
(r+1)d - (r-1)\sum_{i=1}^s m_i = 2+j(r-1)
\]
and therefore
\[
(r-1)\sum_{i=1}^s m_i = -2-j(r-1)+(r+1)d \geq -2 - j(r-1)+ (r+1)\sum_{i=1}^{r+1}m_i
\]
using the Cremona reduced assumption.  Subtracting $(r-1)\sum_{i=1}^{r+1}m_i$
from both sides gives
\[
(r-1)\sum_{i=r+2}^s m_i \geq -2 - j(r-1)+2\sum_{i=1}^{r+1}m_i.
\]

\rightline{QED}

\subsection{The case $s = r+3$}\label{iclasses}
We first address the case of $s = r+3$ with $r\geq 2$.
It is an easy computation to show that the only $(-1)$ Weyl lines in Mori Dream Spaces and $Y^5_9$
are the lines through two points and the rational normal curves of degree $r$ through all $r+3$ points:
 (and all permutations). To see this, one computes from the bottom up:
applying Cremona transformations to these classes gives these classes back,
no matter which $r+1$ points one chooses.  In particular, on these spaces the $(-1)$-curves are the $\binom{s}{2}$ lines through two points and rational normal curves of degree $r$ passing through $r+3$ points. Similarly, the $(0)$-Weyl lines are just the lines through one of the points,
and the rational normal curves through all but one of the points.

\begin{proposition}\label{cremona}
Let $j\in \{-1, 0\}$, $r \geq 3$, and $s\leq r+3$. Let $C$ be an irreducible curve in $Y^r_s$.
Then $[C]\in A^{r-1}$ is a numerical $(j)$-class if and only if $C$ is an $(j)$-Weyl line.
\end{proposition}

\medskip\noindent{\bf Proof:}
Since the class of every $(j)$-Weyl line is a numerical $(j)$-class,
it suffices to prove the other direction.

Let $c=[C]$ have the form $d h - \sum_i m_i e_i$;
if $d=1$ we are done.  We assume that $d \geq 2$
and with multiplicities ordered decreasingly.

By Remark \ref{reduced} we can assume that $c$ is Cremona reduced,
and that it satisfies the inequalities $d \geq m_i \geq 0$
since $C$ is irreducible.

Now Lemma \ref{CRnumjclass} gives that

\[
(r-1)(m_{r+2}+m_{r+3}) \geq 2 \sum_{i=1}^{r+1} m_i +j(1-r)-2 = 2 \sum_{i=3}^{r+1} m_i + 2m_{1}+2m_{2}+j(1-r)-2 
\]
and since $\sum_{i=3}^{r+1} m_i \geq (r-1)m_{r+2} \geq (r-1)m_{r+3}$,
the right-hand side of the above is at least
$(r-1)(m_{r+2}+m_{r+3}) + 2m_{1}+2m_{2}+i(1-r)-2$.
Subtracting  $(r-1)(m_{r+2}+m_{r+3})$ from both sides now gives
\[
0 \geq 2m_{1}+2m_{2}+j(1-r)-2.
\]

If $j=-1$, this says that $3-r \geq 2m_1+2m_2$,
which implies all multiplicities are non-positive if $r\geq 3$,
a contradiction if $d\geq 2$.
(If $r=2$ we must have $2m_1+2m_2 \leq 1$
so again we must have all multipliicties $m_i=0$,
which again gives a contradiction:
we would then have to have $3d = 1$ and this is impossible.)

If $j=0$ then we have $2(m_1+m_2) \leq 2$,
and therefore $m_1=1$ and $m_2=0$.
This forces $d=1$, and we have the $(0)$-Weyl line.

\rightline{QED}

It is tempting to try to use this proposition to conclude that,
in these ranges of parameters,
all numerical $(-1)$-classes are $(-1)$-Weyl classes.
However this is false:
we may indeed be able to reduce the degree,
but the result may be a class whose degree becomes negative
(or a multiplicity becomes negative).

The example below shows that the \textit{irreducibility} assumption is important:
it ensures that Cremona transforms stay with non-negative parameters.

\begin{example}
Consider the example of the class $(13;4,3^6)_4$
 of degree $13$ in $\bbP^4$ with one point of multiplicity $4$
and six points of multiplicity $3$.
This is an effective class in $\bbP^4$ with $7$ points
and $(-K\cdot C) = 5 \cdot 13 - 3\cdot (4 + 3 \cdot 6) = -1$ so $C$ is a numerical $(-1)$-curve.
However it is not a Weyl class because it is not irreducible.
Applying a Cremona transformation to five of the points including the multiplicity $4$ point
gives the class $(4;3^2,1,0^4)_4$
which cannot be the class of an irreducible curve,
by Lemma \ref{irreducible}.
Indeed, any curve $C$ having this class must contain the line through the two triple points,
and the residual class is $(3;2^2,1,0^4)_4$,
which also must contain that line;
the second residual is $(2;1^3,0^4)_4$
which is the class of a net of conics in the plane spanned by the three points of multiplicity one.
\end{example}

We note that for the example above, $c:=(13;4,3^6)_4$ then $\langle c, c \rangle = -41$
which is not equal to $3-2r = -5$.
Hence it is not a $(-1)$-Weyl line class using this criterion, either.

We can replace the assumption that the class comes from an irreducible curve
by using the quadratic invariant $\langle c,c\rangle$ in this case,
and obtain a purely numerical condition for a class to be a $(-1)$-Weyl line class:

\begin{proposition}\label{rplus3points}
Suppose $r\geq 3$ and $s = r+3$.
Let $c \in A^{r-1}$ be a class with positive degree and non-negative multiplicities.
Then $c$ is a $(-1)$-Weyl line class if and only if
\[
\langle c, F \rangle = 3-r \text{ and } \langle c,c\rangle = 3-2r.
\]
\end{proposition}

\medskip\noindent{\bf Proof:}
Since the bilinear form and the class $F$ are invariant under the Cremona transformations,
one implication holds, since those are the values for a line through two points.
Therefore it is enough to prove that a class with the given quadratic and linear invariants
is a $(-1)$ Weyl line.
Let $c = (d;\underline{m})_r$ be such a class, and write
$N_1 = \sum_i m_i$ and $N_2 = \sum_i m_i^2$.
Our assumptions are then that $(r+1)d-(r-1)N_1 = 3-r$
and $d^2-(r-1)N_2 = 3-2r$.

We first claim that $d \leq r$.
For a fixed degree $d$, the quantity $N_1$ is then determined,
namely $N_1 = ((r+1)d + (r-3))/(r-1)$.
If all of the multiplicities are equal,
then each one of them would be equal to $m=N_1/(r+3)$.
The value of $N_2$ would be minimized if all multiplicities were equal, i.e.,
$N_2 \geq \sum_{i=1}^{r+3} m^2 = (r+3)(N_1/(r+3))^2 = N_1^2/(r+3)$.
Therefore
\begin{align*}
3-2r &= d^2-(r-1)N_2 \leq d^2 - (r-1) N_1^2/(r+3) \\
&= d^2 - \frac{r-1}{r+3}(((r+1)d + (r-3))/(r-1))^2 \\
&= d^2 - \frac{1}{(r+3)(r-1)} ((r+1)^2 d^2 + 2(r+1)(r-3) d + (r-3)^2) \\
&= \frac{1}{(r+3)(r-1)} ( ((r+3)(r-1)-(r+1)^2)d^2 - 2(r+1)(r-3) d - (r-3)^2 ) \\
&= \frac{-1}{(r+3)(r-1)} ( 4d^2 + 2(r+1)(r-3) d + (r-3)^2 ).
\end{align*}
Now the upper bound quantity here when $d=r$ is
$\frac{-1}{(r+3)(r-1)} (4r^2 + 2r(r+1)(r-3) + (r-3)^2 )
= \frac{-1}{(r+3)(r-1)}((r-1)(r+3)(2r-3) = 3-2r$
and the quadratic function in $d$ only increases for $d \geq r$,
so that if $d > r$, then $ \langle c, c \rangle < 3-2r$ which is a contradiction.
Hence we conclude that $d \leq r$.

Since all the multiplicities are non-negative integers,
we have $m_i \leq m_i^2$ for every $i$,
so that $N_1 \leq N_2$ (with equality only if all multiplicities are $0$ or $1$).
Since $(r-1)N_1 = (r+1)d+r-3$ and $(r-1)N_2 = d^2+2r-3$, we conclude that
$(r+1)d+r-3 \leq d^2+2r-3$,
which simplifies to $(d-1)(d-r) \geq 0$.
Hence $d$ cannot lie in the open interval $(1,r)$,
and so the only possibilities are narrowed down to $d=r$ and $d=1$.

If $d=r$ then the values for $ \langle c, F \rangle$ and  $\langle c, c \rangle$ imply that $N_1 = N_2 = r+3$.
Hence all multiplicities are either $0$ or $1$; since the sum is the number of points $r+3$, they must all be one, and we have the class of the rational normal curve.

Similarly if $d=1$ we must have $N_1 = N_2 = 2$, and this leads to the line through two points.

\rightline{QED}

We can summarize the situation for irreducible $(-1)$- and $(0)$-curves 
when $s = r+3$ in the following.

\begin{theorem}\label{class2} Let $i\in \{-1,0\}$, $r \geq 3$, 
and  $c\in A^{r-1}(Y^{r}_{r+3})$ be the class of an irreducible curve.  
The following are equivalent:
\begin{enumerate}
\item $c$ is the class of a $(i)$-curve.
\item $c$ is a $(i)$-Weyl line class.
\item $\langle c, c \rangle=1+(i-1)(r-1)$ and $\langle c, F \rangle =2+i(r-1)$.
\item $c$ is a numerical $(i)$-class.
\end{enumerate}
\end{theorem}

\begin{proof}
The invariance of the bilinear form gives us that (2) implies (3),
and we always have that either (1) or (3) implies (4).
Theorem \ref{ConjectureProofTheorem} proves that (2) implies (1).

Since $s=r+3$, Proposition \ref{cremona} proves that (4) implies (2),
which completes the equivalencies.
\end{proof}

We study now $(1)$-curves in Mori Dream spaces with $s = r+3$
 and we recall the anticanonical curve class $F:=(r+1)h-\sum_{i=1}^{s}e_i$.

\begin{proposition}\label{icremona}
Let $r \geq 3$, $s=r+3$, and let $C$ be an irreducible curve in $Y^r_s$.
Then $c=[C]\in A^{r-1}$ is a numerical $(1)$-class if and only if
$C$ is a $(1)$-Weyl line
or $2|r+1$ and $c$ is in the Weyl orbit of a class of the form
$c = mF + c'$
where $m \leq (r+1)/4$ and $c' = (e;\underline{n})$
where $e=(r+1)/2 - 2m$, $n_i = 0$ for $i \geq r-1$, and $e = \sum_i n_i$.

\end{proposition}

\begin{proof}
We first remark that one direction is clear.
Certainly if $C$ is a $(1)$-Weyl line,
we have the numerical condition.
Suppose $c$ has the form $mF+c'$ as in the statement.
If we write $c = (d;\underline{m})$,
then $d = (r+1)m+\sum_i n_i = (r+1)m+e$
and $m_i = m+n_i$ for each i.
Hence
\begin{align*}
(-K \cdot c) &= (r+1)d-(r-1)\sum_i m_i \\
&= (r+1)^2m + (r+1)\sum_i n_i - (r-1)((r+3)m+\sum_i n_i) \\
&= 4m + 2\sum_i n_i  = 4m+2e = r+1
\end{align*}
as required.

To prove the other direction, we assume we have the numerical condition,
and that the class $c$ is Cremona reduced with decreasing multiplicities.
If $d=1$ then the only possibility is the class $h$,
so we may assume $d\geq 2$,
and then we must have $m_1 \geq 1$ as well.

We will prove in this case that $c = mF+c'$ as in the statement of the Proposition.
The condition for a numerical $(1)$-class is
\[
\langle c, F \rangle = (-K \cdot c) =  (r+1)d-(r-1)\sum_{i=1}^{r+3} m_i = r+1
\]
which can be written as
\begin{equation}\label{num(1)class}
(r-1)(d-1-\sum_{i=1}^{r+1} m_i) + (d-1-(r-1)m_{r+2})+(d-1-(r-1)m_{r+3})=0.
\end{equation}

Since $c$ is Cremona reduced we have $d \geq \sum_{i=1}^{r+1} m_i$
which is at least $m_1+m_2+(r-1)m_{r+2}$ since the multipliciities decrease.
Therefore in (\ref{num(1)class}),
the last two parenthesis are non-negative
and therefore $d-1-\sum_{i=1}^{r+1}m_i$ is non-positive.
However this forces $t =d-\sum_{i=1}^{r+1}m_i \in \{0,1\}$
since $c$ is Cremona reduced.

We distinguish two cases\\
\medskip\noindent \textbf{Case $t=1$}

Now (\ref{num(1)class}) gives that all three terms are zero,
and hence we must have $d=1+(r-1)m_{r+2}=1+(r-1)m_{r+3}$.

We have
\[
0=t-1 = d-1 - \sum_{i=1}^{r-1} m_i -(m_r+m_{r+1})
\leq d-1 -(r-1)m_{r+2} -(m_r+m_{r+1}) = -(m_r+m_{r+1})
\]
which forces $m_r=0$, hence $m_{r+2}=0$ also, so that $d=1$,
a contradiction.

\medskip\noindent \textbf{Case $t=0$} 

In this case  (\ref{num(1)class}) implies
\[
(\sum_{i=1}^{r+1} m_i -(r-1) m_{r+2}) + (\sum_{i=1}^{r+1} m_i -(r-1)m_{r+3}) = r+1
\]
which can be reorganized as
\begin{equation}\label{equua}
\sum_{i=1}^{r-1}(2m_i-m_{r+2}-m_{r+3})+2m_r+2m_{r+1} =r+1.
\end{equation}

Since the $m_i$'s are decreasing,
all the terms in the sum are non-negative, and decreasing.
Hence the last one, $2m_{r-1} - m_{r+2}-m_{r+3}$ is the smallest.
We distinguish three cases:

\medskip\noindent{\bf Case A:}  $2m_{r-1} - m_{r+2}-m_{r+3} \geq 2$.
Now
\[
r+1 = 2m_r+2m_{r+1} + \sum_{i=1}^{r-1}(2m_i-m_{r+2}-m_{r+3})
\geq 2(r-1)
\]
which forces $r=3$, $m_r=m_{r+1}=0$,
and $2m_i-m_{r+2}-m_{r+3}=2$ for all $i \leq r-1$.
Since $m_r=0$, so are $m_{r+2}$ and $m_{r+3}$,
and hence we have $m_i=1$ for all $i \leq r-1$, all other $m_j=0$.
Now $t=0$ gives $d=2$ and we have $c = (2;1^2,0^4)_3$
which is not a numerical $(1)$-class.

\medskip\noindent {\bf Case B:} $2m_{r-1} -m_{r+2}-m_{r+3} = 1$.
In this case the sum in (\ref{equua}) is at least $r-1$, so that
we must have $2m_r+2m_{r+1} \leq 2$.
This forces $m_{r+1}=0$, and hence $m_{r+2}=m_{r+3}=0$ as well.
But then our Case B assumption gives $2m_{r-1}=1$, an impossibility.

\medskip\noindent {\bf Case C:} $2m_{r-1}-m_{r+2}-m_{r+3}=0$.
In this case the decreasing order implies that $m_{r-1}=m_{r+2}=m_{r+3}$,
so in fact $m_{r-1}=m_r=m_{r+1}=m_{r+2}=m_{r+3}$; call this value $m$.
We then have $m_i \geq m$ for every $i$,
so we may write  $m_i = m + n_i$ for decreasing non-negative integers $n_i$,
with $n_i = 0$ for $i \geq r-1$.
Now (\ref{equua}) becomes
\[
\sum_{i=1}^{r-2} n_i + 2m = (r+1)/2
\]
forcing $r$ to be odd.

Since $t=0$ we have $d = \sum_{i=1}^{r+1}m_i = (r+1)m+\sum_{i=1}^{r-2} n_i$.
Hence if we define the class $c' = (e;\underline{n})_r$ with $e = \sum_{i=1}^{r-2}n_i$,
then we have $c = mF + c'$,
where $F$ is the anticanonical curve class $(r+1; 1^s)$.
This is the other case of the statement.
\end{proof}

\subsection{The case $s = r+4$}\label{r+4}

With $r+4$ points a parallel approach works up to $\bbP^5$,
at least for the $(-1)$-curve case:

\begin{proposition}\label{-1classr+4}
Suppose $3 \leq r\leq 5$ and $s=r+4$.
Let $C$ be an irreducible curve in $Y^r_s$.
Then $[C]$ is a numerical $(-1)$-class if and only if $C$ is a $(-1)$-Weyl line.
\end{proposition}

\medskip\noindent{\bf Proof:}
It suffices to show that if $c$ is a numerical $(-1)$-class
of the form $d h- \sum_i m_i e_i $ with decreasing multiplicities
representing an irreducible curve
with $d \geq 2$, then $c$ is not Cremona reduced.
Assume by contradiction that $c$ is Cremona reduced;
then Lemma \ref{CRnumjclass} applies and we have

\[
(r-1)(m_{r+2}+m_{r+3}+m_{r+4}) \geq (r-3) + 2\sum_{i=1}^{r+1}m_i.
\]
Now the left hand side is at most $3(r-1)m_{r+1}$
and the right and side is at least $(r-3)+2(r+1)m_{r+1}$, so that
$
3(r-1)m_{r+1} \geq (r-3)+2(r+1)m_{r+1}
$
or $(r-5)m_{r+1} \geq r-3$.

For $r=4,5$ this is an immediate contradiction since $m_{r+1} \geq 0$.
For $r=3$ we must have $-3m_4\geq 0$, and hence $m_4=0$;
then the original inequality
gives $0 \geq 2(m_1+m_2+m_3+m_4)$, forcing all multiplicities equal to zero,
a contradiction.

\rightline{QED}

We cannot extend the above statement for $r > 5$:

\begin{example}\label{counterex} 
Consider $\bbP^6$ with $s=10$ points, and the class $c=3F=(21;3^{10})_6$.
This is a Cremona reduced numerical $(-1)$-class, and hence is not a $(-1)$-Weyl line class.
\end{example}

\begin{remark}
For $s=r+4$ we are in a similar situation:
the only $(-1)$-Weyl lines are the proper transforms of the lines through two points
and the proper transform of the rational normal curve of degree $r$ through $r+3$ of the points.
There are $r+4 \choose 2$ such lines and $r+4$ RNCs.
However, for $r\geq 6$ there are $(-1)$-curves that are not $(-1)$-Weyl classes.
\end{remark}

For dimensions at most $5$ and $s=r+4$ points, we have the same result as for $s=r+3$,
parallel to Proposition \ref{rplus3points}:

\begin{proposition}\label{rplus4points}
Suppose $r\leq 5$ and $s = r+4$.
Let $c \in A^{r-1}$ be a class with positive degree and non-negative multiplicities.
Then $c$ is a $(-1)$-Weyl line class if and only if
\[
\langle c, F \rangle = 3-r \text{ and } \langle c,c \rangle = 3-2r.
\]
\end{proposition}

\medskip\noindent{\bf Proof:}
The proof parallels that of the previous Proposition \ref{rplus3points};
it suffices to prove that a class
with given linear and quadratic invariant is a $(-1)$-Weyl line.
We use the same notation:
let $c = (d;\underline{m})_r$ be such a class, and write
$N_1 = \sum_i m_i$ and $N_2 = \sum_i m_i^2$ as above.

For a fixed degree $d$, the quantity $N_1$ is again fixed to be
$N_1 = ((r+1)d + (r-3))/(r-1)$.
Again $N_2$ would be minimized if all multiplicities were equal (to $m=N_1/(r+4)$), so that
$N_2 \geq \sum_{i=1}^{r+4} m^2 = (r+4)(N_1/(r+4))^2 = N_1^2/(r+4)$.
Therefore
\begin{align*}
3-2r &= d^2-(r-1)N_2 \leq d^2 - (r-1) N_1^2/(r+4) \\
&= d^2 - \frac{r-1}{r+4}(((r+1)d + (r-3))/(r-1))^2 \\
&= d^2 - \frac{1}{(r+4)(r-1)} ((r+1)^2 d^2 + 2(r+1)(r-3) d + (r-3)^2) \\
&= \frac{1}{(r+4)(r-1)} ( ((r+4)(r-1)-(r+1)^2)d^2 - 2(r+1)(r-3) d - (r-3)^2 ) \\
&= \frac{-1}{(r+4)(r-1)} ( (5-r)d^2 + 2(r+1)(r-3) d + (r-3)^2 ).
\end{align*}

For $r=2$ this gives $-1 \leq \frac{-1}{6}(3d^2-6d +1)$
or $3d^2-6d-5 \leq 0$, which forces $d \leq 2$ and leads to the line and the conic.

For $r=3$ this gives $-3 \leq \frac{-1}{14}(2d^2)$
or $d^2 \leq 21$, giving $d \leq 4$.
Now in this case since $-3  = d^2-2N_2$,
the degree cannot be even.  Hence $d = 1$ or $d=3$,
leading again to the line or the twisted cubic which is the RNC.

For $r=4$ the inequality is $-5 \leq \frac{-1}{24}(d^2 + 10d + 1)$,
forcing $d \leq 7$.
However the $\langle c, F\rangle_{r-1}$ value gives us $5d-3N_1 = -1$, so that $d$ must be $1\;\mod 3$.
If $d=7$ then $N_1 = 12$ and $\frac{-1}{24}(d^2 + 10d + 1) = -5$ so that the inequalities are equalities,
and all multiplicities are in fact equal, to $3/2$, which is not possible.
Hence $d=1$ or $d=4$ and we have the line or the RNC again.

Finally when $r=5$ the inequality is $-7 \leq \frac{-1}{36}{24d + 4}$ giving $d \leq 9$,
and $\langle c,F\rangle_{r-1}= -2$ forces $d$ to be odd.

If $d=9$ then the equations give $N_1=14$ and $N_2 = 22$.
Again, for fixed $N_1$, $N_2$ is minimized by having all multiplicities as equal as possible,
and for $r+4=9$ points we have that $N_2$ is minimized with multiplicities $(2^5,1^4)$ with sum equal to $14$.
However for this set of multiplicities $N_2 = 24$, and so $N_2=22$ is impossible.

If $d=7$ then again $N_1=11$ and $N_2=14$; for this $N_1$, $N_2$ is minimized with multiplicities $(2^2,1^7)$
but this gives $N_2 = 15$.

If $d=3$ this implies $N_1=5$ and $N_2 = 4$, a contradiction since $N_2 \geq N_1$ always.

If $d=5$ we have the RNC, and if $d=1$ we have the line.

\rightline{QED}

\begin{proposition}\label{CRnum0classrleq5}
Suppose $r \leq 5$ and $s=r+4$.
Let $c$ be a Cremona reduced numerical $(0)$-class with nonnegative multiplicities.
Then $c$ is either the line class $h-e_1$;
or $r=3$, $s=7$, and $c = F = (4;1^7)$; 
or $r=4$, $s=8$, and $c=2F=(10;2^8)_4$.
\end{proposition}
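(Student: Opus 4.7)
The plan is to combine Lemma \ref{CRnumjclass} with the Cremona reduced hypothesis to constrain the multiplicities tightly, and then to carry out a case analysis on the dimension $r\in\{2,3,4,5\}$. Specializing Lemma \ref{CRnumjclass} to $j=0$ and $s=r+4$ gives
\[
(r-1)(m_{r+2}+m_{r+3}+m_{r+4}) \;\geq\; -2 + 2\sum_{i=1}^{r+1} m_i.
\]
Since the multiplicities are in decreasing order, the left-hand side is at most $3(r-1)m_{r+1}$ and the right-hand side is at least $-2+2(r+1)m_{r+1}$, so
\[
(r-5)\,m_{r+1} \;\geq\; -2.
\]
For $r\leq 4$ this forces $m_{r+1}$ to be small: $m_3=0$ when $r=2$, $m_4\leq 1$ when $r=3$, and $m_5\leq 2$ when $r=4$.

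Next I would treat each dimension in turn. In dimension $r=2$, the vanishing $m_3=\cdots=m_6=0$ together with the numerical condition $3d-(m_1+m_2)=2$ and the Cremona reduced inequality $d\geq m_1+m_2$ forces $d\leq 1$, giving the line class $h-e_1$. In dimension $r=3$, the subcase $m_4=0$ reduces similarly to the line; in the subcase $m_4=1$ one has $m_1,m_2,m_3\geq 1$, and combining $2d-\sum m_i=1$ with the Cremona reduced bound $d\geq m_1+m_2+m_3+1$ forces $m_1+m_2+m_3\leq 3$, hence all seven multiplicities equal to $1$ and $d=4$, yielding $c=F=(4;1^7)$. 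In dimension $r=4$, the three subcases $m_5\in\{0,1,2\}$ either reduce to the line or are ruled out by a divisibility obstruction coming from $5d-3\sum m_i=2$, except $m_5=2$, which forces $m_1=\cdots=m_8=2$ and $d=10$, yielding $c=2F=(10;2^8)_4$.

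The hard part will be the case $r=5$, where the inequality $(r-5)m_{r+1}\geq -2$ degenerates to $0\geq -2$ and therefore supplies no direct information. The plan here is to apply Lemma \ref{CRnumjclass} with the sharper lower bound $\sum_{i=1}^{r+1} m_i \geq m_1 + r\, m_{r+1}$, which yields $m_{r+1}\geq m_1-1$; consequently every multiplicity must lie in $\{m_1,m_1-1\}$. Writing the class in the form with $k$ multiplicities equal to the top value $m$ and $9-k$ equal to $m-1$, one then substitutes into the numerical equation $6d-4\sum m_i=2$ and the Cremona reduced inequality $d\geq\sum_{i=1}^6 m_i$, and uses integrality and divisibility modulo $3$ to eliminate the possibilities beyond the line class. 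This is the step that I expect to require the most delicate bookkeeping: the arithmetic here is close enough to being borderline that na\"ive estimates alone allow additional Cremona reduced candidates to slip through, and controlling them is the crux of the argument.
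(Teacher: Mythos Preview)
Your approach for $r\leq 4$ is sound and essentially equivalent to the paper's, though organized differently: the paper rewrites the inequality from Lemma~\ref{CRnumjclass} as a sum of nonnegative differences bounded by a small constant, while you first extract a bound on $m_{r+1}$ and then branch on its value. Both routes work.

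The genuine gap is in your $r=5$ plan. From the sharpened bound $\sum_{i=1}^{6} m_i \geq m_1 + 5m_6$ together with the upper bound $4(m_7+m_8+m_9)\leq 12\,m_6$, you correctly deduce $m_6 \geq m_1 - 1$. But this only pins down $m_1,\ldots,m_6$ to the set $\{m_1,m_1-1\}$; it says nothing about $m_7,m_8,m_9$, which are bounded above by $m_6$ but have no lower bound from your inequality. Your subsequent parametrization ``$k$ multiplicities equal to $m$ and $9-k$ equal to $m-1$'' therefore assumes exactly what remains to be proved. To control the last three multiplicities you must return to the full inequality $4(m_7+m_8+m_9) \geq -2 + 2\sum_{i=1}^{6} m_i$ rather than its weakened form.

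The paper handles this cleanly by rewriting that inequality (after dividing by $2$) as
\[
(m_1-m_7)+(m_2-m_7)+(m_3-m_8)+(m_4-m_8)+(m_5-m_9)+(m_6-m_9) \;\leq\; 1,
\]
a sum of six nonnegative terms. This forces at most one term to equal $1$ and the rest to vanish, and the vanishing of $m_2-m_7$, $m_4-m_8$, $m_6-m_9$ immediately drags $m_7,m_8,m_9$ up to the common value. Your approach can be salvaged by inserting this step (or by an explicit case split on how many of $m_2,\ldots,m_6$ equal $m_1$ and feeding each case back into the full inequality), but as written the crucial constraint on $m_7,m_8,m_9$ is missing.
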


\medskip\noindent{\bf Proof:}
We start with Lemma \ref{CRnumjclass}.

For $r=5$ and $s=9$ this gives
$4\sum_{i=7}^9 m_i \geq -2 +2\sum_{i=1}^{6}m_i$
which we can re-write as
\[
(m_1-m_7)+(m_2-m_7)+(m_3-m_8)+(m_4-m_8)+(m5-m_9)+(m_6-m_9) \leq 1.
\]
Hence at most one of these non-negative differences is one.
If all are equal to zero, then all multiplicities are equal, say to $m$;
In this case the numerical condition gives that $6d-4\cdot 9m = 2$,
which is impossible.

Hence exactly one of these differences is one, all other five are zero.
We cannot have the second, fourth, or sixth equal to one,
since they are at most the first, third, and fifth respectively.
We conclude that these are zero, which implies that there exists $m$
such that $m_1=m+1$ and $m_i=m$ for $i \geq 2$.
Now the numerical condition gives $6d - 4\cdot (9m+1) = 2$,
implying that $d=6m+1$.
The Cremona reduced conditions then force $m=0$,
and we have the line class $h-e_1$.

For $r=4$ and $s=8$, we have
$3\sum_{i=6}^8 m_i \geq -2 + 2 \sum_{i=1}^5 m_i$
which we re-write as
\[
m_1 + (m_1-m_6)+ 2(m_2-m_6) + 2(m_3-m_7) + (m_4-m_7) + (m_4-m_8) + 2(m_5-m_8)
\leq 2.
\]
Since $m_1 \geq 1$, at most one of the differences here can be positive,
and all of the doubled ones must be zero.
Hence $m_2=m_6$, forcing all $m_i$ for $i\geq 2$ to be equal, say to $m$.
This then reduces to $2m_1-m \leq 2$, leading to having either $m_1=m \leq 2$
or $m_1 = 1$ and $m=0$.

If all $m_i$ are equal to $m$, then the numerical condition gives $5d-3\cdot 8m = 2$,
so we must have $d=10$, $m=2$ since $m \leq 2$.
In the other case we have the line through one point: $h-e_1$.

For $r=3$ and $s=7$ we we have $2\sum_5^7 m_i \geq -2 + 2\sum_1^4 m_i$,
which we may rewrite as
\[
2m_1 + 2(m_2-m_5)+2(m_3-m_6)+2(m_4-m_7) \leq 2
\]
and since $m_1$ is positive each of the differences above must be zero,
and $m_1=1$.
The two cases are then when $m_2=0$ (giving the line class $h-e_1$)
or when $m_2=1$
(which gives the $F$ class).

For $r=2$ and $s=6$ we have $\sum_4^6 m_i \geq -2 + 2\sum_1^3 m_i$
which also leads only to the line class $h-e_1$.

\rightline{QED}

We note that the exceptional case $c = (10;2^8)$ in $\bbP^4$ has
$\langle c,c\rangle = {10}^2-3\cdot 8 \cdot 2^2 = 4$,
and the other exceptional case $c = (4;1^7)$ in $\bbP^3$ has
$\langle c,c\rangle = {4}^2-2\cdot 7 \cdot 1^2 = 2$,
both of which are different from the $(0)$-Weyl line class values.
(which are $-2$ and $-1$, respectively).
Hence we have:

\begin{corollary}
\label{num0classNot2F}
Suppose $r \leq 5$ and $s=r+4$; a Cremona reduced class $c$ with nonnegative multiplicities is a $(0)$-Weyl line if and only if
$\langle c,c\rangle = 2-r$ and $\langle c,F\rangle = 2$.
\end{corollary}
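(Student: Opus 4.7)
The plan is to deduce Corollary \ref{num0classNot2F} essentially as a bookkeeping consequence of Proposition \ref{CRnum0classrleq5} together with a direct evaluation of the bilinear form on each candidate.

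First I would handle the easy direction. If $c$ is a $(0)$-Weyl line, then by $W$-invariance of $\langle -,- \rangle$ and of $F$, the two numerical invariants $\langle c,c\rangle$ and $\langle c, F\rangle$ take the same values they do on the representative $h-e_1$ of the Weyl orbit. A direct computation using the formula $\langle dh-\sum m_ie_i, d'h-\sum m_i'e_i\rangle = dd' - (r-1)\sum m_im_i'$ gives $\langle h-e_1, h-e_1\rangle = 1-(r-1) = 2-r$ and $\langle h-e_1, F\rangle = (r+1)-(r-1) = 2$.

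For the reverse direction, suppose $c$ is Cremona reduced with nonnegative multiplicities and satisfies $\langle c,F\rangle = 2$ and $\langle c,c\rangle = 2-r$. The condition $\langle c,F\rangle = 2 = 2 + 0\cdot(r-1)$ identifies $c$ as a numerical $(0)$-class, because $\langle c,F\rangle = (-K_Y\cdot c)$ by Proposition \ref{antican} and the definitions. Hence Proposition \ref{CRnum0classrleq5} applies and reduces the possibilities to exactly three classes: the line $h-e_1$, the anticanonical curve class $F=(4;1^7)$ in $Y^3_7$, and $2F=(10;2^8)$ in $Y^4_8$.

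The final step is to rule out the two exceptional cases using the hypothesis $\langle c,c\rangle = 2-r$. For $F=(4;1^7)$ in $Y^3_7$ one computes $\langle F,F\rangle = 16 - 2\cdot 7 = 2$, which is not equal to $2-r = -1$. For $2F=(10;2^8)$ in $Y^4_8$ one computes $\langle 2F,2F\rangle = 100 - 3\cdot 8\cdot 4 = 4$, which is not equal to $2-r=-2$. Thus only $h-e_1$ survives, proving the claim. There is no genuine obstacle here, since the hard work was already done in Proposition \ref{CRnum0classrleq5}; the only task is the straightforward arithmetic check that the self-intersection invariant $\langle c,c\rangle$ distinguishes the $(0)$-Weyl line from the two exceptional multiples of $F$.
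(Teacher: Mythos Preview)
Your proof is correct and follows essentially the same approach as the paper. The paper's justification is just the brief remark preceding the Corollary, which computes $\langle c,c\rangle = 4$ for the exceptional class $(10;2^8)_4$ and observes this differs from the value $2-r=-2$ for the $(0)$-Weyl line; your version is simply more complete, explicitly handling the forward direction via Weyl invariance and also checking the $F=(4;1^7)$ case in $Y^3_7$ (where $\langle F,F\rangle = 2 \neq -1$), which the paper leaves implicit.
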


In this case of $r+4$ points, for $r\geq 6$,
it is no longer the case that the two invariants
$ \langle c, F \rangle$ and  $\langle c,c \rangle$
pick out the Weyl classes.

\begin{example}\label{counterex2}
Consider again the class $c = 3F = (21;3^{10})_6$
of curves of degree 21 in $\bbP^6$ with ten points of multiplicity $3$.
We have $ \langle c, F \rangle= 7\cdot 21 - 5(10\cdot 3) =-3$ and
$\langle c,c \rangle_{r-1}=21^2-5*10*9=-9$
as is the case for the line through two points,
but this is not a $(-1)$ Weyl class.
In fact, the class $c$ is Cremona reduced, as well.
\end{example}

It is also true that there are counterexamples in $\bbP^3$, for larger number of points.
The class $c= (7;4,1^{10})$ has
$\langle c,F\rangle_{r-1} = 0$
and
$\langle c,c\rangle_{r-1} = -3$
as a $(-1)$-Weyl line does;
however it is not in the Cremona orbit of $h-e_1-e_2$.

\subsection{The proof of Theorem \ref{class} and Conclusions}

Using Proposition \ref{rplus3points} and Proposition \ref{rplus4points}, we have:

\begin{corollary}\label{MDSminus1}
If $r\geq 3$ and $Y=Y^r_s$ is a Mori Dream Space,
(i.e., $r=3,4$ and $s\leq r+4$ or $r \geq 5$ and $s \leq  r+3$)
the only classes $c\in A^{r-1}(Y)$ 
with positive degree and non-negative multiplicities
that satisfy the equations
$$ \langle c, F \rangle= 3-r \text{ and }  \langle c, c \rangle= 3-2r$$
are either the proper transform of a line through two points or the rational normal curve of degree $r$ through $r+3$ points.
\end{corollary}


\begin{proof}[Proof of Theorem \ref{class}] Assume $Y$ is $Y^5_9$ or is a Mori Dream Space.
The first part of the previous proof applies as well here.
For $(-1)$-curves the equivalence of conditions $(3)$ and $(2)$ in Theorem \ref{class}
follows again from Corollary \ref{MDSminus1}. 
 If $r=2$ the proof of Theorem  \ref{class} for the del Pezzo surfaces follows from Theorem \ref{Y2sirreducible}, Proposition \ref{planar} and Proposition \ref{del}.
 In higher dimensions, $r>2$, Proposition \ref{-1classr+4} proves that $(4)$ implies $(2)$, proving the equivalences $(1)$, $(2)$, and $(4)$. 
\end{proof}

\begin{corollary}\label{main theorem}
If $Y$ is Mori Dream Space and $r\geq 3$ or $Y=Y^5_9$
then the only $(-1)$-curves are the ones described in Example \ref{line}.
\end{corollary}

\begin{theorem}\label{class3}
Theorem \ref{class2} for $i=1$ 
holds  if $r$ is even.
\end{theorem}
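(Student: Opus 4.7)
The plan is to mirror the proof of Theorem \ref{class2} for the case $i=1$, with the parity hypothesis entering at one specific arithmetic step. The implications (2)$\Rightarrow$(1), (2)$\Rightarrow$(3), and (1)$\Rightarrow$(4) go through verbatim from the $i \in \{-1,0\}$ cases: (2)$\Rightarrow$(1) follows from Remark \ref{3 defintions}, (2)$\Rightarrow$(3) from Weyl-invariance of $\langle-,-\rangle$ (Proposition \ref{CremonaAY}(c)) together with the values $\langle h,h\rangle=1$ and $\langle h,F\rangle=r+1$, and (1)$\Rightarrow$(4) from the definition of a numerical $(i)$-class. Thus it suffices to establish (4)$\Rightarrow$(2); (3)$\Rightarrow$(2) then follows since (3) implies (4).

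For (4)$\Rightarrow$(2), I would run the Cremona-reduction algorithm used to prove Proposition \ref{cremona}. Given an irreducible curve $C$ with class $c=dh-\sum_{i=1}^{r+3} m_i e_i$, order the multiplicities so that $m_1\geq\cdots\geq m_{r+3}$. Whenever $d<\sum_{i=1}^{r+1}m_i$, apply $\phi_{\{1,\ldots,r+1\}}$: by Proposition \ref{CremonaAY}(a) this strictly decreases $d$, and by Lemma \ref{irreducible} together with irreducibility of $C$ the new class again represents an irreducible curve not supported on an exceptional divisor. Since the numerical $(1)$-class condition is Weyl-invariant, iterating terminates at a Cremona-reduced irreducible numerical $(1)$-class satisfying $d\geq m_1+\cdots+m_{r+1}$.

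The key step, where the parity of $r$ enters, is to show that such a Cremona-reduced class must be $h$. The numerical equation $(r+1)d-(r-1)\sum_{i=1}^{r+3}m_i=r+1$ rearranges to $(r-1)\sum_i m_i=(r+1)(d-1)$. When $r$ is even, $\gcd(r-1,r+1)=1$, so $d-1=k(r-1)$ and $\sum_i m_i=k(r+1)$ for some integer $k\geq 0$. If $k\geq 1$, combining with the Cremona-reduced inequality $\sum_{i=1}^{r+1}m_i\leq d=k(r-1)+1$ forces $m_{r+2}+m_{r+3}\geq 2k-1$, hence $m_{r+2}\geq k$ by the decreasing order; but then
\[
\sum_{i=1}^{r+2}m_i\geq k(r+2)>k(r+1)=\sum_{i=1}^{r+3}m_i,
\]
a contradiction. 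Therefore $k=0$, $d=1$, and $c=h$. Reversing the chain of Cremona transformations exhibits $C$ as a $(1)$-Weyl line.

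The main obstacle I expect is the irreducibility bookkeeping inside the reduction loop: at each step one must verify that the transformed class still represents an irreducible curve not contained in an exceptional divisor, so that Lemma \ref{irreducible} continues to apply and no multiplicity $m_i'$ or degree $d'$ becomes negative. The parity hypothesis is essential because for $r$ odd we have $\gcd(r-1,r+1)=2$, permitting $d-1$ to be only a half-multiple of $r-1$; this produces additional Cremona-reduced candidates whose arithmetic does not yield the contradiction above, consistent with the $F$-class type exceptions recorded in Remark \ref{rem} for $i=0$, and presumably requiring either genuine counterexamples or a more delicate case analysis to handle.
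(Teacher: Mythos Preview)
Your proposal is correct, and it is actually more direct than the paper's own route. The paper deduces Theorem \ref{class3} from Proposition \ref{icremona}, whose proof carries out the same Cremona reduction you describe but then classifies \emph{all} Cremona-reduced numerical $(1)$-classes (for arbitrary $r$) through a multi-case analysis on $t=d-\sum_{i=1}^{r+1}m_i\in\{0,1\}$, arriving at either the line class $h$ or, when $r$ is odd, the exceptional $mF+c'$ family; the theorem then follows because the second alternative is vacuous for even $r$. Your argument bypasses that classification entirely by exploiting $\gcd(r-1,r+1)=1$ to force $d-1=k(r-1)$ and $\sum m_i=k(r+1)$, and the short counting contradiction $\sum_{i=1}^{r+2}m_i\ge k(r+2)>k(r+1)=\sum_{i=1}^{r+3}m_i$ for $k\ge 1$ is clean and correct. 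The trade-off is that your method is specific to even $r$ and gives no information about the odd case, whereas the paper's Proposition \ref{icremona} simultaneously explains the odd-$r$ exceptions of Remark \ref{rem}. Your stated concern about irreducibility bookkeeping in the reduction loop is handled exactly as in the paper, namely by Remark \ref{reduced}, so it is not a gap.
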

Indeed, the equivalence  between  numerical $(1)$-classes and  $(1)$-Weyl line  in even dimensional spaces $Y^r_{r+3}$ follows from  Proposition \ref{icremona} and this completes the cycle.

\begin{remark}\label{rem} Theorem \ref{class} holds for irreducible $(0)$-curves in Mori Dream cases or $Y^5_9$ with two exceptions. The proof follows from  Proposition \ref{CRnum0classrleq5}; one exception is the anticanonical class $F$ in $Y^3_7$ that contains a $(0)$-curve that is not a $(0)$-Weyl line. The only other candidate for a $(0)$-curve that is not a $(0)$-Weyl line in the above hypothesis is $r=4$ and $s=8$ and $2F$.

 Similarly, Theorem \ref{class3} does not hold if $r$ is not even. Indeed, consider $i=1$ and $r=3$, and $s=6$ then $F$ is a $(1)$-curve that is not a $(1)$-Weyl line in $Y^3_6$ (as in Example \ref{F class in 3}) .
\end{remark}

Next proposition also holds for $(0)$-curves and $vdim (C)=r-1$ with part (3) including the $(0)$-curves in $F$ in $Y^3_7$ and $2F$ in $Y^4_8$.

\begin{proposition} \begin{enumerate}
\item Any $(-1)$-curve has $vdim (C)=0$.
\item If $Y$ is Mori Dream Space or $Y^5_9$, the only irreducible curve classes with $vdim (C)=0$ are classes of $(-1)$-curves and $-K_{Y^2_8}$.
\end{enumerate}
\end{proposition}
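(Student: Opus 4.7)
The plan is to handle the two parts separately, using the identifications developed earlier in the paper to translate the arithmetic condition $vdim(C)=0$ into a condition on the intersection pairing and then invoke the classification of numerical $(-1)$-classes. Part (1) should follow essentially by definition: for a $(-1)$-curve $C$, the normal bundle is $N_{C/Y}\cong\calO_{\bbP^1}(-1)^{\oplus(r-1)}$, so $\chi(N_{C/Y})=(r-1)\chi(\calO_{\bbP^1}(-1))=0$; combined with the identification $vdim(C)=\chi(N_{C/Y})$ noted right after equation (\ref{param}) (which is valid because $C$ is smooth and rational), this immediately yields $vdim(C)=0$.

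For Part (2), my first step is to rewrite $vdim(C)=0$ in the language of the bilinear form. From the identity $vdim(C)=(r+1)d-(r-1)\sum_i m_i+r-3=(C\cdot -K_Y)+(r-3)$, the hypothesis becomes $(C\cdot K_Y)=r-3$, equivalently $\langle C,F\rangle=3-r$. By Definition \ref{def3.5}, this is exactly the statement that $C$ is a numerical $(-1)$-class. Hence Part (2) reduces to classifying irreducible numerical $(-1)$-classes in $Y^r_s$ when $Y$ is a Mori Dream Space or $Y^5_9$.

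Next I would split by dimension. In the planar case $r=2$, the Mori Dream Space hypothesis forces $s\le 8$ (so $Y$ is a del Pezzo surface), and Proposition \ref{del} already gives the complete list of irreducible numerical $(-1)$-classes: these are the $(-1)$-Weyl lines (which are $(-1)$-curves by Theorem \ref{Y2sirreducible}) together with the single exceptional class $F=-K_{Y^2_8}$ when $s=8$, producing precisely the exception asserted. For $r\ge 3$ in a Mori Dream Space (or for $Y^5_9$), I would invoke Theorem \ref{class} together with the ingredients of its proof (Proposition \ref{-1classr+4} and Corollary \ref{MDSminus1}), which show that in these cases the linear invariant $\langle C,F\rangle=3-r$ on an irreducible class is already strong enough to force $C$ into the Weyl orbit of $h-e_1-e_2$, so $C$ is a $(-1)$-Weyl line and in particular a $(-1)$-curve, with no higher-dimensional analogue of the $-K_{Y^2_8}$ exception.

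The main obstacle I expect is exactly this last point: the linear condition $vdim(C)=0$ is strictly weaker than the combined linear-and-quadratic condition appearing in part (3) of Theorem \ref{class}, so one has to verify that for $r\ge 3$ in the MDS/$Y^5_9$ range the quadratic invariant $\langle C,C\rangle$ is automatically forced to the value $3-2r$ on irreducible classes. The required rigidity comes from Lemma \ref{irreducible} on the shape of irreducible classes, Cremona reduction via Proposition \ref{CremonaAY}, and the finiteness of the Weyl group; once that rigidity is in place, the proposition follows at once by combining the two cases.
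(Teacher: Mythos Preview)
Your proposal is correct and follows essentially the same route as the paper: translate $vdim(C)=0$ into the numerical $(-1)$-class condition $\langle C,F\rangle=3-r$, then invoke Proposition \ref{del} for $r=2$ and the ingredients behind Theorem \ref{class} (namely Propositions \ref{cremona} and \ref{-1classr+4}) for $r\geq 3$. One small point: the ``obstacle'' you flag in the last paragraph is not actually an obstacle---Propositions \ref{cremona} and \ref{-1classr+4} use only the linear invariant together with irreducibility to conclude that $C$ is a $(-1)$-Weyl line, so no separate control of the quadratic invariant $\langle C,C\rangle$ is required; it comes for free once you know $C$ is a Weyl line.
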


\begin{proof}
Part (1) follows from the observation that $vdim(C)=(-K_Y \cdot C) + (r-3)$
(equation \ref{param}) is stable under the Weyl group action
since the anti-canonical divisor is also stable.
Part (2) follows from Theorem \ref{class} since the condition $vdim(C)=0$ is equivalent to
$C$ being a $(-1)$-numerical class or the anti-canonical class in $Y^2_8$.
\end{proof}

We ask the following question:
\begin{question}
A smooth rational irreducible non-degenerate curve in $Y^r_s$ is rigid if and only if $vdim(C)=0$.
\end{question}

We do have the following for the Mori Dream Space cases now:

\begin{proposition}\label{fmds}
A Mori Dream space has finitely many $(i)$-curves for $i \in \{-1,0,1\}$.
\end{proposition}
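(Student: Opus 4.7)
The plan is to invoke the structural results already established in the paper and reduce the statement to the finiteness of the Weyl group. By Corollary \ref{mds}, the hypothesis that $Y=Y^r_s$ is a Mori Dream Space is equivalent to the Weyl group $W$ being finite; in particular every $W$-orbit in $A^{r-1}(Y)$ is finite, so for any $i\in\{-1,0,1\}$ there are only finitely many $(i)$-Weyl classes.

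For $i=0$ and $i=1$, no additional argument is needed: the proposition is exactly parts (4) and (5) of Theorem \ref{them}, which already assert that MDS is equivalent to having finitely many $(0)$-curves (resp.\ $(1)$-curves), understood in the paper's sense as classes.

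For $i=-1$, I would combine Remark \ref{3 defintions} with Theorem \ref{class}. Remark \ref{3 defintions} gives the easy containment: every $(-1)$-Weyl line is a $(-1)$-curve. The reverse containment at the level of classes is the content of Theorem \ref{class}, one of whose hypotheses is precisely that $Y$ is MDS (the other being $Y^5_9$): every $(-1)$-curve has class in the $W$-orbit of $h-e_1-e_2$, and condition (4) makes this explicit as the classes $(1;1^2)$ and $(r;1^{r+3})$ up to permutation of the point indices. Finiteness of $W$ thus yields finitely many $(-1)$-curve classes. To pass from classes to actual curves one invokes rigidity: a $(-1)$-curve has normal bundle $\mathcal{O}(-1)^{\oplus(r-1)}$, hence $H^0(N_{C/Y})=0$, so $C$ is isolated and reduced in the Hilbert scheme of its class; that Hilbert scheme being of finite type then forces each class to contain only finitely many $(-1)$-curves.

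There is no real obstacle, because the substantive work has been carried out earlier in the section. Corollary \ref{mds} provides the finiteness of $W$; Theorem \ref{them} dispatches $i\in\{0,1\}$; and Theorem \ref{class} supplies the classification for $i=-1$, ruling out extraneous numerical $(-1)$-classes in the MDS setting — which is the only place a difficulty could arise, since a priori nothing prevents infinitely many numerical $(-1)$-classes from existing in $A^{r-1}(Y)$. Once those theorems are available, the present proposition is a short wrap-up.
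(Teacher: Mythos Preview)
Your argument for $i\in\{0,1\}$ is circular. Theorem \ref{them} is stated in the introduction as a summary; the paper explicitly says that the equivalence of its parts (4) and (5) with the MDS condition is established via Theorem \ref{nms2}, and the proof of Theorem \ref{nms2} begins with ``One direction follows from Proposition \ref{fmds}.'' So Proposition \ref{fmds} is logically \emph{prior} to Theorem \ref{them}, and invoking the latter to prove the former begs the question. The finiteness of the Weyl group alone does not immediately settle the $(0)$- and $(1)$-cases either, because at this point in the paper it has not yet been shown that every $(0)$- or $(1)$-curve class is a Weyl class (and indeed there are exceptions such as the $F$ class in $Y^3_7$).

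The paper's own proof avoids this circularity as follows. For $i=-1$ it uses Theorem \ref{class}, exactly as you do. For $i=0$ it invokes Remark \ref{rem} (based on Proposition \ref{CRnum0classrleq5}) to conclude that a $(0)$-curve in a MDS is a $(0)$-Weyl line or one of two exceptional $F$-type classes; then finiteness of $W$ finishes. For $i=1$ the paper does a genuine case analysis: for $s\le r+3$ it uses Proposition \ref{icremona} to bound the possible classes, and for the remaining MDS cases $Y^3_7$ and $Y^4_8$ it shows directly (using Cremona-reducedness and the linear invariant) that a Cremona-reduced numerical $(1)$-class has bounded multiplicities and degree, hence there are only finitely many such classes. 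This last step is exactly the content you are missing.

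Your treatment of $i=-1$ via Theorem \ref{class} matches the paper's approach and is fine; the extra rigidity remark (that $H^0(N_{C/Y})=0$ forces finitely many curves per class) is a reasonable addendum, though the paper is implicitly working with classes throughout.
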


\begin{proof}
If $i= -1$ the statement follows from Theorem \ref{class}.
Remark \ref{rem} implies that $(0)$-curves in a Mori Dream Space are either $(0)$-Weyl lines
 or $F$ in $Y^{3}_{7}$ or $2F$ in $Y^{4}_{8}$. 
Assume now $C$ is a $(1)$-curve.
If $Y$ is a del Pezzo surface then a $(1)$-curve is in the Weyl orbit of a line
and we conclude since the Weyl group is finite by Corollary \ref{mds}.
If $s\leq r+3$, Proposition \ref{cremona} implies that a numerical $(1)$-class is either a $(1)$-Weyl line or is at the form $C$ is a Weyl orbit of the curve $mF+C'$ where $m$ and the degree of $C'$ is bounded above by $\frac{r+1}{2}$. Since the Weyl group is finite we have finitely many possibilities for a $(1)$-curve if $s\leq r+3$. If $Y=Y^4_8$ a numerical $(1)$-class satisfies the equality $5d-3\sum_{i=1}^{8}m_i=5$. Suppose the class is Cremona reduced ie $d\geq \sum_{i=1}^{5} m_i$, then all multiplicities are bounded above by $5$, this implies that the degree $d \leq 25$. We conclude again since the Weyl group is finite. Similarly, if $Y=Y^3_7$ we obtain that all multiplicities $m_i\leq 2$, while the linear invariant gives that $d \leq 8$, and we conclude again via the finiteness of the Weyl group.

\end{proof}

\begin{example}\label{F class in 3}[ The $F$-class in $Y^3_8$] 
In order to construct examples of $(-1)-$curves 
that are not $(-1)-$ Weyl lines 
we study the $F$-class in $Y^3_8$. 
The $F$-class in $Y^3_8$ represents smooth rational curves 
and is also the class of an elliptic curve.
\end{example}
An interesting example of the type of phenomena that can appear
is afforded by the $F$ class in $\bbP^3$ with $s=8$,
which is the class of quartics passing simply through $8$ points.
This class is Cremona reduced, and as we have noted, is invariant under the Weyl group.
It is a numerical $(-1)$-class: $(-K, F) = 0 = 3-r$ in this case.
It is the class of an effective curve: the easiest way to see this is to note that if you break up the $8$ points into pairs, then the four lines joining the four pairs is a (disconnected, reducible) curve in $\bbP^3$ in this class.

It is also the class of an irreducible curve of genus one.
The linear system of quadrics in $\bbP^3$ has dimension $9$
and the quadrics through the $8$ points is a pencil $\mathcal{P}$.
The base locus of this pencil, which is the intersection of any two of the quadrics in the pencil,
is (in general) a smooth curve $E$ of degree four and genus one through the eight points.
If one chooses a smooth quadric in the pencil,
and considers it as being isomorphic to $\bbP^1\times\bbP^1$,
then the genus one curve has bidegree $(2,2)$.

Since the $8$ points are general,
the general quadrics in the pencil will be smooth
and exactly four members will be cones over smooth conics;
none of the four vertices are among the $8$ given points.
In each case there are lines on the quadrics;
in the smooth case there are the two rulings,
and in the cone case there is the one system of lines through the vertex.
Consider the incidence correspondence
$\calI = \{(Q,R)\;|\; Q \in \calP, R = \text{ a ruling on Q}\}$.
We note that $\calI$ is in $1$-$1$ correspondence with the set of $g^1_2$'s on the base curve $E$;
a ruling on one of the quadrics restricts to $E$ as a $g^1_2$
and inversely a $g^1_2$ gives a quadric in the pencil as the union of the secants,
with the secants forming the ruling.

For a pair $(Q,R)\in \calI$, if $Q$ is smooth,
we may consider the linear system $3R+R'$
where $R'$ is the other ruling;
this system is of bidegree $(3,1)$ and has dimension $7$;
hence there is a unique member $C \in 3R+R'$
passing through the first $7$ of the given original base points$ \{p_i\}$.
We ask that, as we vary $Q$ in the pencil,
that this curve $C$ pass also through the last (eighth) point.

Note that $3R+R' \equiv 2R+H$ where $H$ is the hyperplane class on $Q$,
and that these systems therefore restrict to $2g^1_2+H|_E$ on the genus one curve $E$.
The entire construction depends on the choice of the $g^1_2$ on $E$,
and we are therefore asking how many $g^1_2$'s are such that
$|2g^1_2+H|_E - (\sum_{i=1}^7p_i)|$ contains the point $p_8$.

We may always write a $g^1_2$ as $p_8+p$ for a variable point $p \in E$.
This condition is then that $2p+2p_8+H|_E-(\sum_{i=1}^7p_i)) =p_8$ in the Jacobian of $E$;
there are four such points $p$ of course (any solution plus the two-torsion points).
This gives four curves $C$ of bidegree $(1,3)$ on a quadric through all $8$ points.
Each of these is a smooth rational curve, and is indeed a $(-1)$-curve in $Y^3_8$
with class $F$.
(The normal bundle is $\calO(-1)\oplus\calO(-1)$ using results of \cite{EV}.)

\section{Infinity of $(i)$-curves}\label{infinite}

In this section we will prove the following.

\begin{theorem}\label{nms2}
There are finitely many classes of $(0)$-curves or equivalently finitely many classes of $(1)$-curves if and only if 
$Y^r_s$ is a Mori Dream Space.
\end{theorem}

The proof is presented after Corollary \ref{infinite01rgeq5} in the first subsection below.

Although $Y^5_9$ is not a Mori Dream Space we can determine all $(-1)$- and $(0)$-curves.
\begin{proposition} 
\begin{enumerate}
\item In $Y^5_9$ all $(0)$-curves are $(0)$-Weyl lines (or have numerical $(0)$-classes).
\item In  $Y^5_9$ the only $(-1)$-curves are ones in Example \ref{line}
(i.e.  the proper transforms of lines through two points and the proper transform of the rational normal curves through $n+3$ points).
\end{enumerate}
\end{proposition}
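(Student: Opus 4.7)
The key observation is that $Y^5_9$ has $r=5$ and $s=r+4$, which places it exactly in the range covered by Proposition \ref{-1classr+4}, Proposition \ref{CRnum0classrleq5}, and Proposition \ref{rplus4points}. Those results were stated for \emph{all} dimensions $r\leq 5$ (not just the Mori Dream Space cases), so $Y^5_9$ inherits their conclusions even though it fails to be a Mori Dream Space. The plan is to reduce both statements to these earlier propositions using the iterated standard Cremona transformations described in Section \ref{standardCremona}.

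For part (1), let $C$ be a $(0)$-curve in $Y^5_9$ with class $c\in A^{r-1}(Y^5_9)$, which is a numerical $(0)$-class. If $\deg c=1$ there is nothing to show, so assume $\deg c\geq 2$. By Remark \ref{reduced}, applying an element of the Weyl group to $C$ produces an irreducible effective curve $C'$ of degree at least two whose class $c'$ is Cremona reduced with non-negative multiplicities (Lemma \ref{irreducible}) and is still a numerical $(0)$-class. Proposition \ref{CRnum0classrleq5} lists all Cremona reduced numerical $(0)$-classes with non-negative multiplicities in the range $r\leq 5$, $s=r+4$; the only exceptions to $h-e_1$ occur for $(r,s)=(3,7)$ and $(4,8)$. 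In the case $(r,s)=(5,9)$ the only option is the line class $h-e_1$, contradicting $\deg c'\geq 2$. Hence we must have had $\deg c=1$ from the outset after Weyl reduction, which means $C$ is a $(0)$-Weyl line.

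For part (2), let $C$ be a $(-1)$-curve in $Y^5_9$; then its class is a numerical $(-1)$-class. Proposition \ref{-1classr+4} is proved precisely for $r\leq 5$ and $s=r+4$ and asserts that any numerical $(-1)$-class represented by an irreducible curve is a $(-1)$-Weyl line. Applying it with $r=5$ and $s=9$ shows $C$ is a $(-1)$-Weyl line. It remains to identify all $(-1)$-Weyl lines in $Y^5_9$ explicitly: this is the content of the proof of Proposition \ref{rplus4points} with $r=5$, where the numerical constraints $\langle c,F\rangle=-2$ and $\langle c,c\rangle=-7$ combined with the parity and minimization analysis force $d\in\{1,5\}$. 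The case $d=1$ yields the lines through two of the nine points, and $d=5$ yields the rational normal curves of degree five through eight of the points; these are exactly the classes in Example \ref{line}.

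The main obstacle is making sure the reductions by Cremona transformations behave well on $Y^5_9$, which has an infinite Weyl group. This is not an issue for our argument because (i) each individual standard Cremona transformation $\phi_I$ strictly decreases the degree of any Cremona-\emph{non}-reduced class with decreasing multiplicities, so the descent terminates after finitely many steps in a Cremona reduced class, and (ii) Cremona transformations preserve irreducibility of curves, so the reduced class is represented by an actual irreducible curve, letting us invoke Proposition \ref{-1classr+4} and Proposition \ref{CRnum0classrleq5} as stated. Once the finite list of Cremona reduced classes is identified, the remaining work consists entirely of enumerating the members of their Weyl orbits, which by construction are $(-1)$- or $(0)$-Weyl lines.
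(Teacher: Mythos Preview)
Your proposal is correct and follows essentially the same route as the paper: the paper invokes Theorem \ref{class} (whose proof for $Y^5_9$ rests on Proposition \ref{-1classr+4}) for part (2) and Proposition \ref{CRnum0classrleq5} for part (1), exactly as you do, while you additionally unpack the enumeration of $(-1)$-Weyl lines via the $r=5$ case of Proposition \ref{rplus4points}. One small wording issue in part (1): Remark \ref{reduced} does not guarantee the Cremona-reduced class $c'$ still has degree $\geq 2$; the correct phrasing is that the descent either terminates at a degree-one class (and you are done) or at a Cremona-reduced class of degree $\geq 2$, which Proposition \ref{CRnum0classrleq5} then rules out for $r=5$.
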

\begin{proof}
 Indeed Theorem \ref{class} implies that there are {\it finitely} many $(-1)$-curves;
 therefore the notion of $(-1)$-curves do not detect the finite generation of the Cox Ring of $Y$. Moreover, Proposition \ref{CRnum0classrleq5} implies that all Cremona reduced $(0)$-curves are lines through two points.
 Therefore every $(0)$-curve is a $(0)$-Weyl line (and there are infinitely many),
 see  Remark \ref{rem}.
\end{proof}

Can movable curves be used in the study of Mori Dream Spaces?
\begin{question} Does Theorem \ref{nms2} hold for other varieties?
\end{question}

\subsection{Infinity of movable curves in $\mathbb{P}^r$}\label{infinite1}

In this Section we analyze $(i)$- Weyl lines in  $Y^r_s$ that is not a Mori Dream Space: namely in \ref{infinite1} we begin with an analysis of the $(1)$-Weyl lines, which are the Cremona images of the general line $h$ in $\bbP^r$, and in \ref{infinite2} we study the rigid curves, $i=-1$.

We will analyze the process of applying the Cremona to the $r+1$ lowest multiplicity points, 
and then re-ordering so that the multiplicities are in ascending order,
in order to simplify the notation. We begin with the $s=r+4$ case. 
It is more useful for this section to place the multiplicities in ascending order,
and we will systematically do that here.


\begin{lemma}\label{recursionCremona} 
Let $L = (d;m_1 \leq m_2 \leq \ldots \leq m_{r+4})$
be a curve class in $A^{r-1}(Y^r_{r+4})$
with positive degree and non-negative multiplicities in nondecreasing order.
Let $I \subset \{1,\ldots,r+4\}$ have size $r+1$, so that there are three indices missing;
assume that those indices are $1, k, \ell$ with $4 \leq k$ and $k+3 \leq \ell \leq r+2$.
(This implies in particular that $1 \notin I$; $2, 3 \in I$;
$m_{r+3}, m_{r+4} \in I$; and $r \geq 5$.)
Assume that
\begin{equation}\label{recursioninequality}
d > \sum_{i \in I} m_i, 
\text{ or } d = \sum_{i \in I} m_i \text{ and } m_{r+4} > m_1.
\end{equation}
Then the Cremona image $\phi(L)$
(where $\phi$ is based at the first $r+1$ points, those with the lowest multiplicities)
has the form $\phi(L) = (d’; \{m_i’\})$
(where the $m_i’$ are also placed in increasing order)
and these parameters also satisfy (\ref{recursioninequality}).
Moreover in this case $d' > d$.
\end{lemma}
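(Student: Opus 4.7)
The plan is to apply Proposition \ref{CremonaAY}(a) directly and track the sum $\sum_{i \in I} m_i$ through the Cremona transformation, then verify the sorted output still satisfies (\ref{recursioninequality}).

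First I would set $t := d - \sum_{i=1}^{r+1} m_i$, so by Proposition \ref{CremonaAY}(a) the unsorted Cremona image is
\[
\phi(L) = \bigl(d + (r-1)t;\; m_1+t, \ldots, m_{r+1}+t,\; m_{r+2}, m_{r+3}, m_{r+4}\bigr).
\]
Since $k \leq \ell - 3 \leq r-1$, both $1$ and $k$ lie in $\{1, \ldots, r+1\}$, so $I$ is obtained from $\{1, \ldots, r+1\}$ by deleting ``small-index'' positions from $\{1, k, \ell\}$ and inserting ``large-index'' positions from $\{r+2, r+3, r+4\}$. Monotonicity of the $m_i$ then yields $\sum_{i \in I} m_i \geq \sum_{i=1}^{r+1} m_i$, with equality forcing $m_1 = m_{r+4}$. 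In the strict branch of (\ref{recursioninequality}) this gives $d > \sum_{i=1}^{r+1} m_i$; in the equality branch the assumption $m_{r+4} > m_1$ rules out equality of the two sums. Either way $t > 0$, and hence $d' = d + (r-1)t > d$.

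Second, I would verify (\ref{recursioninequality}) for the sorted $\phi(L)$. The key identity is that for $j \in I$ the new value at position $j$ is $m_j + t$ if $j \leq r+1$ and $m_j$ otherwise, so
\[
\sum_{i \in I}(m_i)' \;=\; \sum_{i \in I} m_i + \bigl|I \cap \{1,\ldots,r+1\}\bigr|\,t.
\]
When $\ell \leq r+1$ this cardinality is $r-2$, giving
\[
d' - \sum_{i \in I}(m_i)' \;=\; \bigl(d - \sum_{i \in I} m_i\bigr) + t \;\geq\; t \;>\; 0,
\]
a strict inequality. When $\ell = r+2$ the cardinality is $r-1$ and one obtains $d' - \sum_{i \in I}(m_i)' = d - \sum_{i \in I} m_i \geq 0$; here the strict branch still gives strict inequality, while in the equality branch one checks that $t > 0$ precludes ``all multiplicities of $\phi(L)$ equal'' (this would force $t=0$ by substitution), so $m'_{r+4} > m'_1$ is inherited.

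The main obstacle is the final combinatorial verification: after reordering the multiplicities of $\phi(L)$ into nondecreasing order, the subset of positions corresponding (as a multiset) to $I$ must again have the form $\{1, \ldots, r+4\} \setminus \{1, k', \ell'\}$ with $4 \leq k'$ and $k' + 3 \leq \ell' \leq r+2$. The bound $r \geq 5$ (forced by $k \geq 4$ and $k + 3 \leq \ell \leq r+2$) provides the room. I would run a short case analysis on how many of the unshifted entries $m_{r+2}, m_{r+3}, m_{r+4}$ land among the top four sorted positions: typically the new smallest is $m_1 + t$ (so position $1$ is omitted) and the new largest remains $m_{r+4}$, while $m_{r+2}, m_{r+3}$ insert among the shifted $m_i + t$ in positions bounded below by $4$ and separated by the same gap inherited from $k + 3 \leq \ell$. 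This pins down valid $k', \ell'$ and closes the argument.
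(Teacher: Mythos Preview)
There is a genuine gap in your second step: the quantity you compute is not the one the lemma asks for. You evaluate $\sum_{i\in I}(m_i)'$ where $(m_i)'$ denotes the new value at \emph{unsorted} position $i$; the lemma requires $\sum_{i\in I} m_i'$ where $m_i'$ is the $i$-th entry of the \emph{sorted} $\phi(L)$, for the \emph{same} fixed $I$. These two sums differ. Concretely, once the correct sorting is established (see below), the sorted $m_i'$ are $m_1'=m_{r+2}$, $m_2'=m_{r+3}$, $m_3'=m_{r+4}$, and $m_i'=m_{i-3}+t$ for $i\geq 4$, so $\sum_{i\in I} m_i' = m_{r+3}+m_{r+4}+(r-1)t+\sum_{j\in\{1,\dots,r+1\}\setminus\{k-3,\ell-3\}} m_j$, which is not your $(r-2)t+\sum_{i\in I}m_i$ (or $(r-1)t+\sum_{i\in I}m_i$) in general.

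Your attempt to repair this in the final paragraph rests on an incorrect picture of the sorting. You write that ``typically the new smallest is $m_1+t$'' and ``the new largest remains $m_{r+4}$'', but the hypothesis gives exactly the opposite: since $1\notin I$ and $r+4\in I$, (\ref{recursioninequality}) yields $d\geq \sum_{i=2}^{r+1}m_i+m_{r+4}$, i.e.\ $m_{r+4}\leq m_1+t$. Hence all three unshifted entries $m_{r+2},m_{r+3},m_{r+4}$ are $\leq$ every shifted entry $m_j+t$, and they occupy the \emph{bottom} three sorted positions, not the top. Tracking the $I$-indexed values through this sorting sends them to positions $\{1,2,3\}\cup\bigl(\{5,\dots,r+4\}\setminus\{k+3,\ell+3\}\bigr)$ (when $\ell\leq r+1$), a set that \emph{contains} $1$; it is never of the form $\{1,\dots,r+4\}\setminus\{1,k',\ell'\}$, so your proposed case analysis cannot close. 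Worse, the sum over this tracked set is $\leq \sum_{i\in I}m_i'$, so the inequality you establish does not imply the one you need.

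The paper's route is to first prove $m_{r+4}\leq m_1+t$ as above, read off the exact sorted order, and then compare $\sum_{i\in I}m_i'$ directly with $\sum_{i\in I}m_i$ by recognising that the former equals $\sum_{i\in I'}m_i+(r-1)t$ with $I'=\{1,\dots,r+4\}\setminus\{k-3,\ell-3,r+2\}$; one then shows $I(j)\geq I'(j)$ index by index, and \emph{this} is precisely where the constraints $4\leq k$ and $k+3\leq\ell\leq r+2$ enter. Your argument for $t>0$ (hence $d'>d$) is fine, but the core inequality needs to be redone along these lines.
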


\medskip\noindent{\bf Proof:}
We define $t = d-\sum_{i=1}^{r+1}m_i$,
and note that the degree of $\phi(L)$ is then $d’ = d+(r-1)t$
and the multiplicities are 
\[
m_1+t, m_2+t,\ldots,m_{r+1}+t, m_{r+2}, m_{r+3}, m_{r+4}
\]
in some order, by Proposition \ref{CremonaAY}(a).

We first claim that $m_{r+4} \leq m_1+t$.
This is equivalent to having $d \geq \sum_{i=2}^{r+1}m_i + m_{r+4}$,
which is implied by (\ref{recursioninequality}) and the assumptions that $1 \notin I$
and $m_{r+4} \in I$.

Since this is true, the re-ordering of the multiplicities of $\phi(L)$
to be in increasing order gives
\[
m_1’ = m_{r+2}, m_2’ = m_{r+3}, m_3’ = m_{r+4},
\text{ and for } 4 \leq i \leq r+4, m_i’ = m_{i-3}+t.
\]
Therefore (\ref{recursioninequality}) for $\phi(L)$, which is 
$d’ > \;(\text{ resp. }\geq)\;\sum_{i\in I} m_i’$,
is equivalent to
\[
d+(r-1)t > \;(\text{ resp. }\geq)\; m_{r+3} + m_{r+4} + \sum_{i \in I, i \geq 4} (m_{i-3}+t)
\]
since $1 \notin I$ and $2,3 \in I$.
Note that since $|I|=r+1$, the sum above contains $r-1$ indices;
hence we may subtract $(r-1)t$ from both sides of this inequality to obtain
\[
d > \;(\text{ resp. }\geq)\; \sum_{i \in I, i \geq 4} m_{i-3} + m_{r+3} + m_{r+4}.
\]
To prove this, it suffices to show that the right side of (\ref{recursioninequality})
is at least the right side of the above, i.e., that
\[
\sum_{i \in I} m_i \geq \sum_{i \in I, i \geq 4} m_{i-3} + m_{r+3} + m_{r+4}.
\]
Now the sum of $r+1$ multiplicities on the right side of this
are exactly those with indices in the set $I' = \{1,\ldots,r+4\} - \{k-3, \ell-3, r+2\}$.
If we denote the $j$'th index in $I$ by $I(j)$, and similarly for $I'$,
it will suffice to show that $I(j) \geq I'(j)$ for each $j$.

The set of indices in $I$ increase by one at the $1, k, \ell$ points;
those of $I'$ increase at the $k-3, \ell-3, r+2$ points.
Since $1 \leq k-3$, the first increase of $I$ is no later than that of $I'$;
since $\ell \leq r+2$, the third increase of $I$ is no later than that of $I'$.
The only failure then would be if the second increase of $I$ would be later than the second of $I'$, and that will only happen if $\ell-3 < k$.
This is forbidden by the assumption that $k+3 \leq \ell$.

This proves that (\ref{recursioninequality}) holds for $\phi(L)$ also.

To finish and show $d' > d$, we must show that $t >0$, or that
$d > \sum_{i=1}^{r+1} m_i$.  This follows immediately from (\ref{recursioninequality}) if the inequality is strict; if not, but $m_{r+4} > m_1$, we get the result since $r+4 \in I$.
Finally we must show that if the inequality
(of the parameters for $\phi(L)$ and hence also for $L$) is an equality,
then $m_{r+4}' > m_{1}'$, i.e., that $m_{r+1}+t > m_{r+2}$;
this is equivalent to $d > \sum_{i=1}^r m_i + m_{r+2}$.
The indices in $I$ dominate those in this sum, and so the only way this could fail is if
we have equality here, and all $m_j$'s are equal.
This is a contradiction, since $m_{r+4} > m_1$.

\rightline{QED}

In case of $r = 3, 4$, a completely parallel lemma can be proved, but we need to have one more point ($s=r+5$).  In particular, using the same notation, if
\begin{equation}\label{recursioninequalityr=34}
d > m_3+m_4+\sum_{i=7}^{r+5} m_i, 
\text{ or } d = m_3+m_4+\sum_{i=7}^{r+5} m_i \text{ and } m_{r+5} > m_1
\end{equation}
then we have the same conclusion:
the Cremona image $\phi(L)$ has parameters also satisfying 
(\ref{recursioninequalityr=34}) and the degree increases.
We leave it to the reader to check the details,
which are parallel in all respects to those of Lemma \ref{recursionCremona}.

For $r=2$ and $s=9$, the recursive argument requires the inequality
\begin{equation}\label{recursioninequalityr=2}
d > m_3+m_6+m_9, 
\text{ or } d = m_3+m_6+m_9 \text{ and } m_9 > m_1
\end{equation}
and again the same argument goes through, with the same conclusion.


\begin{corollary}\label{infinite01rgeq5}
If $r \geq 5$ and $s \geq r+4$,
or $r \geq 3$ and $s \geq r+5$,
or $r=2$ and $s \geq 9$, there are infinitely many $(1)$-Weyl line classes
and $(0)$-Weyl line classes.
\end{corollary}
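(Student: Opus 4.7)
The plan is to exhibit, in each listed range of $(r,s)$, an infinite sequence of $(i)$-Weyl line classes with strictly increasing degrees, for both $i=0$ and $i=1$. It suffices to prove the statement at the minimal values of $s$ in each range, since enlarging $s$ only adds extra points with multiplicity zero and embeds a Weyl orbit for smaller $s$ into one for larger $s$. Hence the cases to treat are $(r,s) = (2,9)$, $(3,8)$, $(4,9)$, and $(r, r+4)$ for each $r \geq 5$.

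Fix $i \in \{0,1\}$ and set $L_0 := h$ if $i=1$, or $L_0 := h - e_1$ if $i=0$; this is an $(i)$-Weyl line class by definition. Define $L_{n+1} := \phi(L_n)$, where $\phi$ is the standard Cremona transformation based at the $r+1$ points of smallest multiplicity of $L_n$, followed by relabelling the indices so that the multiplicities of $L_{n+1}$ are arranged in nondecreasing order. Each $L_n$ lies in the Weyl orbit of $L_0$ and is therefore an $(i)$-Weyl line class.

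The key step is to verify by direct computation that, in each minimal case, there is a finite $N$ such that $L_N$ satisfies the hypothesis (\ref{recursioninequality}) for some admissible index set $I$, or its analogue (\ref{recursioninequalityr=34}) when $r \in \{3,4\}$, or (\ref{recursioninequalityr=2}) when $r=2$. This amounts to tracking the degree and the multiplicity vector through a small number of explicit Cremona iterations starting from $L_0$. Once $L_N$ satisfies the relevant inequality, Lemma \ref{recursionCremona} (together with its analogues for $r \leq 4$) yields by induction on $n \geq N$ that every subsequent $L_n$ satisfies the same inequality and that $\deg L_{n+1} > \deg L_n$. Hence the tail $\{\deg L_n\}_{n \geq N}$ is a strictly increasing sequence of positive integers, so the classes $L_N, L_{N+1}, \ldots$ are pairwise distinct, and we obtain infinitely many $(i)$-Weyl line classes as required.

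The main obstacle is the starter step: since $L_0$ has at most one nonzero multiplicity, the hypothesis of Lemma \ref{recursionCremona} cannot be satisfied at $n=0$, and one must show in each case that after finitely many Cremona steps the class acquires enough spread in its multiplicities to meet the required inequality. I expect this to be a routine but finicky case-by-case verification rather than a conceptual difficulty. A cleaner route would be to prove a single quantitative lemma asserting that each Cremona step increases some \emph{spread} statistic of the multiplicity vector, so that the required inequality is attained after a number of steps bounded in terms of $r$ alone; this would make the argument uniform across all $(r,s)$ appearing in the statement. Beyond this initial bookkeeping, the recursion supplied by Lemma \ref{recursionCremona} carries the infinity conclusion cleanly.
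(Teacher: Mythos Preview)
Your approach is exactly the one the paper takes: reduce to the minimal $s$, iterate Cremona at the lowest-multiplicity points starting from $h$ or $h-e_1$, and invoke Lemma~\ref{recursionCremona} (or its analogues (\ref{recursioninequalityr=34}), (\ref{recursioninequalityr=2})) to force the degrees to strictly increase.

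However, your closing paragraph misreads the situation. There is no ``starter step'' obstacle: the initial class $L_0$ already satisfies the recursion hypothesis in every case. For $i=1$ we have $L_0 = h = (1;0^s)$, so $d = 1 > 0 = \sum_{i\in I} m_i$ and the strict inequality in (\ref{recursioninequality}) (respectively (\ref{recursioninequalityr=34}), (\ref{recursioninequalityr=2})) holds at once. For $i=0$, with multiplicities in ascending order $L_0 = (1;0^{s-1},1)$, the relevant sum of multiplicities over $I$ picks up exactly the single $1$ (since the missing indices $1,k,\ell$ satisfy $\ell \leq r+2$, so the top index is in $I$), giving $d = \sum_{i\in I} m_i$ together with $m_{\text{top}} = 1 > 0 = m_1$; this is precisely the equality branch of the hypothesis. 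The analogous checks for (\ref{recursioninequalityr=34}) and (\ref{recursioninequalityr=2}) are identical. So no preliminary iterations, no case-by-case bookkeeping, and no auxiliary ``spread'' lemma are needed: Lemma~\ref{recursionCremona} applies from $n=0$ and the proof is immediate. Drop the last paragraph and you have the paper's proof verbatim.
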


\medskip\noindent{\bf Proof:}
For $r \geq 5$, it suffices to prove this for $s = r+4$.
For the $(1)$-Weyl line classes, apply Lemma \ref{recursionCremona} repeatedly starting with $L=h$, the general line class.
Since the degrees increase without bound
(using the strict inequality of (\ref{recursioninequality})), we have the result.
For the $(0)$-Weyl line classes, we apply the same lemma
starting with $L=h-e_1$, which using the $(d;\underline{m})$ notation
is given by $(1;0^{r+3},1)$.
Here we do not have the strict inequality in (\ref{recursioninequality}), but we do have $m_{r+4}=1 > m_1=0$; the result follows.

For $r = 3,4$, it suffices to prove this for $s = r+5$;
the same argument holds, this time using the recursive assumption of (\ref{recursioninequalityr=34}).

Finally for $r=2$, it suffices to prove this for $s=9$, and again this is done via
the recursion supplied by (\ref{recursioninequalityr=2}).

\rightline{QED}

We can now prove the statement at the beginning of the Section:

\medskip\noindent{\bf Proof of Theorem \ref{nms2}:}
One direction follows from Proposition \ref{fmds}. The other direction follows from Corollary \ref{infinite01rgeq5}.

\rightline{QED}

One can also be very explicit about the recursion that produces the infinite family of curves in these cases with low $r$.
For example, for $r=3$ and $s=8$, one can easily see (by induction) that the sequence of $(0)$-Weyl line classes $\phi^{i}(h-e_8)$ has the form $d = i^2+i+1$, and multiplicities
$((k^2)^4, (k^2+k)^3, (k^2+k+1)^1)$ if $i=2k$;
$((k^2+k)^3, (k^2+k+1)^1,(k^2+2k+1)^4)$ if $i=2k+1$.

For $r=2$ and $s=9$, 
the sequence of $(0)$-Weyl line classes $\phi^{i}(h-e_9)$ has the form
$d = 1+i(i+1)/2$, and multiplicities
$((k+3k(k-1)/2)^3, (2k+3k(k-1)/2)^3, (3k(k+1)/2)^2, (1+3k(k+1)/2)^1)$ if $i=3k$;
$((3k(k+1)/2)^2, (1+3k(k+1)/2)^1, ((k+1)(1+3k/2))^3, (2k+3k(k-1)/2)^3)$ if $i=3k+1$;
$((3k(k+1)/2)^2, (1+3k(k+1)/2)^1, ((k+1)(1+3k/2))^3, ((k+1)(2+3k/2))^3)$ if $i=3k+2$.


We now observe that the results of Section \ref{-1curvesMDS} imply that,
for the Mori Dream Spaces $Y^r_s$
($r=2, s \leq 8; r=3,4, s \leq r+4; r \geq 5, s \leq r+3$)
there are only finitely many numerical $(1)$-classes
and numerical $(0)$-classes.
Hence this gives a criterion for these spaces $Y^r_s$ being Mori Dream Spaces:

\begin{theorem}\label{finiteclass}
The space $Y^r_s$ is a Mori Dream Space
if and only if there are finitely many numerical $(1)$-classes in $A^{r-1}$,
and if and only if there are finitely many numerical $(0)$-classes in $A^{r-1}$.
\end{theorem}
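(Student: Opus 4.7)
The plan is to derive Theorem \ref{finiteclass} directly from the finiteness results already proved in Sections \ref{iclasses} and \ref{infinite1}, splitting into the two directions. The theorem amounts to two biconditionals (one for $i=0$, one for $i=1$), and both will be handled in parallel.

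For the ``not MDS implies infinitely many'' direction, I would invoke Corollary \ref{infinite01rgeq5}: whenever $(r,s)$ lies outside the range \eqref{mdswithrs} (equivalently $F^2 \leq 0$, or $r\geq 5$ and $s\geq r+4$, or $r=3,4$ and $s\geq r+5$, or $r=2$ and $s\geq 9$), the iterated Cremona construction of Lemma \ref{recursionCremona} applied to $h$ and to $h-e_1$ produces an infinite sequence of $(1)$-Weyl line classes and of $(0)$-Weyl line classes with strictly increasing degrees. Since by Remark \ref{3 defintions} every $(i)$-Weyl line class is automatically a numerical $(i)$-class (it represents an $(i)$-curve and hence meets $-K_Y$ in $2+i(r-1)$), this yields infinitely many numerical $(i)$-classes in $A^{r-1}$ for each of $i=0,1$ as soon as $Y$ fails to be a Mori Dream Space.

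Conversely, suppose $Y^r_s$ is a Mori Dream Space, so by Corollary \ref{mds} the Weyl group $W$ is finite. The strategy is to show that every numerical $(i)$-class arising from a curve reduces, under $W$, to one of a finite explicit list of Cremona reduced seed classes, so that the full set is contained in the $W$-orbit of a finite set. For $i=0$: when $s \leq r+3$ Proposition \ref{cremona} identifies every such class as a $(0)$-Weyl line (seed $h-e_1$); in the two remaining MDS cases $Y^3_7$ and $Y^4_8$, Proposition \ref{CRnum0classrleq5} shows that the Cremona reduced numerical $(0)$-classes are exactly $h-e_1$, together with the exceptional seeds $F$ and $2F$ respectively. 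For $i=1$: Proposition \ref{icremona} classifies the Cremona reduced numerical $(1)$-classes for $s=r+3$ as the line class $h$ together with the finitely many classes of the form $mF+c'$ with $m \leq (r+1)/4$ and $c'$ of bounded degree, while the remaining MDS cases $Y^3_7$ and $Y^4_8$ admit only bounded degree Cremona reduced representatives by the same analysis as in the proof of Proposition \ref{fmds} (the linear condition $\langle c,F\rangle=r+1$ together with Cremona reducedness forces an explicit bound on $d$). Either way, one concludes that the set of numerical $(i)$-classes is the $W$-orbit of a finite list, hence finite.

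The main obstacle is the bookkeeping in the two sporadic MDS cases $Y^3_7$ and $Y^4_8$, where the exceptional seeds $F$ and $2F$ genuinely appear alongside the Weyl orbit of $h-e_1$, and where the degree bound for Cremona reduced numerical $(1)$-classes must be extracted from the argument used in Proposition \ref{fmds} rather than from Proposition \ref{icremona} directly. Once those exceptional seeds are pinned down, the finiteness reduces to the elementary fact that the orbit of a finite set under a finite group is finite, and the equivalence follows from matching the two lists of non-MDS cases with those of Corollary \ref{infinite01rgeq5}.
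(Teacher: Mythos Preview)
Your proposal is correct and follows exactly the route the paper takes: the paragraph immediately preceding Theorem \ref{finiteclass} is the paper's entire argument, and it simply points backward to Section \ref{-1curvesMDS} (Propositions \ref{cremona}, \ref{icremona}, \ref{CRnum0classrleq5}, and the degree-bounding argument of Proposition \ref{fmds}) for the MDS direction and forward to Corollary \ref{infinite01rgeq5} for the non-MDS direction, which is precisely what you spell out.

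One point worth flagging, though it is the paper's looseness rather than yours: as literally stated in Definition \ref{def3.5}, a numerical $(i)$-class is any lattice point on an affine hyperplane in $A^{r-1}$, and there are always infinitely many of those. Both you and the paper silently restrict to classes with positive degree and non-negative multiplicities (or classes of irreducible curves) when invoking the Cremona-reduction arguments of Section \ref{-1curvesMDS}; you make this explicit with the phrase ``arising from a curve,'' which is the right move. Under that reading your bookkeeping for the sporadic cases $Y^3_7$ and $Y^4_8$ is accurate, and the reduction to a finite seed list plus finite Weyl group is exactly what the paper intends.
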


We make an additional observation here, concerning these infinite series of $(0)$- and $(1)$-Weyl classes.  Each imposes a condition on divisors to be effective: for any such class $c$, it is a necessary condition for $D$ to be effective, that $(D\cdot c) \geq 0$.  We note below that these conditions are independent:

\begin{proposition}\label{independentconditions}
For the infinite series $\{c_k\}$ of $(0)$-Weyl classes constructed above,
we have that $c_{k+1}$ is not in the convex hull of $\{c_j \}_{j\leq k}$ for all $k$.
\end{proposition}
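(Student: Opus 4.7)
The plan is to separate $c_{k+1}$ from $c_0, c_1, \ldots, c_k$ using a single linear functional on $A^{r-1}(Y)_{\mathbb{R}}$, namely the degree. Writing $c = dh - \sum_i m_i e_i$, the map $c \mapsto d$ is given by the intersection pairing with the hyperplane class $H$, and so is $\mathbb{R}$-linear. The key input from Section \ref{infinite1} is that the recursion lemmas developed there---Lemma \ref{recursionCremona} together with its variants built around the inequalities \eqref{recursioninequalityr=34} for $r=3,4$ and \eqref{recursioninequalityr=2} for $r=2$---are designed precisely so that $t = d - \sum_{i=1}^{r+1} m_i > 0$ and that the inequality propagates to $\phi(L)$. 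Consequently, along the sequence $c_{k+1} = \phi(c_k)$ the degrees form a strictly increasing sequence $d_0 < d_1 < d_2 < \cdots$.

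With this in hand, the proof is immediate. Suppose for contradiction that $c_{k+1} = \sum_{j=0}^{k} \lambda_j\, c_j$ with $\lambda_j \geq 0$ and $\sum_j \lambda_j = 1$. Applying the degree functional and using linearity gives
\[
d_{k+1} \;=\; \sum_{j=0}^{k} \lambda_j\, d_j \;\leq\; \Bigl(\sum_{j=0}^k \lambda_j\Bigr)\max_{j \leq k} d_j \;=\; d_k,
\]
contradicting $d_{k+1} > d_k$. Hence $c_{k+1}$ is not a convex combination of the earlier classes.

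The same argument actually handles the stronger statement that $c_{k+1}$ does not lie in the positive cone spanned by $\{c_j\}_{j \leq k}$, which is what is genuinely needed for the effectivity conditions $(D \cdot c_j) \geq 0$ to cut out independent half-spaces. For this, observe that each $c_j$ is a $(0)$-Weyl class, so by Weyl-invariance of the linear form $\langle -, F\rangle$ together with its value $\langle h - e_1, F\rangle = 2$, we have $\langle c_j, F\rangle = 2$ for every $j$ and likewise for $c_{k+1}$. Pairing any putative positive identity $c_{k+1} = \sum_j \lambda_j c_j$ with $F$ therefore forces $\sum_j \lambda_j = 1$ automatically, reducing the positive-cone case to the convex-combination case treated above. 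The only moving part of the argument is the strict growth of degrees, and that is already built into the inductive construction of the sequence, so there is no real technical obstacle.
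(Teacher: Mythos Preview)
Your proof is correct and uses precisely the same two ingredients as the paper: the strict monotonicity of degrees along the sequence, and the Weyl-invariant linear form $\langle -, F\rangle$ taking the common value $2$ on all $(0)$-Weyl classes. The paper's version is organized slightly differently---it starts from an arbitrary nonnegative combination, uses $\langle -, F\rangle = 2$ to force $\sum a_j = 1$, and then applies the degree argument---whereas you prove the literal convex-hull statement first and only afterward invoke the $F$-pairing to upgrade to the positive-cone statement; but the mathematical content is identical.
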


\begin{proof}
We argue by contradiction: suppose that $c_{k+1} = \sum_{j=1}^k a_j c_j$ for nonnegative real numbers $a_j$.
Since each class $c_j$ is an $(0)$-Weyl class,
we have that $\langle c_j , F \rangle = 2$ for every $j$;
applying this to $c_{k+1}$ and dividing by $2$ gives that
$1 = \sum_{j=1}^k a_j$.
This would then imply that $\deg(c_{k+1}) = \sum_{j=1}^k a_j \deg(c_j)$.
However this is not possible, since the degrees of the classes $c_j$ increase monotonically.
\end{proof}

We remark that the same proof applies for $(1)$-Weyl classes unless $\langle h,F\rangle = 0$,
which only happens if $r=3$.


\subsection{Infinity of rigid curves in $\mathbb{P}^r$}\label{infinite2}

In this Section we discuss the question of infinite number of $(-1)$-curves in the sense of Kontsevich \cite{maxim1}, in $Y^r_s$, where $s \geq r+5$ via
Corollary \ref{infrigid}. 
For $r=3, 4$, the Weyl group with $r+4=7,8$ points is finite,
and so there are only finitely many $(-1)$-Weyl lines.
However we can prove a similar statement for $r+5$ (or more) points.

\begin{lemma}\label{infiniteWeylLemmar4} Assume $r \geq 3$.
Let $L = (d;m_1 \leq m_1 \leq \ldots \leq m_{r+5})$
be a curve class in $A^{r-1}(Y_{r+5}^r)$
with positive degree and non-negative multiplicities.
Assume that
\begin{equation}\label{infinite2inequality}
d > m_3 + m_4 + \sum_{i=7}^{r+5} m_i.
\end{equation}
Then the Cremona image $\phi(L)$
(where $\phi$ is based at the first $r+1$ points, those with the lowest multiplicities)
has the form $\phi(L) = (d’; \{m_i’\})$
(where the $m_i’$ are also placed in increasing order)
and these parameters also satisfy (\ref{infinite2inequality}).
Moreover in this case $d' > d$.
\end{lemma}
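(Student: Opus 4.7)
The plan is to mirror the proof of Lemma \ref{recursionCremona} almost verbatim, adjusted for the fact that now $s=r+5$ and the Cremona is based at the first $r+1$ points. Set $t = d - \sum_{i=1}^{r+1} m_i$, so by Proposition \ref{CremonaAY}(a) the image $\phi(L)$ has degree $d' = d+(r-1)t$ and multiplicity multiset
\[
\{m_1+t,\ldots,m_{r+1}+t\}\cup\{m_{r+2},m_{r+3},m_{r+4},m_{r+5}\}.
\]
The workhorse throughout will be the following comparison principle: if $A, B \subset \{1,\ldots,r+5\}$ have equal size and, when listed in increasing order as $A=\{a_1<\cdots<a_k\}$ and $B=\{b_1<\cdots<b_k\}$, satisfy $a_j\le b_j$ for every $j$, then $\sum_{i\in A}m_i \le \sum_{i\in B}m_i$, since the $m_i$ are nondecreasing.

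First I would verify that $t>0$ (hence $d'>d$). This amounts to showing $\sum_{i=1}^{r+1}m_i < d$, which via the comparison principle applied to $A=\{1,\ldots,r+1\}$ and $B=\{3,4,7,8,\ldots,r+5\}$ (both of size $r+1$, and one checks $a_j\le b_j$ position-by-position) reduces to the hypothesis (\ref{infinite2inequality}). Next I would check $m_{r+5}\le m_1+t$, equivalent to $d\ge \sum_{i=2}^{r+1}m_i + m_{r+5}$; the same comparison trick with $A=\{2,3,\ldots,r+1,r+5\}$ against $B=\{3,4,7,\ldots,r+5\}$ again gives this from (\ref{infinite2inequality}). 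Consequently the reordering of $\phi(L)$ into increasing multiplicities is explicit: $m_1'=m_{r+2}$, $m_2'=m_{r+3}$, $m_3'=m_{r+4}$, $m_4'=m_{r+5}$, and $m_{4+j}'=m_j+t$ for $1\le j\le r+1$.

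The last step is to verify (\ref{infinite2inequality}) for $\phi(L)$, i.e.
\[
d+(r-1)t \;>\; m_{r+4}+m_{r+5}+\sum_{i=7}^{r+5}(m_{i-4}+t).
\]
The sum on the right contains $r-1$ shifted terms, so subtracting $(r-1)t$ from both sides reduces the required inequality to
\[
d \;>\; m_{r+4}+m_{r+5}+\sum_{j=3}^{r+1}m_j.
\]
One last application of the comparison principle, with $A=\{3,4,\ldots,r+1,r+4,r+5\}$ and $B=\{3,4,7,8,\ldots,r+5\}$ (both of size $r+1$, equal in the first two and last two positions and with $a_j\le b_j$ in between), shows the right-hand side above is bounded by the right-hand side of the hypothesis, giving the strict inequality for $\phi(L)$.

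The main obstacle is bookkeeping the index multisets correctly and handling the low-dimensional boundary cases $r=3,4$, where some of the intermediate index ranges degenerate or coincide; the uniform term-by-term ordering argument is designed precisely to absorb these cases without a separate analysis, but one must check the index counts match ($r+1$ terms on each side at each stage) so that the comparison principle applies cleanly.
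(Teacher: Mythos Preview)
Your proof is correct and follows essentially the same approach as the paper. The paper encapsulates your ``comparison principle'' into a single consequence (labeled (\ref{dbiggerms}) there): the hypothesis implies $d > m_{i_1}+\cdots+m_{i_{r+1}}$ whenever $i_1\le 3$, $i_2\le 4$, and $i_j\le j+4$ for $3\le j\le r+1$, and then applies this three times exactly as you do (for $t>0$, for $m_{r+5}\le m_1+t$, and for the inequality on $\phi(L)$).
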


\medskip\noindent{\bf Proof:}
We note that (\ref{infinite2inequality}) implies that
\begin{equation}\label{dbiggerms}
d > m_{i_1}+m_{i_2}+\ldots + m_{i_{r+1}}
\;\;\;\text{if}\;\;\;
i_1 \leq 3, i_2\leq 4, \;\text{and}\; i_j \leq j+4 \;\text{for}\; 3\leq j \leq r+1.
\end{equation}
We again define $t = d-\sum_{i=1}^{r+1}m_i$,
and note that the degree of $\phi(L)$ is then $d’ = d+(r-1)t$
and the multiplicities are 
\[
m_1+t, m_2+t,\ldots,m_{r+1}+t, m_{r+2}, m_{r+3}, m_{r+4}, m_{r+5}
\]
in some order, by Proposition \ref{CremonaAY}(a).

We claim that $m_{r+5} \leq m_1+t$.
This is equivalent to having $d \geq \sum_{i=2}^{r+1}m_i + m_{r+5}$,
which is implied by (\ref{dbiggerms}).

Since this is true, the re-ordering of the multiplicities of $\phi(L)$ to be in increasing order gives
\[
m_1’ = m_{r+2}, m_2’ = m_{r+3}, m_3’ = m_{r+4}, m_4'=m_{r+5},
\text{ and for } 5 \leq i \leq r+5, m_i’ = m_{i-4}+t.
\]

We next check (\ref{infinite2inequality}) for $\phi(L)$: this is
$d' > m_3' + m_4' + m_7' + \cdots + m_{r+5}'$
which is equivalent to
\begin{align*}
d+(r-1)t &>  m_{r+4}+m_{r+5}+(m_3+t)+\cdots+(m_{r+1}+t) \\
&= m_3+\ldots +m_{r+1}+m_{r+4}+m_{r+5} +(r-1)t
\end{align*}
which is equivalent to having $d>m_3+\ldots+m_{r+1}+m_{r+4}+m_{r+5}$.
This follows from (\ref{dbiggerms}) if $r \geq 3$.

Finally $d'>d$ if and only if $t > 0$, which also follows from (\ref{dbiggerms}).

\rightline{QED}

 We will now apply Lemma \ref{infiniteWeylLemmar4} to $(-1)$-Weyl lines,
giving a different perspective on the topic from Corollary \ref{infinite-1curves}:

\begin{corollary}\label{infrigid}
For $r\geq 3$, with $r+5$ or more points in $\bbP^r$,
the orbit of the proper transform of a line through two points is infinite.
Hence there is an infinite number of $(-1)$-Weyl lines and numerical $(-1)$-classes.
\end{corollary}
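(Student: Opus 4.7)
The plan is to exhibit an infinite family of distinct classes in the Weyl orbit of the line through two points, by iteratively applying the Cremona transformation $\phi$ based at the $r+1$ lowest-multiplicity points. Since the Weyl orbit in $A^{r-1}(Y^r_{r+5})$ embeds naturally into $A^{r-1}(Y^r_s)$ for $s\geq r+5$ (by padding with zero multiplicities), I would first reduce to the case $s=r+5$. Beginning with $L_0=(1;0^{r+3},1,1)$ written in nondecreasing order, I iterate $\phi$ to produce $L_0,L_1=\phi(L_0),L_2,\ldots$; by Proposition \ref{CremonaAY} one has $d_{k+1}=d_k+(r-1)t_k$ with $t_k=d_k-\sum_{i=1}^{r+1}(m_i)_k$, so once $t_k>0$ the degrees strictly increase and the $L_k$ are infinitely many distinct $(-1)$-Weyl classes.

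For $r\geq 5$ I would compute the first iterates directly, obtaining $L_1=(r;0^2,1^{r+3})$, $L_2=(2r-1;1^6,2^{r-1})$, and $L_3=(5r-4;2^4,4^6,5^{r-5})$, and then verify that $L_3$ satisfies the strict hypothesis $d>m_3+m_4+\sum_{i=7}^{r+5}m_i$ of Lemma \ref{infiniteWeylLemmar4} (indeed $5r-4>4+16+5(r-5)=5r-5$). Applying Lemma \ref{infiniteWeylLemmar4} from $L_3$ onward then inductively propagates this strict inequality and shows degrees grow without bound. For $r=4$ the Lemma's strict hypothesis is first met after a few more iterations, but the same strategy goes through.

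The main obstacle is the case $r=3$: the strict hypothesis of Lemma \ref{infiniteWeylLemmar4} in fact fails on every element of this orbit (one checks $d$ equals the right-hand side minus one throughout). Here I would instead prove directly by induction that the iterates alternate between the two explicit forms
\[
L_{2j}=(4j+1;\, j^6,(j+1)^2) \quad \text{and} \quad L_{2j+1}=(4j+3;\, j^2,(j+1)^6),
\]
and in both cases $t_k=1$, so the degrees $d_k=2k+1$ strictly increase and the orbit is infinite. Once infinitude of the orbit is established, the corollary follows: each distinct class in the orbit corresponds to a distinct $(-1)$-Weyl line (the iterated Cremona image of the original line), and every $(-1)$-Weyl class is in particular a numerical $(-1)$-class.
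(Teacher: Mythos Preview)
Your proposal is correct and follows essentially the same route as the paper: reduce to $s=r+5$, iterate the Cremona $\phi$ at the $r+1$ smallest multiplicities, invoke Lemma~\ref{infiniteWeylLemmar4} once the strict inequality is reached (after three iterations for $r\geq 5$, a few more for $r=4$), and treat $r=3$ by an explicit closed-form induction since the Lemma's inequality never becomes strict there. Modulo an index shift (your $L_k$ is the paper's $L_{k+1}$) and a cosmetically different but equivalent parametrization of the $r=3$ orbit, this matches the paper's argument.
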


\medskip\noindent{\bf Proof:}
We will use exponents to denote repeated multiplicities.
Using the notation above, we start with the line class $L_1 = (1;0^{r+3},1^2)$ and iterate,
and set $L_{i+1}=\phi(L_i)$.
We cannot immediately apply the Lemma above
because the inequality of the Lemma is not satisfied for $L_1$.
We have $t=1$ so $L_2 = (r;0^2,1^{r+3})$;
this also does not satisfy (\ref{infinite2inequality}) of the Lemma,
so we continue, noting that $t=1$ again.
Hence $L_3=(2r-1;1^2,2^{r-1},1^4) = (2r-1;1^6,2^{r-1})$;
again the inequality fails, and now $t=3$.
Continuing, if $r\geq 5$ we have
$L_4=(5r-4; 4^6, 5^{r-5}, 2^{4}) = (5r-4; 2^4,4^6, 5^{r-5} )$
and this does satisfy the inequality;
hence so does $L_i$ for all $i \geq 4$.
 Since the degrees strictly increase,
they increase without bound and the orbit is infinite.

If $r=4$ then $L_4=(13;1,2,2,2,3,3,3,3,3)$,
$L_5=(22;3,3,3,3,4,5,5,5,6)$, and
$L_6=(40;5,5,5,6,9,9,9,10)$.
The class $L_6$ does satisfy the conditions of the Lemma,
and therefore so does $L_i$ for all $i\geq 6$.
Since the degrees strictly increase,
they increase without bound and the orbit is again infinite.

For $r=3$ the Lemma is not useful; the inequality is never satisfied.
The reader can check though that if we write $i = 2j+e$ with $e\in\{0,1\}$, then
$L_i = (2i-1;(j+e-1)^2,j^4,(j+e)^2)$ 
(for each $L_i$ we have $h=2i-1-2(j+e-1)-2j = 1$)
which gives an infinite set also.

\rightline{QED}

We remark that the infinity of $(-1)$-Weyl lines does not directly follow from the fact that the Weyl group is infinite: $Y^5_9$ is not a MDS and has an infinite Weyl group
but has finitely many (-1)$-$curves,
so there is a subtlety here.

\section{Applications}\label{apl}
A \emph{movable} curve in $Y^r_s$ is a curve that moves in a large enough family so that a general point is contained in at least one member of the family.

In Section \ref{cone} we give applications of the theory of movable curves 
to the faces of the effective cone of divisors, 
while in Section \ref{baselocus} we give applications of the theory of rigid curves 
to the dimensionality problems of linear systems with multiple points in $\mathbb{P}^r$, 
resolutions of singularities and we conclude with some examples of vanishing theorems. 
The most important results of this section are contained in Theorem \ref{nms1} (presented directly below), 
Theorem \ref{effective cone} and Theorem \ref{nms} 
that prove Theorem \ref{extremal rays}. 
Theorem \ref{effective cone}
gives a one-to-one correspondence between faces of this cone
and the collection of $(0)$- and $(1)$-Weyl lines. 
More precisely, a divisor $D\in Pic(Y^r_{r+3})$ is effective 
if and only if $D \cdot C \geq 0$ for every curve $C$ that is a $(0)$- or a $(1)$-Weyl line on $Y$.
In general, if $Y$ is not a Mori Dream Space (arbitrary $s$),
then Theorem \ref{nms} implies that
$(0)$- and $(1)$-Weyl lines define an infinite set of conditions for effectivity of a divisor.

We now present the proof of Conjecture \ref{HardConjecture} for $i =0,1$ which we use in the sequel.

\begin{proposition}\label{ConjectureProofTheorem}
Conjecture \ref{HardConjecture} is true for $i = 0$ and $i=1$:
for all $r$, $s$, every $(i)$-Weyl line is an $(i)$-curve.
\end{proposition}

\medskip\noindent{\bf Proof:}
The proof of (a) is relatively straightforward, after we make the following observation.
First, the statement is true for the actual line through $1$ or $0$ points (the initial case of a $(0)$-Weyl line or a $(1)$-Weyl line): the normal bundle of a line is a direct sum of $\calO(1)$'s, and upon blowing up, the normal bundle is twisted by $\calO(-1)$.

The argument proceeds by induction on the number of standard Cremona transformations required to arrive at the given $(i)$-Weyl line.  When that number is zero, we have the initial case above.  When that number is one, the curve is always a rational normal curve, and has the expected balanced normal bundle too.

If $C$ is an $(i)$-Weyl line that is obtained by applying $k$ standard Cremona transformations to a line, then we may write $C = \phi(C')$ where $C'$ is obtained by applying $k-1$ such transformations to a line, and $\phi$ is a standard Cremona transformation. By induction, $C'$ will be an $(i)$-curve in $Y^r_s$.

The standard Cremona transformation $\phi$ on $Y^r_s$,
as explained in Section \ref{standardCremona},
is factored by systematically blowing up and down the proper transforms of the linear subspaces of codimension at least two spanned by subsets of the $r+1$ initial base points.  Hence if these proper transforms are disjoint from the curve $C'$, the curve $C$ will be isomorphic to $C'$, and will have the same normal bundle, proving that $C$ is also an $(i)$-curve in the transformed $Y^r_s$.  It suffices to show that $C'$ is disjoint from the proper transforms of the codimension two subspaces, since these contain the others.

Let $\psi$ be the composition of standard Cremona transformations 
that take $C'$ back to a line.  
To show that $C'$ is disjoint from 
the finite number of proper transforms 
of the codimension two subspaces $\{L_\alpha\}$ in question, 
it suffices to show that $\psi(C')$ is disjoint from the transforms $\psi(L_\alpha)$,
which are a finite number of codimension two subvarieties in $Y^r_s$.

This is obvious for $i=1$: the line $\psi(C')$ is a general line, and can be chosen to be disjoint from any finite set of codimension two subvarieties.
For $i=0$, the line $\psi(C')$ is a line through one of the $s$ points, but is otherwise general;
in this case a simple dimension count shows that the general member of the $(r-1)$-dimensional family of such lines can be chosen to be disjoint from the finite set of codimension two subvarieties as well.



\rightline{QED}

\begin{corollary}\label{infinite-1curves}
For $r \geq 3$ and $s \geq r+6$, or $r \geq 5$ and $s \geq r+5$,
there are infinitely many $(-1)$-curves in $Y^r_s$.
\end{corollary}

\begin{proof}
We know from the above that there are infinitely many $(0)$-Weyl line classes in these cases,
and using Proposition \ref{ConjectureProofTheorem}, 
we see that these are all $(0)$-curves.
By imposing an additional point of multiplicity one, 
we will create infinitely many $(-1)$-Weyl line classes, 
all of which are $(-1)$-curves.
\end{proof}


The following Theorem \ref{nms1} was first proved by Mukai in \cite{Mu05} 
using the infinity of $(-1)$ Weyl divisors. 
In this section we will prove this result using the theory of movable curves that we introduced in this paper.


\begin{theorem}\label{nms1}
If $F^2=\langle F, F \rangle \leq 0$,
(i.e., $r=2$ and $s\geq 9$; or $r=3, 4$ and $s\geq r+5$; or $r\geq 5$ and $s\geq r+4$)
then $Y^r_s$ is not a Mori Dream Space.
\end{theorem}

\medskip\noindent{\bf Proof:}
If $F^2 \leq 0$, Corollary \ref{infinite01rgeq5} implies that
there are infinitely many classes of $(0)$-Weyl lines.
Each such curve class gives a facet of the cone of effective divisors of $Y^r_s$
(see Theorem \ref{nms}). Effective cone facets given by $(0)$-classes are independent, via Proposition \ref{independentconditions}.
We conclude that $Y$ can not be Mori Dream space
because its effective cone is not rational polyhedral.

\rightline{QED}

Question \ref{nms3} asks whether the extremal rays of the cone of movable curves 
in all Mori Dream spaces $Y^r_{s}$ are $(0)$-Weyl lines and $(1)$-Weyl lines.
\begin{remark}
In $Y^r_{r+3}$, Theorem \ref{class2} reveals that $(0)$-curves are $(0)$-Weyl lines
and in even dimensions Theorem \ref{class3} shows that $(1)$-curves are $(1)$-Weyl lines. 
Remark \ref{rem} shows that 
the $F$-class in $Y^3_6$ contains a $(1)$-curve that is not $(1)$-Weyl line. 
However this $(1)$-curve is not an extremal ray because is the sum of two $(0)$-Weyl lines in Example \ref{line}.
\end{remark}

\subsection{Effective cone of divisors}\label{cone}
In this section we discuss the theory of movable curves
and applications to the cone of effective divisors when $s\leq r+3$.
There is a vast literature studying the geometry of the space $Y^r_{r+3}$
and for a more proper list of citations we will refer to \cite[Section 0]{DP}.
The chamber decomposition of the effective divisorial cone of $Y^r_{r+3}$
is exposed in \cite{AC}.

Also Mukai proves that $Y^4_8$ is isomorphic to
the moduli space $S$ of rank two torsion free sheaves $G$,
with prescribed Chern classes $c_1(G)= - K_Y$ and $c_2(G)=2$.
In \cite{Cas2} the authors use the work of Mukai and Gale duality
that relates spaces $Y^4_8$,$ Y^2_8$, and $S$
in order to study the effective cone of divisors for $Y^4_8$.
In particular, Gale duality gives a correspondence between extremal rays for the effective cone divisors on $Y^4_8$ 
and curves $C$ in $Y^2_8$ for which $C \cdot C=0$ and $C \cdot K_Y= -2$.
These curves are $(0)$-curves on $Y^2_8$
and we prove they correspond to $(0)$-Weyl lines.
The correspondence is more general as Corollary \eqref{div}, Section \ref{section planar}
proves that $(0)$-divisorial classes are equivalent to $(0)$-Weyl hyperplanes on $Y^2_s$. This observations motivate the following considerations.

In this section we will prove that in $Y=Y^r_{r+3}$
the collection of $(0)$- and $(1)$-curves, $\mathcal{C}$,
give all faces of the effective cone of divisors.
In order to see this,
let us review known results about the faces of effective cones.
We recall in these cases the movable cone of divisors
(ie divisor classes that do not contain divisorial base components)
consists of effective divisors
that have positive intersection with all other effective divisors of $Y$
with respect to the Dolgachev-Mukai pairing (\cite[Theorem 4.7]{D}):
if ${\text{Eff}_{\mathbb{R}} Y}^{\wedge}
= \{D \in A^1(Y) \;|\; \langle D, \text{Eff}_{\mathbb{R}} \rangle_1 \geq 0\}$
then
\[
\text{Mov}(Y)=\text{Eff}_{\mathbb{R}} Y \cap {\text{Eff}_{\mathbb{R}} Y}^{\wedge}.
\]

For $Y^r_{r+3}$, the $(-1)$-Weyl hyperplanes
generate the Cox ring \cite[Theorem 1.2]{CT}.

We first introduce coordinates for the divisor $D\in \text{Pic}(Y)$, ie $D:=dH-\sum_{i=1}^{r+3} m_i E_i.$ We recall

\begin{theorem}\cite[Theorem 5.1]{bdp3}

{\textbf Case 1}. If the number of points $s=r+2$,
a divisor $D$ is effective if and only if

$$(A_{i}): d\geq m_i \text{ and } rd \geq \sum_{j=1, j\neq i}^{n+3} m_j$$

{\textbf Case 2}.  If the number of points $s=r+3$,
then a divisor $D$ is effective if and only if
inequalities $(A_{i})$ together with $(B_{n, I(t)})$ hold, where

$$(B_{n, I(t)}): \text{  }  k_{t, I(t)}:=[(t+1)n - t] d - t \sum_{i=1}^{r+3} m_i - \sum_{i\in I} m_i\geq 0$$
 
for every  set $I(t)$ so that $|I(t)|=r-2t+1$ where $-1\leq t \leq l+\alpha$ and $r=2l+\alpha$, $\alpha\in \{0, 1\}$.
\end{theorem}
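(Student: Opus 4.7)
The plan is to reprove this cited result within the movable-curve framework developed in this paper; the original argument in \cite{bdp3} proceeds via Cox ring generators. By the BDPP criterion, since the effective cone of $Y^r_{r+3}$ is closed, a divisor $D$ is effective if and only if $(D\cdot C)\ge 0$ for every movable curve class $C$. By Theorem \ref{extremal rays}, the extremal rays of the movable cone of curves on $Y^r_{r+3}$ are precisely the $(0)$- and $(1)$-Weyl line classes, and the case $s=r+2$ is even simpler since the Weyl group is then finite with finitely many such classes. It therefore suffices to enumerate the $(0)$- and $(1)$-Weyl lines and translate each intersection pairing with $D$ into one of the stated inequalities.

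I would first handle the conditions $(A_i)$, which arise from the low-degree Weyl generators. The inequality $d\ge m_i$ is the pairing of $D$ with the $(0)$-Weyl line $h-e_i$, while $rd\ge\sum_{j\neq i}m_j$ is the pairing with the rational normal curve class $rh-\sum_{j\neq i}e_j$ of degree $r$ through the other points. By Proposition \ref{CremonaAY}(a), this latter class lies in the Weyl orbit of $h$ or $h-e_i$: apply $\phi_I$ with an appropriate $(r+1)$-subset $I$, and it is either a $(0)$- or a $(1)$-Weyl line depending on the chosen orbit. For $s=r+2$ these are essentially all the nontrivial Weyl orbits (the line $h$ itself gives only $d\ge 0$, already implied), yielding Case 1.

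For Case 2 the remaining inequalities $(B_{r,I(t)})$ should arise by iterated application of the Cremona action of Proposition \ref{CremonaAY}(a). Each iteration raises the degree by $(r-1)t$ with $t=d-\sum_{i\in I}m_i$ and shifts the multiplicities indexed by $I$ uniformly by $t$; after $t$ such steps the resulting Weyl line class acquires a leading coefficient $(t+1)r-t$ on $d$ (matching $k_{t,I(t)}$), a uniform shift contributing $-t\sum_{i=1}^{r+3}m_i$, and an asymmetric excess on a distinguished subset $I(t)$ of size $r-2t+1$ contributing $-\sum_{i\in I(t)}m_i$. Whether the resulting class is $(0)$- or $(1)$-type can be read off from $\langle C,F\rangle$ via Definition \ref{def3.5}.

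The main obstacle is combinatorial: one must verify that every $(0)$- or $(1)$-Weyl line in $Y^r_{r+3}$ arises by such an iteration and corresponds to exactly one of the subsets $I(t)$ (up to the symmetric-group action on the $r+3$ points), with no duplications or omissions over the range $-1\le t\le \lfloor r/2\rfloor+\alpha$. A subtler point is that Proposition \ref{icremona} permits irreducible $(1)$-classes of the form $mF+c'$ that are not $(1)$-Weyl lines; one must verify these do not produce additional extremal rays of the movable cone, which follows from Theorem \ref{class3} when $r$ is even, while for $r$ odd one must argue that any such class yields an inequality already implied by the $(0)$-Weyl conditions, so that the list $(A_i)$, $(B_{r,I(t)})$ remains both necessary and sufficient.
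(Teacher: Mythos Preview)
The paper does not prove this theorem at all: it is \emph{recalled} from \cite{bdp3} and then \emph{used} as input to prove Theorem~\ref{effective cone}. Your proposal runs the logic in the opposite direction, and in doing so becomes circular. You invoke Theorem~\ref{extremal rays}(1) to identify the extremal rays of the movable cone, and then pair $D$ against those rays via BDPP. But in this paper Theorem~\ref{extremal rays}(1) is deduced from Theorem~\ref{effective cone}, whose proof explicitly appeals to the cited result \cite[Theorem~5.1]{bdp3} to know what the facets of $\mathrm{Eff}_\mathbb{R}(Y^r_{r+3})$ are. So you are assuming the statement you wish to prove.

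What the paper actually does is the matching you describe in your second and third paragraphs, but with the logical arrow reversed: it takes the inequalities $(A_i)$ and $(B_{n,I(t)})$ as given by \cite{bdp3}, enumerates the $(0)$- and $(1)$-Weyl lines directly (the classes $\alpha_i=h-e_i$, $\alpha'_i=rh-\sum_{j\neq i}e_j$, and $\beta_{t,I(t)}=((t+1)r-t)h-\sum_{i\in I(t)}(t+1)e_i-\sum_{i\notin I(t)}te_i$), and checks that pairing $D$ with each of these reproduces exactly those inequalities. That is how Theorem~\ref{effective cone} and, by duality, Theorem~\ref{extremal rays}(1) are obtained. If you want a self-contained reproof of the cited theorem in this framework, you would need an \emph{independent} argument that the $(0)$- and $(1)$-Weyl lines exhaust the extremal rays of the movable cone (not merely that they are movable, which is Theorem~\ref{nms}); the paper does not supply one.
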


We will recall  that in the Mori Dream Space cases,
the Weyl group and therefore the Weyl orbits
of a general line $h$ and a pencil of lines through one point $h-e_i$ are finite.

Denote the collection of all $(0)$- and $(1)$-curves on the Mori Dream Space $Y$ by
\begin{equation}\label{equation1}
\mathcal{E}:=\{\text{(i)-Weyl line}  \;\text{ for }\; i \in \{0,1\}\} \subset A^{r-1}(Y).
\end{equation}

Introduce $\mathcal{E}_{\geq 0}$ to be the collection of divisors
that intersect every $(0)$-curve and $(1)$-curve positively:
\[
\mathcal{E}_{\geq 0} :=
\{ D \in \text{Pic}(Y)_{\bbR} \text{ so that }  (D \cdot C) \geq 0
\text{ for every } C \in \mathcal{E} \}
\subset \text{Pic}(Y)_{\bbR}.
\]
Consider also the boundary,
\[
\mathcal{E}^{\partial} :=
\{ D\in \mathcal{E}_{\geq 0} \text{ so that }  (D \cdot C)= 0
\text{ for some } C \in \mathcal{E} \}
\subset \text{Pic}(Y)_{\bbR}.
\]

We will now prove the main result of this section.

\begin{theorem}\label{effective cone}
 If $s \leq r+3$ then
 $$\text{Eff}_{\mathbb{R}} Y^r_s = \mathcal{E}_{\geq 0}.$$
In other words, $(0)$- and $(1)$-Weyl lines give the extremal rays
for the cone of movable curves in $\mathbb{P}^r$ with $r+3$ points blown up.
\end{theorem}

\begin{proof}
We will first compute the set $\mathcal{E} \subset A^{r-1}(Y)$ of Equation \eqref{equation1}.

It is easy to see that $(0)$-Weyl lines consists only in a pencil of lines through one point and the rational normal curves of degree $r$ passing through $r+2$ general points.
\[
\alpha_{i}=h-e_i \text{  and  } \alpha'_{i}=nh - \sum_{j\neq i} e_j.
\]
The $(0)$-Weyl lines $\alpha_{i}$ give the facets $(A_{i})$.

We will leave to the reader to check that the $(1)$-curves are of the form
\[
\beta_{t, I(t)} = (t+1)r-t - \sum_{i\in I(t)} (t+1)e_i -  \sum_{i\notin I(t)} t e_i
\]
for every $-1\leq t\leq l+\alpha$ and $|I(t)|=r-2t+1$.
Indeed, ordering the multiplicities in a decreasing order,
one can see that performing a standard Cremona transformation to the last $r+1$ points, then
\[
Cr(\beta_{t, I(t)}) = \beta_{t+1, I(t+1)}.
\]

The $(1)$-Weyl line of minimal degree correspond to $t=-1$,
i.e. the general hyperplane class $\beta_{-1, I(-1)}=h$,
while $(1)$-Weyl line of maximal degree correspond to $t$ a half of $r$
i.e. when
for $\alpha=0$ ($r$ is even) is the quasihomogeneous curve $\beta_{l, I(l)}$,
and if $\alpha=1$, a homogeneous curve $\beta_{l+1, I(l+1)}$.
The $(1)$-Weyl line $\beta_{t, I(t)}$ with $|I(t)|=r-2t+1$ give facets $(B_{r,I(t)})$.
We obtain
\begin{equation}
\begin{split}
 \mathcal{E}&=\{ \{\alpha_i, \alpha'_i\}_{1\leq i\leq r+3}, \text{ and } \{\beta_{t, I(t)}\}_{ -1 \leq t \leq l} \},\\
\mathcal{E}_{\geq 0}&=\{D\in \text{Pic}(Y)_{\bbR} \text{ so that } D \cdot C\geq 0\ \text{for } C\in \mathcal{E}\},\\
&=\{D\in \text{Pic}(Y)_{\bbR} \text{ satisfying } (A_{i})\ \text{ and } (B_{n,I(t)})\},\\
&= \text{Eff}_{\mathbb{R}} Y.
\end{split}
\end{equation}
\end{proof}

\begin{remark}
For the Mori Dream Spaces $Y^3_7$
we can find $4$ types of curves in odd degree up to $7$ as $(1)$-Weyl lines
and $8$ types of curves in odd degree up to $15$ as $(0)$-Weyl lines.
For $Y^4_8$ there are seven types of $(0)$-Weyl lines up to degree $19$.
The Cox ring of $Y^3_7$ is generated by the $(-1)$-Weyl divisors
together with the anticanonical divisor,
and for $Y^4_8$ the effective cone was considered in \cite{Cas2}.
\end{remark}

It is natural to ask the following question:
\begin{question}\label{nms3}
Does the collection of $(0)$- and $(1)$-Weyl lines form all the extremal rays for $Y^3_7$ and $Y^4_8$?
\end{question}

\begin{corollary}
The faces of the effective cone of divisors on $\mathbb{P}^r$ with $r+3$ points are the components of $\mathcal{E}^{\partial}$.
\end{corollary}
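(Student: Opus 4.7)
The plan is to deduce the Corollary directly from Theorem~\ref{effective cone}, which realizes the effective cone as the finite rational polyhedral cone
\[
\mathrm{Eff}_{\mathbb{R}}\,Y^r_{r+3} \;=\; \mathcal{E}_{\geq 0} \;=\; \bigcap_{C\in\mathcal{E}}\{\,D\in \mathrm{Pic}(Y)_{\mathbb{R}} : (D\cdot C)\geq 0\,\}.
\]
Once a cone is presented as a finite intersection of such half-spaces, the face structure is governed by standard polyhedral combinatorics: every face is obtained by turning some subset of the defining inequalities into equalities, and the relative boundary of the cone (in its linear span) is the union of its proper faces. So I would run the proof in three short steps.

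First I would identify $\mathcal{E}^{\partial}$ as defined in the paper with this relative boundary. By Theorem~\ref{effective cone} a divisor $D\in \mathrm{Eff}_{\mathbb{R}}\,Y$ lies on the relative boundary of the cone if and only if at least one of the defining inequalities $(D\cdot C)\geq 0$ is tight, i.e.\ if and only if there exists $C\in\mathcal{E}$ with $(D\cdot C)=0$; but that is precisely the definition of $\mathcal{E}^{\partial}$. So $\mathcal{E}^{\partial}$ is the union of the proper faces of the effective cone.

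Next I would match facets with individual elements of $\mathcal{E}$. The explicit calculation inside the proof of Theorem~\ref{effective cone} lists the $(0)$-Weyl lines $\alpha_i,\alpha'_i$ and the $(1)$-Weyl lines $\beta_{t,I(t)}$ and shows that the corresponding inequalities $(D\cdot C)\geq 0$ recover \emph{exactly} the inequalities $(A_i)$ and $(B_{n,I(t)})$ of \cite[Theorem 5.1]{bdp3}. Those inequalities are already known to be irredundant (each defines an actual facet of the effective cone of $Y^r_{r+3}$), so for every $C\in\mathcal{E}$ the hyperplane $C^{\perp}$ meets $\mathcal{E}_{\geq 0}$ in a codimension-one face, and conversely every facet arises this way. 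Higher codimension faces are then obtained by intersecting facets, so the decomposition of $\mathcal{E}^{\partial}$ into its irreducible pieces is precisely the facet decomposition of $\mathrm{Eff}_{\mathbb{R}}\,Y$; each irreducible component is labelled by a unique $(0)$- or $(1)$-Weyl line.

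The only substantive point beyond convex geometry is the irredundancy of the inequalities, and this is where I would expect the main obstacle to hide. It is however already built into the proof of Theorem~\ref{effective cone}: the bijection with the families $(A_i)$ and $(B_{n,I(t)})$, each of which is realised by an effective divisor achieving equality on exactly one class $C$ and strict inequality on the others, shows that every $C\in\mathcal{E}$ carves out a genuine facet of the cone. Granting this, the statement of the Corollary is an immediate translation between the polyhedral description $\mathrm{Eff}_{\mathbb{R}}\,Y=\mathcal{E}_{\geq 0}$ and the combinatorial structure of its boundary.
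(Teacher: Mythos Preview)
The paper states this Corollary without proof, treating it as an immediate consequence of Theorem~\ref{effective cone}; your expansion via elementary polyhedral geometry is exactly the intended argument, and it is correct. The one substantive point you flag---irredundancy of the inequalities---is implicitly asserted in the paper's own proof of Theorem~\ref{effective cone}, where the conditions $(A_i)$ and $(B_{n,I(t)})$ coming from the $(0)$- and $(1)$-Weyl lines are already called ``facets,'' so your appeal to \cite{bdp3} for this is consistent with the paper.
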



If $Y$ is not a Mori Dream Space,
we define the infinite collection of curves
\begin{equation}
\begin{split}
\mathcal{I}:&=\{\text{(0)-Weyl lines and (1)-Weyl lines} \} \subset A^{r-1}(Y)\\
\mathcal{I}_{\geq 0}:&=\{D\in \text{Pic}(Y)_{\bbR} \text{ so that } D \cdot C\geq 0\ \text{for } C\in \mathcal{I}\} \subset \text{Pic(Y)}.
\end{split}
\end{equation}
i.e., $\mathcal{I}_{\geq 0}$ consists of
all divisors that intersect every curve of $\mathcal{I}$ positively.
The next theorem gives an infinite set of necessary conditions for the effectivity of a divisor on $Y^r_s$.

\begin{theorem}\label{nms} Let $Y=Y^r_{s}$; then
 $ \text{Eff}_{\mathbb{R}} Y \subset \mathcal{I}_{\geq 0}.$
\end{theorem}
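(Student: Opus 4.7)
The strategy is to exhibit every element of $\mathcal{I}$ as the class of a \emph{movable} curve, in the sense that it moves in an algebraic family whose union dominates $Y$, and then to invoke the BDPP criterion recalled in the introduction: a line bundle $L$ is pseudo-effective if and only if $L \cdot C \geq 0$ for every irreducible curve $C$ in such a covering family. Since every effective $\mathbb{R}$-divisor is pseudo-effective, this will immediately give the desired containment $\text{Eff}_{\mathbb{R}} Y \subset \mathcal{I}_{\geq 0}$.

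First I would verify the covering property for the two seed classes of $\mathcal{I}$. The general line in $\bbP^r$ moves in a $(2r-2)$-dimensional family parametrized by $G(1,r)$; a general member avoids the $s$ base points, and its proper transforms sweep out a dense open subset of $Y$, so the seed class $h$ (representing $(1)$-Weyl lines) is realized by a covering family. Similarly, the pencil of lines through a fixed point $p_i$ is $(r-1)$-dimensional and its union is all of $\bbP^r$; the proper transforms cover $Y$ and represent the class $h-e_i$, the seed for the $(0)$-Weyl lines. Hence the two origin classes of $\mathcal{I}$ are movable.

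Next I would propagate the movability along the Weyl orbit. Permutations of the indices act trivially on the covering property, and each standard Cremona transformation $\phi_I$ is a birational self-map of $Y^r_s$ whose indeterminacy locus has codimension at least two in both source and target. The proper transform under $\phi_I$ of a family of irreducible curves dominating $Y$ is then again an irreducible family dominating $Y$, whose class transforms according to Proposition \ref{CremonaAY}(a). By induction on the length of a word in the Weyl generators, every class in $\mathcal{I}$ is represented by a movable curve.

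The final step is then formal: given such a movable representative $C$ for each class in $\mathcal{I}$, the BDPP theorem yields $D \cdot C \geq 0$ for every pseudo-effective divisor $D$, and in particular for every effective divisor, proving the inclusion. The one point requiring care is the verification that the covering-family property is preserved under the sequence of blowups and blowdowns of positive-dimensional linear subspaces which realize $\phi_I$; this however reduces to the general fact that the strict transform of a dominating family of curves under a birational map $Y \dashrightarrow Y$ whose indeterminacy locus has codimension at least two remains a dominating family, since a general member of the original family meets the open locus where $\phi_I$ is an isomorphism.
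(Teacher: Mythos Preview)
Your proof is correct and follows essentially the same approach as the paper: both arguments rest on the fact that $(0)$- and $(1)$-Weyl lines move in covering families of $Y$, which you establish carefully by checking the seed classes $h$ and $h-e_i$ and then propagating along the Weyl orbit via the birational Cremona maps. The only cosmetic difference is in the final step: the paper concludes with the elementary Bezout-type observation that a curve through a general point $q\notin D$ cannot meet an effective $D$ negatively, whereas you invoke the BDPP characterization of the pseudo-effective cone---a slightly heavier tool, but one the paper itself cites as motivation, and which yields the same conclusion.
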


\begin{proof}
We use that if $C$ is an $(i)$-Weyl line ($i=0,1)$, $q$ is a general point in $Y$
then there is another $(i)$-Weyl line through $q$ with the same class: these $(i)$-curves move in an appropriately large family.

Assume by contradiction that an $(i)$-Weyl line has negative intersection with a divisor $D$;
then choosing a point $q$ not in $D$
we find an $(i)$-Weyl line with the same class (and hence also negatively meeting $D$)
through $q$, which is a contradiction.
\end{proof}

\subsection{Base locus of Effective Divisors.}\label{baselocus}

We recall that the Picard group of $Y$, $\text{Pic}(Y)=<H, E_1,\cdots, E_s>$
where $H$ is the general hyperplane class and $E_i$ are the exceptional divisors.
We recall Definition \ref{def}:
a $(-1)-$ Weyl hyperplane is the Weyl orbit of the exceptional divisor.

Let $G$ be a  $(-1)$Weyl hyperplane;
then there exist a Weyl group element $w$ with $G=w(E_1)$. 
We recall that $\langle -, -\rangle_1$
is a bilinear form on the Picard group of $Y$ invariant under the Weyl group action.
In particular,
\[
\langle  G, G\rangle_1 = \langle w(E_1), w(E_1) \rangle_1= \langle E_1, E_1\rangle_1= - 1.
\]

\begin{theorem}\label{base locus 1}
If a $(-1)$-Weyl line $C$ and a $(-1)$-Weyl hyperplane $G$
are part of the base locus the linear system $|D|$ of an effective divisor $D$,
then
$$(C \cdot G)=0.$$
\end{theorem}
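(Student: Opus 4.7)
Plan.  Since $G$ is an irreducible $(-1)$-Weyl hyperplane lying in the base locus of $|D|$, it must be a fixed divisorial component; the plan starts by writing $D = aG + D_1$ with $a \geq 1$ the maximal multiplicity of $G$ in the fixed part of $|D|$ and $D_1$ effective, with $G$ no longer a fixed component of $|D_1|$.  I read ``part of the base locus'' as ``irreducible component of the base scheme distinct from $G$'', so that $C \not\subset G$, from which $(C \cdot G) \geq 0$ follows by proper intersection of irreducible subvarieties.  The task therefore reduces to showing $(C \cdot G) \leq 0$.

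The next step is to observe that $C$ also lies in the base locus of $|D_1|$.  Using the factorization $H^0(\mathcal{O}_Y(D)) = s_G^a \cdot H^0(\mathcal{O}_Y(D_1))$, where $s_G$ is the canonical section cutting out $G$, every $s_1 \in H^0(\mathcal{O}_Y(D_1))$ must vanish on $C$: otherwise $s_G^a\, s_1$ would be a section of $\mathcal{O}_Y(D)$ not vanishing on $C$, since $s_G$ does not vanish on $C$ when $C \not\subset G$.  On the other hand, since $G$ is not a fixed component of $|D_1|$, there exists $s^* \in H^0(\mathcal{O}_Y(D_1))$ whose restriction $s^*|_G$ is a nontrivial section of $\mathcal{O}_Y(D_1)|_G$, and this restricted section necessarily vanishes at the $(C \cdot G)$ points of $C \cap G$.

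The hard part will be converting this information into the strict bound $(C \cdot G) \leq 0$.  My plan is to exploit the Weyl-invariance of the intersection pairing: choose a Weyl element $w$ with $w(G) = E_1$, giving
\[
(C \cdot G) \;=\; (w(C) \cdot E_1) \;=\; m_1(w(C)),
\]
the multiplicity of the transformed $(-1)$-Weyl class at the blown-up point $p_1$.  Theorem \ref{class} restricts the irreducible $(-1)$-Weyl line $w(C)$ to being either a line through two of the $p_i$ or the rational normal curve of degree $r$ through $r+3$ of them, so $m_1(w(C)) \in \{0,1\}$.  To rule out $m_1(w(C)) = 1$, the plan is to use the Cox ring generation by $(-1)$-Weyl hyperplanes to decompose $D_1$ as $\sum_j b_j G_j$ with $b_j \geq 0$ and each $G_j$ a $(-1)$-Weyl hyperplane, and then exhibit a generator $G_j$ transverse to $C$ at the intersection point $C \cap G$; the associated generator of the Cox ring would supply a section of $\mathcal{O}_Y(D_1)$ nonvanishing on $C$ in a neighborhood of that point, contradicting the conclusion of the second step.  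This final Weyl-orbit case analysis, tying the intersection number $(C \cdot G)$ to the component structure of $D_1$ via the Cox ring, is the delicate combinatorial core of the argument.
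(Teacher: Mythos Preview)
Your proposal shares the opening move with the paper's proof --- applying a Weyl element $w$ sending $G$ to $E_1$ so that $(C\cdot G)$ becomes the $E_1$-multiplicity of $w(C)$ --- but the argument you build on top of that has two genuine gaps.

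\textbf{First gap: the appeal to Theorem~\ref{class}.} You invoke Theorem~\ref{class} to conclude that the $(-1)$-Weyl line $w(C)$ is either a line through two of the $p_i$ or a rational normal curve through $r+3$ of them, and hence that $m_1(w(C))\in\{0,1\}$. But Theorem~\ref{class} applies only when $Y$ is a Mori Dream Space or $Y=Y^5_9$; Theorem~\ref{base locus 1} carries no such hypothesis. Outside those cases the orbit of $h-e_1-e_2$ is infinite (see Section~\ref{infinite2}) and contains classes with arbitrarily large multiplicities, so the bound $m_1(w(C))\leq 1$ simply fails.

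\textbf{Second gap: the Cox-ring step.} Your plan to rule out $m_1(w(C))=1$ rests on decomposing $D_1$ as $\sum_j b_j G_j$ with each $G_j$ a $(-1)$-Weyl hyperplane. Generation of the Cox ring by such classes is again a MDS-specific fact (indeed, specific to $Y^r_{r+3}$ via \cite{CT}). Moreover, even granting it, a single generator $G_j$ lives in degree $[G_j]$, not $[D_1]$, so it does not directly furnish a section of $\mathcal{O}_Y(D_1)$ nonvanishing on $C$; the passage from ``some $G_j$ transverse to $C$'' to ``some section of $|D_1|$ nonvanishing on $C$'' is not explained and does not follow in any obvious way.

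\textbf{What the paper does instead.} After writing $G=\sigma(E_1)$, the paper does not attempt to bound $m_1(\sigma^{-1}(C))$. Rather, it peels off $G$ with its full multiplicity $p=-\langle D,G\rangle_1$, observes that $\sigma^{-1}(D-pG)$ is an effective divisor with zero multiplicity at $p_1$, and then computes
\[
(\sigma^{-1}(D-pG)\cdot \sigma^{-1}(C)) \;=\; (D\cdot C) - p\,(G\cdot C) \;<\; 0,
\]
forcing $\sigma^{-1}(C)$ into the base locus of $|\sigma^{-1}(D-pG)|$. But $\sigma^{-1}(C)$ passes through $p_1$ while $\sigma^{-1}(D-pG)$ imposes no condition there, yielding the contradiction. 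This argument uses only Weyl-invariance of the pairing and the result of \cite{DP} identifying $p$; it needs neither the classification of $(-1)$-Weyl lines nor any Cox-ring input, and hence works without the MDS hypothesis.
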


\begin{proof}
Assume by contradiction that there exist an effective divisor $D$
containing in the base locus the Weyl hyperplane $G$ and the $(-1)$ Weyl curve $C$,
with $(G \cdot C) \geq 1$.
Let $G=\sigma(E_1)$ for $\sigma \in \text{Weyl}(Y)$,
and note that $\langle G, G \rangle_1=-1$.
By \cite[Lemma 7.1]{DP} we have for some $p>0$,
\[
-p= \langle D, G\rangle_1 = \langle D, \sigma(E_1)\rangle_1 = \langle \sigma^{-1}(D), E_1 \rangle_1.
\]
The bilinearity this implies that
$$\langle D-pG, G \rangle_1 = \langle D, G\rangle_1 -p \langle G, G\rangle_1=0.$$

We know that $D-pG$ is an effective divisor,
and therefore $\sigma^{-1}(D-pG)$ is also an effective divisor. 
Moreover,
$$ \langle \sigma^{-1}(D - p G), E_1\rangle_1 = \langle D - pG, \sigma(E_1)\rangle_1 =\langle  D - p G, G\rangle_1=0.
$$
Therefore the divisor $\sigma^{-1}(D - p G)$ is based at at most $s-1$ points,
missing the point $E_1$.

We assume that 
$$
1\leq (D \cdot C) = (\sigma(E_1) \cdot C) = (E_1 \cdot \sigma^{-1}(C)).
$$
The positive intersection $(E_1 \cdot \sigma^{-1}(C))$
implies that $\sigma^{-1}(C)$ is an effective curve
(i.e. not contracted by the Weyl group element $\sigma$).
Therefore $\sigma^{-1}(C)$ is a $(-1)$ Weyl line that
passes through the point $E_1$.
Moreover, since $p>0$ we have
$$
(\sigma^{-1}(D - p G) \cdot \sigma^{-1}(C)) = ((D - p G) \cdot C)
= (D\cdot C) - p (G \cdot C) <0.
$$
We conclude that $\sigma^{-1}(C)$
is the base locus of the effective divisor $\sigma^{-1}(D - p G)$.
This is a contradiction since $\sigma^{-1}(C)$ passes through the point $E_1$
while the divisor $\sigma^{-1}(D - p G)$ doesn't pass through the point $E_1$.
Therefore $\sigma^{-1}(C)$ is a family of curves 
sweeping out the effective divisor $\sigma^{-1}(D - p G)$ and this is a contradiction.
\end{proof}

\begin{theorem}\label{weyl div}
Let  $G_1$ and $G_2$ be two $(-1)$-Weyl hyperplanes in the base locus of an effective divisor $D$. Then
$$\langle  G_1, G_2\rangle_1 =0$$
\end{theorem}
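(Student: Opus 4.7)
The plan is to mirror the strategy of the proof of Theorem \ref{base locus 1}. After a Weyl conjugation we may assume $G_1 = E_1$, and since $G_1 \neq G_2$ we have $G_2 \neq E_1$. Decompose the effective divisor $D = p_1 E_1 + p_2 G_2 + R$, where $p_1, p_2 \geq 1$ are the base-locus multiplicities of $E_1$ and $G_2$ in $D$, and $R$ is effective, having neither $E_1$ nor $G_2$ as an irreducible component. Applying \cite[Lemma 7.1]{DP} to the Weyl hyperplane $E_1 \subset \mathrm{Bs}|D|$ yields $\langle D, E_1\rangle_1 = -p_1$. Expanding by bilinearity and using $\langle E_1, E_1\rangle_1 = -1$ produces the key identity
\[
p_2 \langle E_1, G_2\rangle_1 + \langle R, E_1\rangle_1 \;=\; 0.
\]

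It remains to check that each summand is non-negative. A direct computation in the basis $\{H, E_1, \ldots, E_s\}$ shows that for any class $\alpha = dH - \sum_i m_i E_i$ one has $\langle E_1, \alpha\rangle_1 = m_1$. By Corollary \ref{div}, $G_2$ is an irreducible effective $(-1)$-Weyl hyperplane; since $G_2 \neq E_1$, its class is either $E_j$ for some $j \neq 1$ (in which case $m_1 = 0$) or of the form $dH - \sum_i m_i E_i$ with $d \geq 1$ and all $m_i \geq 0$. In either case $\langle E_1, G_2\rangle_1 \geq 0$. Likewise $R$ is a non-negative combination of irreducible effective divisors distinct from $E_1$: each such irreducible summand is either another exceptional divisor $E_j$ (contributing $0$ to the $E_1$-coefficient) or a proper transform of a divisor in $\mathbb{P}^r$ (contributing a non-negative amount), so writing $R = c'H - \sum_i n_i E_i$ we obtain $n_1 \geq 0$, i.e.\ $\langle R, E_1\rangle_1 \geq 0$.

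Combining the identity with these two non-negativity statements forces both summands to vanish; since $p_2 \geq 1$ this yields $\langle G_1, G_2\rangle_1 = \langle E_1, G_2\rangle_1 = 0$, as desired. The main obstacle is the initial Weyl reduction: the conjugation by $\sigma \in W$ with $\sigma(E_1) = G_1$ must transport the decomposition of $D$ faithfully, so that $\sigma^{-1}(G_2)$ remains a $(-1)$-Weyl hyperplane in the base locus of $\sigma^{-1}(D)$ with the appropriate multiplicity, and $\sigma^{-1}(R)$ remains an effective divisor with no $E_1$-component. This bookkeeping is the same one tacitly used in the proof of Theorem \ref{base locus 1} and rests on Corollary \ref{div}, which identifies the $(-1)$-Weyl hyperplanes as precisely the irreducible effective divisors with numerical $(-1)$-divisorial class.
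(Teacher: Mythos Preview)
Your proof is correct and follows essentially the same idea as the paper's: both reduce via a Weyl element $\sigma$ to $G_1=E_1$, use that the residual $D-p_1E_1$ has $\langle\,\cdot\,,E_1\rangle_1=0$, and observe that any irreducible effective divisor other than $E_1$ (in particular $G_2$) has non-negative pairing with $E_1$. The only difference is packaging: the paper argues by contradiction (assuming $\langle G_1,G_2\rangle_1\geq 1$, it invokes \cite[Lemma~7.1]{DP} a second time to force $\sigma^{-1}(G_2)$ into the base locus of $\sigma^{-1}(D-pG_1)$, contradicting that the latter misses $p_1$), whereas you decompose $D=p_1E_1+p_2G_2+R$ up front and conclude directly from $p_2\langle E_1,G_2\rangle_1+\langle R,E_1\rangle_1=0$ with both summands non-negative. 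Your version is slightly more economical since it avoids the second appeal to \cite[Lemma~7.1]{DP}; one small point worth making explicit is that for $G_1=E_1$ the equality $\langle D,E_1\rangle_1=-p_1$ (not just an inequality) follows immediately from $\langle D,E_1\rangle_1=m_1$ and the elementary fact that $E_1$ occurs in the fixed part of $|dH-\sum m_iE_i|$ with multiplicity exactly $\max(0,-m_1)$.
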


\begin{proof}
By \cite[Lemma 7.1]{DP} we know

$$\langle  D, G_1\rangle_1 <0$$
$$\langle D, G_2\rangle_1 <0$$
and we claim that $G_1$ and $G_2$ are orthogonal with respect to the Dolgachev Mukai pairing.

Let $\sigma$ an element of the Weyl group so that $G_1=\sigma(E_1)$.
Then for some $p>0$ (the multiplicity of containtment of $G_1$ in $D$)
\[
-p= \langle D, G_1\rangle_1 = \langle D, \sigma(E_1)\rangle_1 =\langle \sigma^{-1}(D), E_1\rangle_1.
\]
Therefore,
$$
\langle D-pG_1, G_1\rangle_1 =\langle D, G_1\rangle_1 -p \langle G_1, G_1\rangle_1 =0.
$$
 Moreover,
$$
\langle \sigma^{-1}(D - p G_1), E_1\rangle_1 = \langle  D - pG_1, \sigma(E_1)\rangle_1 = \langle D - p G_1, E_1\rangle_1=0.
$$
Therefore the effective divisor $\sigma^{-1}(D - p G_1)$ is based at at most $s-1$ points,
missing the point $E_1$.
Assume by contradiction that 
$$
1\leq  \langle G_1, G_2\rangle_1 = \langle \sigma(E_1), G_2\rangle_1 =\langle E_1, \sigma^{-1}(G_2)\rangle_1.
$$
Since $p>0$ we obtain
$$ \langle  \sigma^{-1}(D - p G_1), \sigma^{-1}(G_2)\rangle_1 = \langle D - pG_1, G_2\rangle_1 =\langle D, G_2\rangle_1 - p\langle G_1, G_2\rangle_1 <0.
$$

By \cite[Lemma 7.1]{DP},
we have that the $(-1)$ Weyl divisor $\sigma^{-1}(G_2)$
splits off the divisor $\sigma^{-1}(D-pG_1)$.
This gives a contradiction since the divisor $\sigma^{-1}(G_2)$
passes through the point $E_1$ 
and the divisor $\sigma^{-1}(D-pG_1)$ does not pass through the point $E_1$.

\end{proof}

Let $D$ be an effective divisor on $Y$.
Necessary conditions for effectivity are given in Section \eqref{cone}.
We remark that Theorem \eqref{weyl div}
does not hold for curves contained in an effective divisor $D$,
with respect to the bilinear form $\langle -,-\rangle$.

\begin{example}\label{examp}
Consider the effective divisor $D:=6H -\sum_{i=1}^9 4E_i$ in $\mathbb{P}^5$.
We can see that two types of $(-1)$ Weyl lines are contained in $D$:
one is $C$ (the rational normal curve of degree $5$ passing through first $8$ points)
and the other is $L_{19}$ (the line through points $1$ and $9$)
are contained in the base locus of $D$,
however $\langle C, L_{19} \rangle_1=5-4=1>0.$
\end{example}

Moreover, \cite[Corollary 8.3]{DM1} shows that
if two Weyl surfaces (ie the Weyl orbit of a plane through three fixed points)
can not be fixed part of a divisor in $\mathbb{P}^4$ based at $8$ points then,
their intersection in the Chow ring is zero.

This makes us predict that for an effective divisor $D$ on $Y$,\
it has a resolution of singularities by blowing up its base locus for $D$,
with the blown up space $\tilde{Y}$ being smooth.
In the next subsection we show that $(-1)$
curves create a contribution to the dimension of the linear system of an effective divisor $D$.

\end{document}